\documentclass[a4paper,11 pt]{amsart}

\RequirePackage{amsmath, amssymb, amsthm, amsfonts}
\usepackage[all]{xy}
\usepackage{enumitem}
\usepackage{hyperref}
\usepackage{fullpage}
\usepackage{hyperref}
\usepackage[mathscr]{eucal}
\usepackage[active]{srcltx}
\newif\ifpdf
\ifx\pdfoutput\undefined \pdffalse \else \ifnum\pdfoutput=0
\pdffalse \else \pdfoutput=1 \pdftrue \fi \fi

         \ifpdf
         \usepackage[pdftex]{graphicx}
         \else
         \usepackage{graphicx}
         \fi
\newtheorem{theo}{Theorem}[section]
\newtheorem{lem}[theo]{Lemma}
\newtheorem{prop}[theo]{Proposition}
\newtheorem{cor}[theo]{Corollary}
\newtheorem{defin}[theo]{Definition}
\newtheorem{notation}[theo]{Notation}
\theoremstyle{definition}

\newtheorem{rem}[theo]{Remark}

%
%






\newcommand{\Aut}{{\mathrm{Aut}}}
\newcommand{\Out}{{\mathrm{Out}}}
\newcommand{\Inn}{{\mathrm{Inn}}}

\newcommand{\Diff}{{\mathrm{Diff}}}
\newcommand{\Map}{{\mathrm{Map}}}





\newcommand{\ZZ}{\mathbb{Z}}


\newcommand{\lr}{\longrightarrow}


\title{Surfaces isogenous to a product of curves, braid groups and mapping class groups}
\author{Matteo Penegini}
\address{Matteo Penegini\\
Dipartimento di Matematica ``F. Enriques" \\ Via Saldini 50, I-20133
Milano, Italy} \email{matteo.penegini@unimi.it}

\setcounter{section}{-1}


\begin{document}


\maketitle

\tableofcontents


\section{Introduction}\label{sec.intro}

This article is a revised version of the talk I gave at the
conference ``\textit{Beauville Surfaces and groups}'' held in
Newcastle in June 2012. It presents some group theoretical
methods to give bounds on the number of connected components of
the moduli space of surfaces of general type, focusing on some
families of regular surfaces isogenous to a product of curves.
Some of the results appearing in this work have been proven
in collaboration with Shelly Garion.

We will use the standard notation from the theory of complex
algebraic surfaces. Let $S$ be a smooth, complex, projective,
\emph{minimal} surface $S$ of \emph{general type}; this means that
the canonical divisor $K_S$ of $S$ is \emph{big} and
\emph{nef}.



The principal numerical invariants for the study of minimal
surfaces of general type are

\begin{itemize}
    \item the \emph{geometric genus} $p_g(S):=h^0(S, \,
\Omega^2_S)=h^0(S, \, \mathcal{O}_S(K_S))$,
    \item the
\emph{irregularity} $q(S):=h^0(S, \, \Omega^1_S)$, and
    \item the \emph{self
intersection of the canonical divisor} $K^2_S$.
\end{itemize}

In fact, these determine all the other classical
invariants, as
\begin{itemize}
   \item the
\emph{Euler-Poincar\'e characteristic}
$\chi(\mathcal{O}_S)=1-q(S)+p_g(S)$,
    \item the \emph{topological Euler number}
    $e(S)=12\chi(S)-K^2_S$, and
    \item the \emph{plurigenera} $P_n(S)=\chi(S)+(^n_2)K^2_S$.
\end{itemize}

Moreover, we call a surface \emph{regular} if its irregularity vanishes, i.e., $q(S)=0$.

By a theorem of Bombieri, a minimal surface of general type $S$
with fixed invariants is birationally mapped to a normal surface
$X$ in a fixed projective space of dimension $P_5(S)-1$. Moreover,
$X$ is uniquely determined and is called the \emph{canonical
model} of $S$. Let us recall Gieseker's Theorem.

\begin{theo}
 There exists a quasi-projective coarse moduli space
$\mathcal{M}_{y, x}$ for canonical models of surfaces of
general type $S$ with fixed invariants $y:=K^2_S$ and $x:=\chi$.
\end{theo}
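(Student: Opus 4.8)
The plan is to rigidify the moduli problem by pluricanonically embedding every canonical model into one fixed projective space, and then to form a quotient by the projective linear group. Fix an integer $n \ge 5$. By Bombieri's theorem recalled above, the $n$-canonical map realizes the canonical model $X$ of a minimal surface of general type $S$ with $K_S^2 = y$ and $\chi(\oo_S) = x$ as a normal surface with at worst rational double points inside $\PP^N$, where $N + 1 = P_n(S) = x + \binom{n}{2}y$ depends only on the pair $(y,x)$; since rational double points are Gorenstein, $\omega_X$ is an ample line bundle with $\oo_X(1) \cong \omega_X^{\otimes n}$, so the Hilbert polynomial $t \mapsto \chi(\oo_X(t)) = x + \binom{nt}{2}y$ is likewise determined by $(y,x)$. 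First, then, I would invoke boundedness: all such $X$ occur as points of a single Hilbert scheme $\mathrm{Hilb}^P(\PP^N)$, which is projective and of finite type.

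Next I would cut out the locally closed subscheme $H \subset \mathrm{Hilb}^P(\PP^N)$ parametrizing the subschemes that genuinely are $n$-canonically embedded canonical models: one requires that $X$ be geometrically integral and normal with only rational double points, that $\omega_X$ be an ample line bundle, that $\oo_X(1) \cong \omega_X^{\otimes n}$, and that $H^0(\PP^N, \oo(1)) \to H^0(X, \omega_X^{\otimes n})$ be an isomorphism. The first requirements are open conditions in flat families (openness of the normal, Cohen--Macaulay and rational-double-point loci on the fibres) and the remaining comparisons of sheaves and cohomology are then closed, so $H$ is locally closed, hence quasi-projective. The group $\PGL_{N+1}$ acts on $H$, two points lie in the same orbit exactly when the corresponding canonical models are isomorphic, and by uniqueness of the canonical model every minimal surface of general type with invariants $(y,x)$ is represented; so $\goduli{y,x}$ must be realized as the quotient $H/\PGL_{N+1}$, and it remains to construct this quotient as a quasi-projective scheme corepresenting the moduli functor.

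The hard part is precisely the construction of that quotient. Two facts are needed. The first, which is classical, is that the automorphism group of a surface of general type is finite (it acts faithfully on the canonical ring while $H^0(S, T_S) = 0$ forbids positive-dimensional automorphism groups), so the $\PGL_{N+1}$-action on $H$ has finite stabilizers. The second, which is Gieseker's technical centerpiece, is a \emph{geometric invariant theory} statement: for $n$ large enough the Hilbert point of an $n$-canonically embedded canonical model is GIT-stable for the natural $\SL_{N+1}$-linearization. Verifying this stability is the main obstacle: via the Hilbert--Mumford numerical criterion it amounts to a delicate analysis of how one-parameter subgroups degenerate a pluricanonical surface, in which the ampleness of $K_X$ together with tight control of the singularities is used to exclude every destabilizing limit. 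Granting the stability theorem, Mumford's GIT yields a geometric quotient $H/\PGL_{N+1}$ that is quasi-projective; a final check that its closed points are exactly the isomorphism classes of canonical models with invariants $(y,x)$ and that it corepresents the moduli functor then produces the desired space $\goduli{y,x}$.
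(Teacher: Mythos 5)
The paper does not prove this statement: it is recalled as Gieseker's Theorem with no argument given, so there is no internal proof to compare against. Your plan is a faithful outline of Gieseker's actual strategy (pluricanonical embedding via Bombieri, boundedness through a single Hilbert scheme $\mathrm{Hilb}^P(\PP^N)$ with $N+1 = x + \binom{n}{2}y$, the locally closed locus $H$ of $n$-canonically embedded canonical models, and the GIT quotient by $\PGL_{N+1}$), and you correctly identify the genuine technical core, namely the asymptotic stability of the Hilbert points of pluricanonically embedded canonical models. Two small caveats: finiteness of stabilizers is necessary but nowhere near sufficient for the existence of a quasi-projective geometric quotient, so the stability theorem cannot be bypassed (you acknowledge this); and the condition $\oo_X(1) \cong \omega_X^{\otimes n}$ cutting out $H$ is most cleanly handled as a locally closed condition via the relative Picard scheme rather than as "closed after open", though this does not affect the outcome. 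As a proof plan at this level of granularity, it is correct and is the standard route to the theorem.
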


The union $\mathcal{M}$ over all admissible pairs of invariants ($y,x$) of
these spaces is called the \emph{moduli space of surfaces of
general type}.
If $S$ is a smooth minimal surface of general type, we denote by
$\mathcal{M}(S)$ the subvariety of $\mathcal{M}_{y,x}$,
corresponding to surfaces (orientedly) homeomorphic to $S$.
Moreover, we denote by $\mathcal{M}^0_{y,x}$ the subspace of
the moduli space corresponding to regular surfaces. 

It is known that
the number of connected components $\delta(y, x)$ of
$\mathcal{M}^0_{y,x}$ is bounded from above by a function of $y$; more precisely by \cite{Cat92} we have
$\delta(y, x) \leq cy^{77y^2}$, where $c$ is a positive constant. Hence we have that the number of
components has an exponential upper bound in $K^2$.

There are also some results regarding the lower bound. In
\cite{Man}, for example, Manetti constructed a sequence $S_n$ of
simply connected surfaces of general type, such that the lower
bound for the number of the connected components  $\delta(S_n)$ of
$\mathcal{M}(S_n)$ is given by
\[
\delta(S_n) \geq y_n^{\frac{1}{5}log y_n}.
\]

Using group theoretical methods we are able to describe the asymptotic growth of
the number of connected components of the moduli space of surfaces
of general type relative to certain sequences of surfaces. More
precisely, we apply the definition and some properties of regular
surfaces isogenous to a product of curves and of some special
cases of them, Beauville surfaces, to reduce the geometric problem
of finding connected components into the algebraic one of counting
orbits of some group action, which can be effectively computed.

The paper is organized as follows.

In the first Section we recall the definition and some properties
of the mapping class group. Moreover, we describe the Hurwitz moves
in the most general setting.

In the second Section we briefly recall the definition of surfaces
isogenous to a product of curves. We will recall some of the
properties of these surfaces focusing on their moduli space.

In the third part we  present some results obtained with Shelly
Garion about the number of connected components of the moduli
space of surfaces isogenous to a product.

Finally, we also point out some possible future
developments.

\medskip

 \textbf{Acknowledgments.} The author is grateful to G. Bini for reading and commenting the paper. Moreover the author thanks the organizers of the conference \textit{Beauville surfaces and Groups} N. Barker, I. Bauer, S. Garion and A. Vdovina for the invitation and the kind hospitality.
 

\section{Braid Group and Mapping Class Group}\label{sec.BraidGroup}

In this section, we first recall the definition of mapping class group. Next
we give a presentation of it for
$\mathbb{P}^1-\{p_1,\dots,p_r\}$, and more generally for a curve of genus $g'$
with $r$ marked points. After that, we calculate the Hurwitz moves induced by
those groups. We mainly follow the definitions and notation
of \cite{catdd}.

\begin{defin} Let $M$ be a differentiable manifold, then the \emph{mapping
class group}\index{Mapping Class Group} of $M$ is the group:
\[ \Map(M):= \pi_0(\Diff^+(M)):=\Diff^+(M)/\Diff^0(M),
\]
where $\Diff^+(M)$ is the group of orientation preserving
diffeomorphisms of $M$ and $\Diff^0(M)$ is the subgroup of
diffeomorphisms of $M$ isotopic to the identity.
\\ If $M$ is a compact complex curve of genus $g'$ we use the following notation:
\begin{enumerate}
    \item We denote the mapping class group of $M$ without marked points by
    $\Map_{g'}$.
    \item If we consider $r$ unordered marked points $p_1,\dots, p_r$ on $M$ we define:
\[ \Map_{g',[r]}=\pi_0(\Diff^+(M-\{p_1,\dots,p_r\})),
\]
and this is known as the \emph{full mapping class group}.
\end{enumerate}
\end{defin}
There is a way to present the full mapping class group of a curve using
three different types of twists.
\begin{theo}\label{thm_braid} The mapping class group
$\Map_{0,[r]} = \pi_0(\Diff^+(\mathbb{P}^1-\{p_1,\dots,p_r\}))$ is isomorphic to the \emph{braid group} $\mathbf{B}_r$  on $r$ strands, which can
be presented as
\[
    \mathbf{B}_r = \langle \sigma_1,\dots,\sigma_{r-1} | \sigma_i \sigma_{i+1}
    \sigma_i = \sigma_{i+1} \sigma_i\sigma_{i+1}, \sigma_i \sigma_j =
    \sigma_j \sigma_i \text{ if } |i-j|\geq 2 \rangle.
\]
\end{theo}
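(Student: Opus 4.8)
The plan is to prove the theorem by constructing an explicit isomorphism out of \emph{half-twists} and recognizing it, at the level of generators and relations, as Artin's presentation of $\mathbf{B}_r$; this is the genus-zero instance of the presentation of $\Map_{g',[r]}$ by the three types of twists of \cite{catdd}. Place $p_1,\dots,p_r$ on the equator of $\pp^1=S^2$ in cyclic order, and for $i=1,\dots,r-1$ fix an embedded arc $\delta_i$ joining $p_i$ to $p_{i+1}$ and avoiding the other marked points, chosen so that $\delta_i$ and $\delta_j$ are disjoint whenever $|i-j|\ge 2$. Let $\sigma_i\in\Map_{0,[r]}$ be the half-twist along $\delta_i$: it is supported in a disc $D_i$ with $\delta_i\subset D_i$ and containing no other $p_k$, it interchanges $p_i$ and $p_{i+1}$, and it is the identity on $\pp^1\setminus D_i$. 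Sending the $i$-th Artin generator of $\mathbf{B}_r$ to $\sigma_i$ gives a candidate homomorphism $\Phi\colon\mathbf{B}_r\to\Map_{0,[r]}$, and the content of the theorem is precisely that $\Phi$ is well defined and bijective.

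First I would verify that $\Phi$ is well defined, i.e., that the half-twists satisfy the braid relations
\[
\sigma_i\sigma_j=\sigma_j\sigma_i\ \ (|i-j|\ge 2),\qquad \sigma_i\sigma_{i+1}\sigma_i=\sigma_{i+1}\sigma_i\sigma_{i+1}.
\]
The first is clear, since for $|i-j|\ge 2$ the supporting discs $D_i$ and $D_j$ are disjoint; the second is the classical three-point picture, carried out inside a disc containing exactly $p_i,p_{i+1},p_{i+2}$, where both words turn out to be isotopic, relative to the boundary of that disc, to the same mapping class. Next I would prove surjectivity of $\Phi$, i.e., that $\sigma_1,\dots,\sigma_{r-1}$ generate $\Map_{0,[r]}$. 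I would do this by induction on $r$ via point-pushing (Birman-type) exact sequences: forgetting a marked point produces the mapping class group on one fewer point, generated by half-twists by induction, while the kernel consists of classes that push a point around simple loops, each of which is a product of two half-twists; the cases $\Map_{0,[1]}=1$ and $\Map_{0,[2]}=\langle\sigma_1\rangle$ start the induction. Alternatively one can quote the classical fact that the mapping class group of a punctured surface is generated by twists, which in genus zero are exactly the half-twists along a chain of arcs.

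The remaining step — injectivity of $\Phi$, equivalently the assertion that the braid relations already present $\Map_{0,[r]}$ — is the main obstacle, and is where I would rely on \cite{catdd}. I would start from the presentation of $\Map_{g',[r]}$ by Dehn twists, braid half-twists and handle twists (the three types described below) and specialize to $g'=0$: the handle generators then do not occur, and the genus-zero relations allow one to rewrite every remaining Dehn-twist generator as a word in $\sigma_1,\dots,\sigma_{r-1}$, hence to eliminate it, leaving exactly the braid presentation. The genuinely delicate part is the bookkeeping of basepoints and of the chosen arc and curve systems while passing between the punctured sphere $\pp^1\setminus\{p_1,\dots,p_r\}$ and the half-twist model, so that no relation is lost or spuriously introduced in this rewriting; granting that, $\Phi$ is an isomorphism and $\Map_{0,[r]}\cong\mathbf{B}_r$ with the presentation asserted in the theorem.
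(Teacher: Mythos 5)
The paper does not actually prove this statement: the line following the theorem simply refers to \cite[Theorem 1.11]{Bir}, so there is no in-paper argument to compare yours against. Your outline --- define $\Phi\colon\mathbf{B}_r\to\Map_{0,[r]}$ on half-twists along a chain of arcs, verify the two braid relations by disjointness of supports and the local three-point picture, and prove surjectivity by induction using the Birman exact sequence --- is the standard route, and those parts are sound.

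The genuine gap is the injectivity step, and it is not a matter of bookkeeping: $\Phi$ is \emph{not} injective, because on the sphere the half-twists satisfy relations beyond Artin's. Concretely, pushing $p_1$ once around $p_2,\dots,p_r$ corresponds to the word $\sigma_1\sigma_2\cdots\sigma_{r-1}^2\cdots\sigma_2\sigma_1$, which is nontrivial (of infinite order) in $\mathbf{B}_r$; but the corresponding point-push in $\Map_{0,[r]}$ is along a loop bounding an unpunctured disc on the other side of $\PP^1$, hence is isotopic to the identity. Your own base case already exhibits the failure: $\Map_{0,[2]}\cong\ZZ/2\ZZ$ is generated by $\sigma_1$ \emph{of order two}, whereas $\mathbf{B}_2\cong\ZZ$; more generally $\mathbf{B}_r$ is torsion-free while $\Map_{0,[r]}$ has torsion. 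So no elimination of generators can leave ``exactly the braid presentation.'' What \cite[Theorem 1.11]{Bir} actually provides is the presentation of the \emph{sphere} braid group, namely $\mathbf{B}_r$ modulo the normal closure of $\sigma_1\cdots\sigma_{r-1}^2\cdots\sigma_1$, and $\Map_{0,[r]}$ is a further quotient by the order-two centre generated by $(\sigma_1\cdots\sigma_{r-1})^r$. The presentation as printed is the correct statement for the $r$-punctured \emph{disc}; for the purposes of this paper only the surjection $\mathbf{B}_r\twoheadrightarrow\Map_{0,[r]}$ is used (it induces the same Hurwitz orbits on systems of generators), so you should either content yourself with proving that surjection, or prove the corrected presentation including the extra sphere relation and the central relation.
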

For a proof of the above Theorem see, for example, \cite[Theorem 1.11]{Bir}.

In this way Artin's standard generators $\sigma_i$ ($i=1, \ldots ,r-1$) of $\mathbf{B}_r$  can be
represented by the so-called half-twists.

\begin{defin} The \emph{half-twist} $\sigma_j$ is a diffeomorphism of
$\mathbb{P}^1-\{p_1,\dots,p_r\}$ isotopic to the homeomorphism given by (see Figure 1):
\begin{itemize}
    \item A rotation of $180$ degrees on the disk with center
    $j+\frac{1}{2}$ and radius $\frac{1}{2}$;
    \item on a circle with the same center and radius
    $\frac{2+t}{4}$ the map $\sigma_j$ is the identity if $t \geq
    1$ and a rotation of $180(1-t)$ degrees, if $t \leq 1$.
\end{itemize}
\end{defin}

\begin{center}
\includegraphics[totalheight=5 cm, width=15 cm]{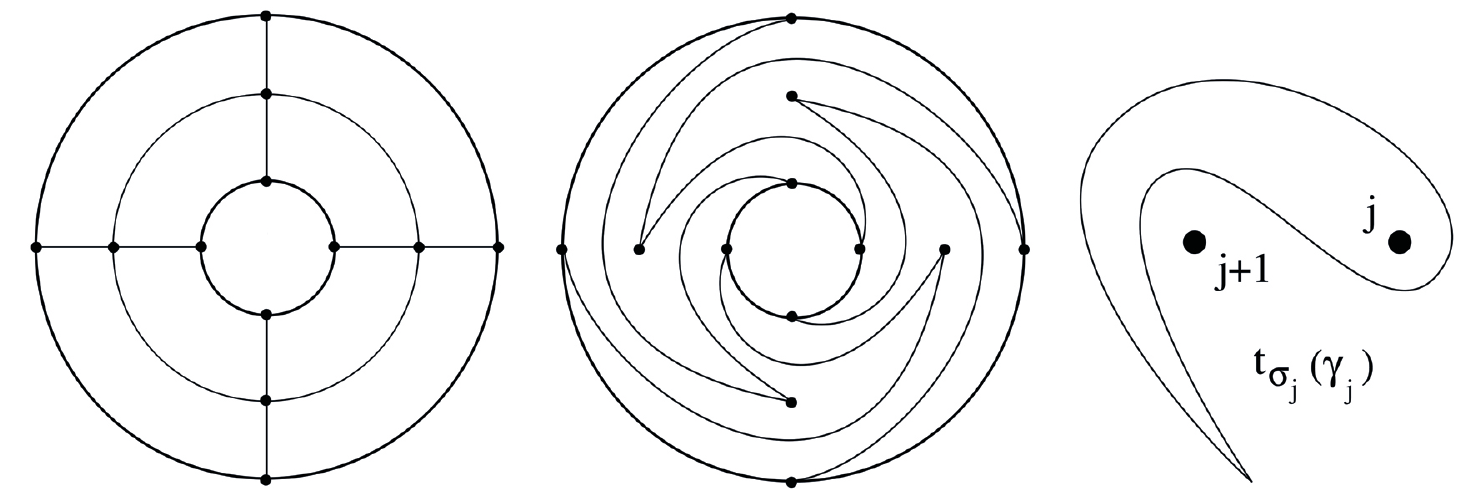} \\
Figure 1.
\end{center}


We want to give a similar presentation for a group $\Map_{g'}$
with $g' \geq 1$, so we have to introduce the Dehn twists.
\begin{defin} Let $C$ be an oriented Riemann surface. Then a
\emph{positive Dehn twist} $t_{\alpha}$ with respect to a simple closed curve
$\alpha$ on $C$ is an isotopy class of a diffeomorphism $h$ of $C$
which is equal to the identity outside a neighborhood of $\alpha$
orientedly homeomorphic to an annulus in the plane, while inside
the annulus $h$ rotates the inner boundary of the annulus by
$360^{\circ}$ to the right and damps the rotation down to the
identity at the outer boundary (see Figure 2).
\end{defin}

\begin{center}
\includegraphics[totalheight=5 cm, width=15 cm]{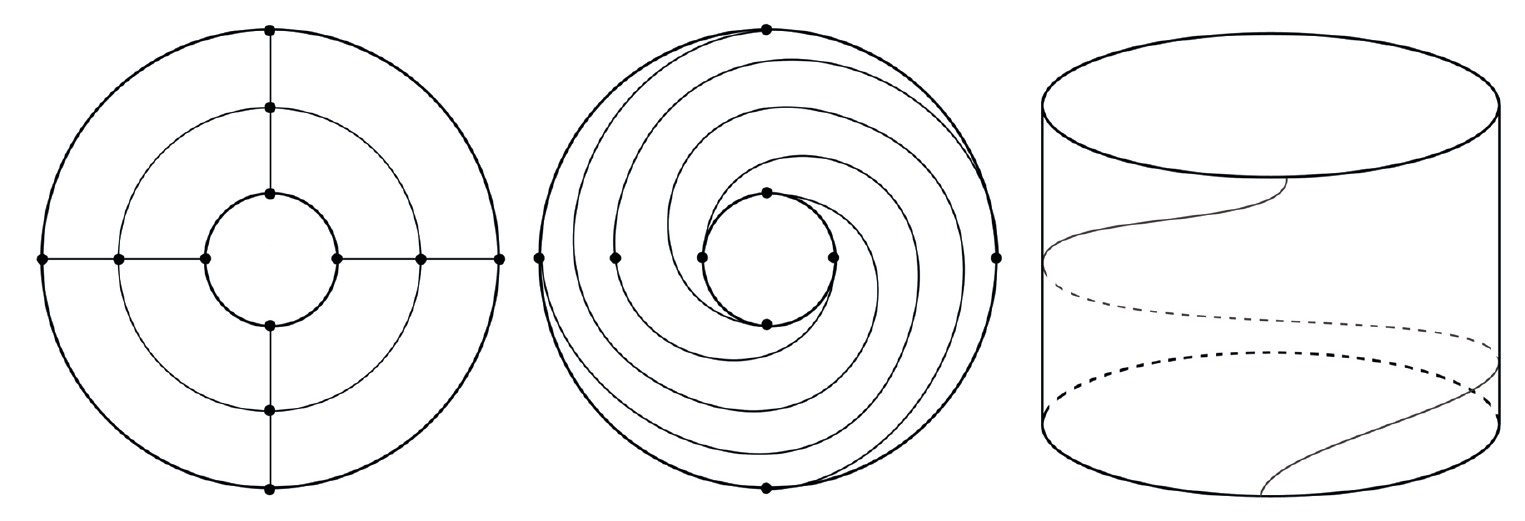} \\
Figure 2.
\end{center}

We have then the following classical results of Dehn \cite{deh}.
\begin{theo} The mapping class group $\Map_{g'}$ is
generated by Dehn twists.
\end{theo}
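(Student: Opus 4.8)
The plan is to prove the statement by induction on the genus $g'$, exploiting the action of $\Map_{g'}$ on isotopy classes of simple closed curves on $C$ together with a cut-and-reglue reduction. Throughout, write $T_{g'} \leq \Map_{g'}$ for the subgroup generated by all Dehn twists, so that the claim is $T_{g'} = \Map_{g'}$. For the base cases: when $g' = 0$ the surface is $S^2$, $\Map_0$ is trivial, and there is nothing to prove; when $g' = 1$ one identifies $\Map_1 \cong \mathrm{SL}_2(\mathbb{Z})$ through the faithful action on $\mathrm{H}_1(C,\mathbb{Z}) \cong \mathbb{Z}^2$, and observes that the two elementary unipotent matrices generating $\mathrm{SL}_2(\mathbb{Z})$ are the images of the Dehn twists about a meridian and a longitude (a pair of curves meeting once), so $\Map_1 = T_1$.

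\emph{Transitivity on non-separating curves.} The heart of the argument is to show that $T_{g'}$ acts transitively on the set $\mathcal{V}$ of isotopy classes of non-separating simple closed curves on $C$. To this end I form the graph $\mathcal{N}$ with vertex set $\mathcal{V}$, joining two classes by an edge when they have geometric intersection number $1$, and prove $\mathcal{N}$ is connected by a surgery/intersection-number induction: given non-separating $a,b$ with $i(a,b) = n \geq 2$, one resolves a suitably chosen intersection point to obtain a simple closed curve that is still non-separating and meets both $a$ and $b$ in fewer than $n$ points, while the case $i(a,b) = 0$ with $a \cup b$ non-separating is handled by exhibiting a curve meeting each of $a,b$ exactly once (the change-of-coordinates principle reduces this to an explicit local picture). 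For adjacent vertices, i.e. $a,b$ with $i(a,b) = 1$, a regular neighbourhood of $a \cup b$ is a one-holed torus on which $t_a$ and $t_b$ are supported, and the $\mathrm{SL}_2(\mathbb{Z})$ computation shows $t_a t_b t_a$ carries $a$ to $b$ (up to orientation). Walking along a path in $\mathcal{N}$ therefore produces an element of $T_{g'}$ sending any non-separating curve to any other; composing once more with $(t_a t_b t_a)^2$, which reverses the orientation of $a$ while fixing everything outside the one-holed torus, lets us also prescribe the induced orientation.

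\emph{Inductive step.} Fix a non-separating simple closed curve $\alpha \subset C$ and let $f \in \Map_{g'}$. By the previous paragraph there is $\tau \in T_{g'}$ with $(\tau f)(\alpha) = \alpha$ as an oriented isotopy class, and after isotopy we may assume $\tau f$ fixes a tubular neighbourhood of $\alpha$ pointwise. Cutting $C$ along $\alpha$ yields a connected surface $C'$ of genus $g'-1$ with two boundary circles, and $\tau f$ descends to a mapping class $\phi$ of $C'$ fixing $\partial C'$. Capping the two boundary circles by disks gives a closed surface of genus $g'-1$, and by the inductive hypothesis the image of $\phi$ there is a product of Dehn twists; lifting those twists to Dehn twists on $C'$ supported away from the caps, we see $\phi$ agrees with a product of Dehn twists up to an element of the kernel of the capping homomorphism. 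That kernel is generated by the two boundary Dehn twists of $C'$ and by point-pushing homeomorphisms, and each point-push is itself a product of Dehn twists (Birman exact sequence), so $\phi$ is a product of Dehn twists of $C'$; regluing, each becomes a Dehn twist of $C$, the boundary twists becoming $t_\alpha$. Hence $\tau f \in T_{g'}$, so $f \in T_{g'}$, completing the induction.

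The delicate point is the transitivity statement: proving connectivity of $\mathcal{N}$ requires the careful surgery argument that simultaneously decreases intersection numbers while preserving non-separatedness, and in the inductive step one must keep precise track of the orientation of $\alpha$ and of the Birman exact sequence so that every intermediate mapping class is genuinely exhibited as a product of Dehn twists rather than merely a homeomorphism. The rest is bookkeeping with the classification of surfaces and the change-of-coordinates principle.
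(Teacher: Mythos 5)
Your argument is correct in outline: it is the standard modern proof of the Dehn--Lickorish theorem (induction on genus, transitivity of the twist subgroup on non-separating simple closed curves via connectivity of the graph whose edges are pairs of curves with geometric intersection number one, then cutting along a non-separating curve and controlling the kernel of the capping map by boundary twists and point-pushes, each of which is itself a product of Dehn twists). The paper, by contrast, offers no proof at all: it records the statement as a classical result of Dehn and points to Birman's book for the argument and for the explicit finite generating set of twists. So there is nothing in the paper to compare step by step; what your write-up adds is an actual self-contained strategy, and the two places where it is genuinely only a sketch are exactly the ones you flag. First, the surgery step for $i(a,b)\geq 2$ must be done at a pair of consecutive intersection points along one of the curves and one must verify that at least one of the resulting resolved curves is non-separating, not merely that intersection numbers drop; as stated, a single resolution can produce a separating curve. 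Second, in the inductive step the induced map on the cut surface lives in $\pi_0$ of diffeomorphisms fixing the boundary pointwise, so the kernel you must control is an extension of the point-pushing subgroup by the central subgroup generated by the two boundary twists, and the identification of a point-push along a simple loop with $t_{a}t_{b}^{-1}$ for the two boundary curves of an annular neighbourhood (plus the fact that $\pi_1$ of the cut surface is generated by simple loops) is what closes the argument. With those two points made precise the proof is complete, and it establishes rather more than the quoted statement, namely generation by finitely many twists about non-separating curves.
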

We give the generators of the group $\Map_{g'}$.
\begin{theo} The group $\Map_{g'}$ is generated by the Dehn
twists with respect to the curves in the Figure 3.
\begin{center}
\includegraphics[totalheight=5 cm, width=15 cm]{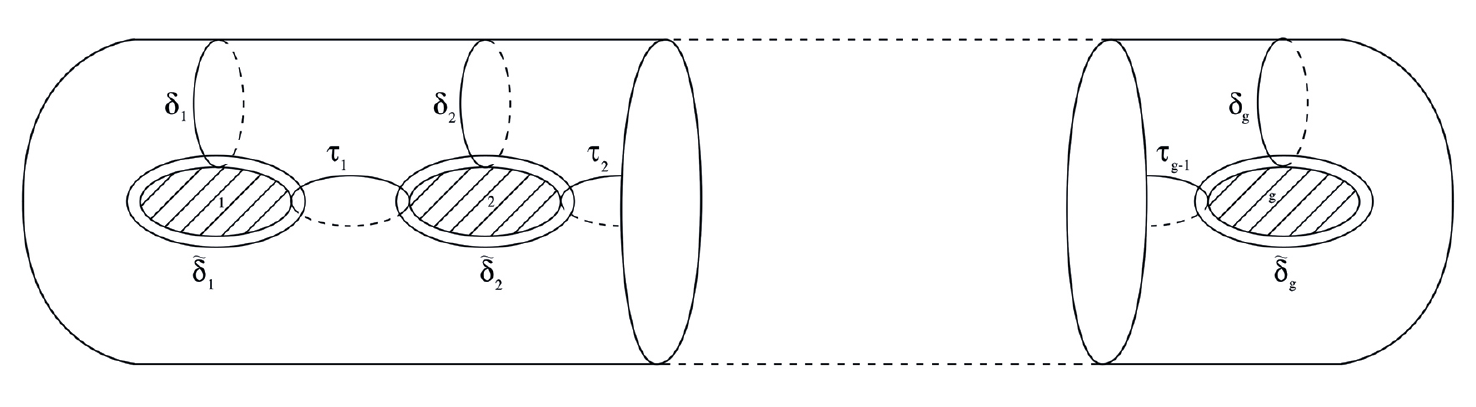} \\
{\rm Figure 3}.
\end{center}
\end{theo}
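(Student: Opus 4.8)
The plan is to deduce this from the previous theorem, which asserts that $\Map_{g'}$ is generated by \emph{all} Dehn twists; it will therefore suffice to show that the Dehn twist $t_\gamma$ about an arbitrary simple closed curve $\gamma$ on $C$ lies in the subgroup $H\leq\Map_{g'}$ generated by the twists about the finitely many curves drawn in Figure 3. The basic tool is the \emph{change of coordinates principle}: for any $f\in\Map_{g'}$ and any simple closed curve $\gamma$ one has $f\,t_\gamma\,f^{-1}=t_{f(\gamma)}$, so the set of isotopy classes $\gamma$ with $t_\gamma\in H$ is invariant under the natural action of $H$ on isotopy classes of curves. Hence it is enough to establish two facts: (i) $H$ acts transitively on isotopy classes of non-separating simple closed curves, which forces every such twist into $H$; and (ii) the twist about any separating simple closed curve can be written in terms of twists about non-separating ones, hence also lies in $H$. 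Combining (i) and (ii) gives $t_\gamma\in H$ for every $\gamma$, so $H=\Map_{g'}$.

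For step (i) I would use that the curves of Figure 3 contain a chain $c_1,c_2,\dots,c_{2g'}$ (consecutive curves meeting transversally in one point, non-consecutive ones disjoint) together with one or more further reference curves. From the braid relation $t_a t_b t_a=t_b t_a t_b$ and the standard fact that $t_a t_b$ maps $a$ to a curve isotopic to $b$ whenever $a$ and $b$ meet in a single point, one checks first that each $c_i$ is carried into the $H$-orbit of $c_1$. One then argues by induction on $g'$ that an \emph{arbitrary} non-separating curve can be moved, by an element of $H$, into ``standard position'', i.e. into the $H$-orbit of $c_1$: cutting along such a curve leaves a connected surface, and after using the generators to reduce step by step the geometric intersection numbers of the curve with the fixed reference family, one reaches a standard representative. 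This is the classical Lickorish argument, in the sharpened form of Humphries that produces exactly the generating set pictured.

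For step (ii) I would invoke the \emph{chain relation} (equivalently, in the relevant low genus cases, the lantern relation): a regular neighbourhood of a chain $c_1,\dots,c_k$ is a subsurface whose boundary is a single simple closed curve $\delta$ when $k$ is even (two curves $\delta_1,\delta_2$ when $k$ is odd), and $t_\delta$ (respectively $t_{\delta_1}t_{\delta_2}$) equals an explicit power of $t_{c_1}t_{c_2}\cdots t_{c_k}$. Every separating curve bounds a subsurface which, by the change of coordinates principle together with (i), can be brought by an element of $H$ to the boundary of such a standard chain neighbourhood; hence its twist lies in $H$. The main obstacle is step (i): converting the soft assertion ``any two non-separating curves are equivalent under a homeomorphism'' into the concrete assertion ``the equivalence is realized by a product of the specific twists of Figure 3'' requires the careful inductive bookkeeping of Lickorish and Humphries, tracking how successive applications of the generators decrease the intersection of a given curve with the reference collection. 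Once (i) is secured, everything else — the braid relation and the chain/lantern relations — is a purely formal manipulation.
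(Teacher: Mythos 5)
The paper does not actually prove this theorem: it simply refers the reader to Birman's book (\cite[Theorem 4.8]{Bir}), so there is no internal argument to compare yours against. That said, your outline is the standard Lickorish--Humphries strategy, which is essentially what the cited reference carries out: reduce to Dehn's theorem that all twists generate, use the conjugation identity $f t_\gamma f^{-1}=t_{f(\gamma)}$ to show it suffices that the subgroup $H$ generated by the Figure~3 twists acts transitively on nonseparating simple closed curves, and then express twists about separating curves via the chain (or, where it embeds, the lantern) relation. The logic of the reduction is sound. The one caveat is that your step (i) --- transitivity of $H$ itself (not merely of $\Map_{g'}$) on nonseparating curves --- is the entire content of the theorem, and you only name the inductive intersection-number-reduction argument rather than carry it out; as a complete proof this would need to be supplied, but as a proof strategy it is exactly the right one and coincides with what Birman does. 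A minor point worth checking in a full write-up: in genus $2$ the lantern relation is not available for the separating curve (a four-holed sphere with all boundary curves essential and nonseparating does not embed suitably), so there one must use the chain relation $(t_{c_1}t_{c_2}t_{c_3}t_{c_4})^{10}=t_\delta$; you do mention the chain relation, so this is only a matter of making the case distinction explicit.
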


A proof of the above Theorem can be found in \cite[Theorem
4.8]{Bir}.

For a more general situation where the surface has $g'>0$ and $r$
marked points we need to introduce a third type of twist: The
$\xi$-twists, which link the holes with the marked points. Let us
recall the Birman short exact sequence for a Riemann surface $C'$
with $g(C')>1$ or $r>1$, setting $\mathcal{B}:=\{p_1, \ldots ,p_r\}$:
\begin{equation}\label{eq.Birman} 1 \longrightarrow \pi_1(C' \setminus \mathcal{B},p)
\stackrel{\Xi}{\longrightarrow} \pi_0(\Diff^+(C'\setminus (\mathcal{B} \ \cup\{p\}))
\longrightarrow \pi_0(\Diff^+(C'\setminus \mathcal{B}) \longrightarrow 1.
 \end{equation}
 The map $\Xi$ can be described as follows (cf. \cite{B}). Let $[\gamma] \in
 \pi_1(C' \setminus \mathcal{B},p)$ and $\gamma$ be a simple, smooth loop based at $p$ representing $[\gamma]$. Then $\Xi([\gamma])$ is the isotopy class of a
 $\xi$-twist with respect to the closed curve $\gamma$. In addition, this new twist is isotopic to the identity in
$\pi_0(\Diff^+(C\setminus (\mathcal{B}))$.
 Let us now describe a $\xi$-twist. We shall
consider the annulus $A:=\{z=\rho e^{i \theta} \in \mathbb{C} | 1
\leq \rho \leq 2\}$, and we define $h: A \rightarrow A$ as follows
\begin{equation}
h(\rho, \theta): \left\{
\begin{array}{rl}
\big(\rho, \theta - 4\pi(\rho - 1)\big) & 1 \leq \rho \leq \frac{3}{2}  \\
\big(\rho, \theta - 4\pi(2-\rho)\big) & \frac{3}{2} \leq \rho \leq 2. \\

\end{array}
\right.
\end{equation}
\begin{defin}
Let $C$ be a Riemann surface, and $\alpha$ a simple closed curve
on $C$. Let $\iota$ be a diffeomorphism between $A$ and a tubular
neighborhood of $\alpha$. Then the \emph{$\xi$-twist}
$\mathbf{t}_{\alpha}$ with respect to $\alpha$ is defined as $\iota \circ
h \circ \iota^{-1} \mid_{\iota(A)}$ extended to the whole $C$ as
the identity on $C \setminus \iota(A)$ (see Figure 4).
\end{defin}
\begin{center}
\includegraphics[totalheight=5 cm, width=15 cm]{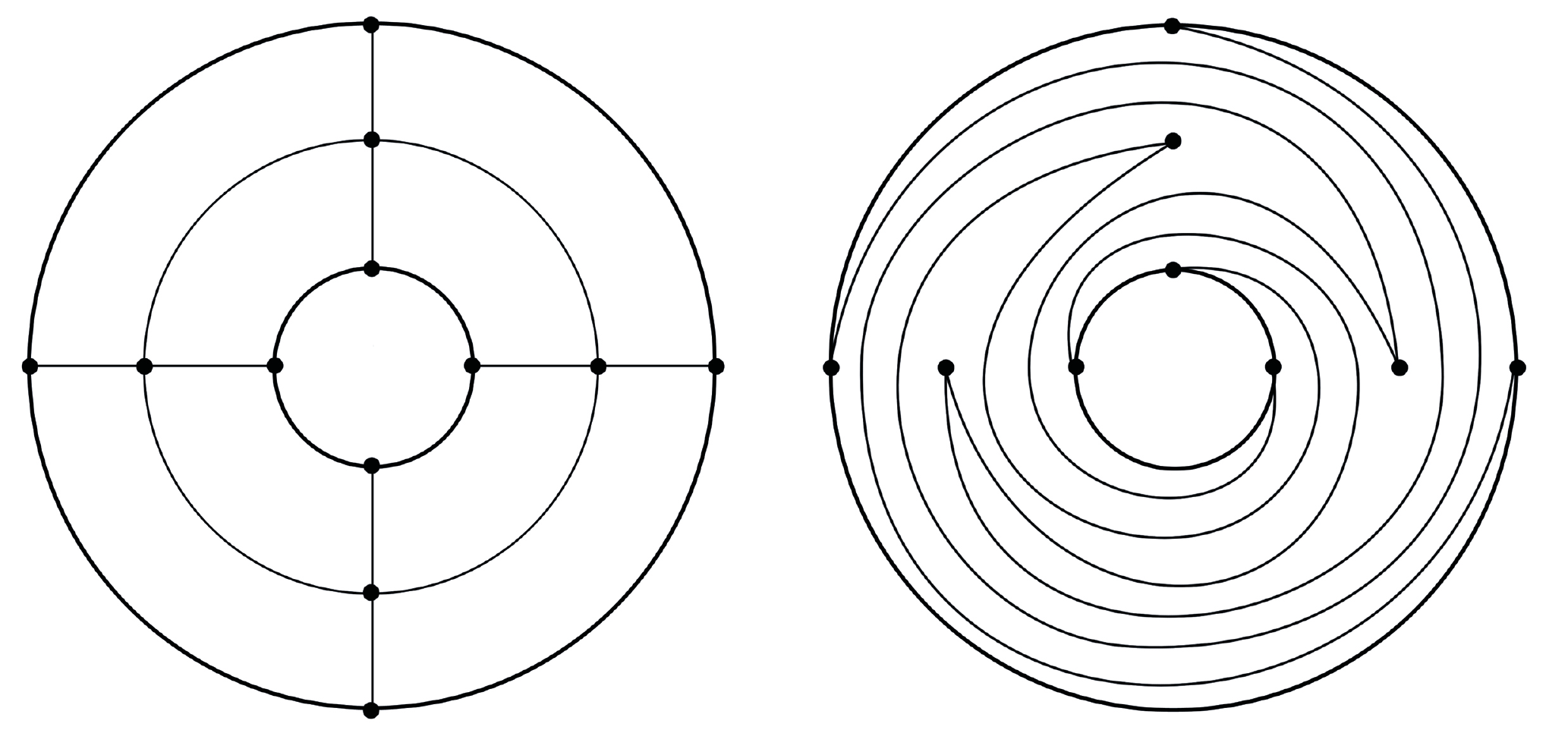} \\
Figure 4.
\end{center}

Therefore, for the more general situation we have the following.
\begin{theo}\cite[Theorem 3]{B} Let $g(C') \neq 0$ and $g(C')>1$ or $r>1$ then the group $\Map_{g',[r]}$ is generated by the $3g'-1$ Dehn twists with respect
to the curves $\delta_j$, $\tilde{\delta_j}$ and $\tau_j$, by the $2rg'$ $\xi$-twists with respect to the curves
$\xi^l_{j,d}$ and the $r-1$ half-twists about the points $p_1, \ldots
,p_r$ in {\rm Figure 5}.
\end{theo}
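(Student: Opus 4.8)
The plan is to deduce the statement from the Birman exact sequence \eqref{eq.Birman} together with the two presentations already at our disposal: Theorem (Dehn's generators of $\Map_{g'}$) in the unpunctured case and Theorem \ref{thm_braid} together with its generalization to $\Map_{g',[r]}$ in the genus-zero case. The idea is an induction on the number $r$ of marked points, peeling off one puncture at a time via \eqref{eq.Birman}.

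First I would set up the base cases. If $g'\geq 2$ and $r=0$, the claim is exactly Dehn's theorem that $\Map_{g'}$ is generated by the $3g'-1$ Dehn twists along $\delta_j,\tilde\delta_j,\tau_j$ pictured in Figure 3; there are no $\xi$-twists and no half-twists, and $3g'-1$ matches. If $g'=0$, then by Theorem \ref{thm_braid} the group $\Map_{0,[r]}\cong\mathbf{B}_r$ is generated by the $r-1$ half-twists $\sigma_1,\dots,\sigma_{r-1}$; again the count $2rg'=0$ Dehn-and-$\xi$ contributions is vacuous, and we are done.

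Next comes the inductive step. Assume the statement holds for $(g',r-1)$ with $g'\geq 1$ (so that $g(C')>1$ or $r-1\geq 1$, keeping us in the range where \eqref{eq.Birman} applies, or else handling the tiny remaining cases by hand). Apply \eqref{eq.Birman} with $\mathcal{B}=\{p_1,\dots,p_{r-1}\}$ and the extra puncture $p=p_r$: this gives
\[
1 \longrightarrow \pi_1(C'\setminus\mathcal{B},p_r) \stackrel{\Xi}{\longrightarrow} \Map_{g',[r]} \longrightarrow \Map_{g',[r-1]} \longrightarrow 1
\]
(after identifying unordered and ordered configurations appropriately, and using that forgetting $p_r$ induces the quotient). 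A short exact sequence is generated by any lift of a generating set of the quotient together with a generating set of the kernel. By the inductive hypothesis $\Map_{g',[r-1]}$ is generated by the $3g'-1$ Dehn twists, the $2(r-1)g'$ $\xi$-twists, and the $r-2$ half-twists; each of these lifts to the corresponding twist in $\Map_{g',[r]}$ (the Dehn twists along $\delta_j,\tilde\delta_j,\tau_j$ literally make sense on the surface with one more puncture, and the half-twists $\sigma_1,\dots,\sigma_{r-2}$ and the old $\xi$-twists persist). For the kernel $\pi_1(C'\setminus\mathcal{B},p_r)$, which is free of rank $2g'+(r-1)$ if $r\geq 1$, I would exhibit $\Xi$ applied to a standard free generating set: the $2g'$ loops around the handles produce exactly the $2g'$ new $\xi$-twists $\xi^l_{r,d}$ linking $p_r$ with the handles, and the $r-2$ loops around $p_1,\dots,p_{r-2}$ are carried by $\Xi$ to products of the half-twist $\sigma_{r-1}$ with conjugates of already-listed generators (a standard "point-pushing equals a product of twists" computation, cf. \cite{B}), so that modulo the generators we already have, the single new half-twist $\sigma_{r-1}$ suffices. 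Tallying: $3g'-1$ Dehn twists, $2(r-1)g'+2g'=2rg'$ $\xi$-twists, and $(r-2)+1=r-1$ half-twists, as claimed.

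The main obstacle is the last point: making precise how the point-pushing maps $\Xi([\gamma])$ for $\gamma$ a loop around a puncture or around a handle decompose in terms of the pictured $\xi$-twists and the half-twist $\sigma_{r-1}$. This is essentially the content of Birman's Theorem 3 in \cite{B}, and the honest thing is to quote it — indeed the statement is \emph{attributed} to \cite[Theorem 3]{B} — rather than to reprove the twist-decomposition identities, which are delicate and picture-dependent. So in practice the "proof" is: reduce to \cite[Theorem 3]{B} via the Birman sequence and the genus-zero case \ref{thm_braid}, checking only the generator count and the compatibility of the lifts, and refer to \cite{B} for the structural input.
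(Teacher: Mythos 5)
The paper offers no proof of this statement: it is quoted directly from Birman, with the citation \cite[Theorem 3]{B} standing in for the argument. Your sketch is therefore necessarily a different route, and it is the standard one: induct on $r$ via the Birman exact sequence \eqref{eq.Birman}, lift the generators of $\Map_{g',[r-1]}$, and generate the kernel $\pi_1(C'\setminus\mathcal{B},p_r)$ by point-pushing maps, which by the paper's description of $\Xi$ are exactly the $\xi$-twists along handle loops and (powers/conjugates of) half-twists along puncture loops. The structure is sound and your final tally of $3g'-1$, $2rg'$ and $r-1$ generators is correct, but three details deserve attention. First, the kernel is free of rank $2g'+(r-1)-1=2g'+r-2$, not $2g'+(r-1)$; your chosen generating set ($2g'$ handle loops plus $r-2$ puncture loops) has the right cardinality, so this is only a miscount in the stated rank, not in the argument. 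Second, \eqref{eq.Birman} as written presupposes that the forgotten puncture is distinguished, whereas $\Map_{g',[r]}$ is the full (unordered) mapping class group; strictly the surjection onto $\Map_{g',[r-1]}$ is defined on the stabilizer of $p_r$, and one should note separately that the half-twists already surject onto $\mathfrak{S}_r$, so that pure generation plus the $\sigma_i$ gives full generation. Third, the base case $g'=0$ lies outside the hypotheses ($g(C')\neq 0$), so the induction should start from Dehn's theorem for closed surfaces, with the case $g'=1$, $r=1$ (where \eqref{eq.Birman} degenerates) handled as in Proposition \ref{mong1n1}. None of these is fatal, and your decision to defer the delicate point-pushing decompositions to \cite{B} is reasonable, since that is precisely what the paper itself does; what your sketch adds over the paper is the reduction showing that only those decompositions, and not the generation statement as a whole, need to be imported.
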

\begin{center}
\includegraphics[totalheight=7 cm, width=18 cm]{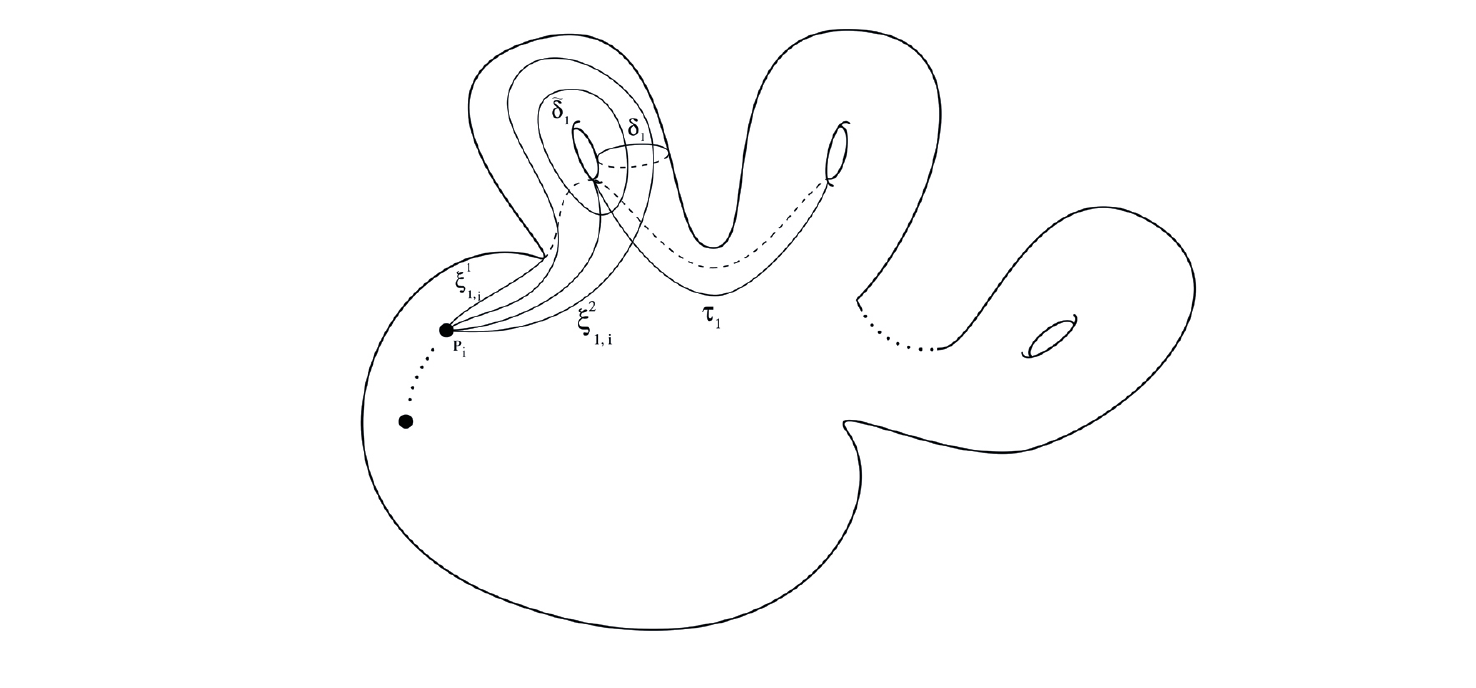} \\
Figure 5.
\end{center}

Let $C'$ be a Riemann surfaces of genus $g'$, and let
$\mathcal{B}:=\{ p_1, \ldots ,p_r\}$ a set of points on $C'$. A
\emph{geometric basis} of $\pi_1(C'\setminus \mathcal{B}, p_0)$
consists of simple non-intersecting (away from the base point)
loops (see Figure 6)
$$
\gamma_1 , \dots , \gamma_d , \alpha_ 1, \beta_1, \dots ,
\alpha_{g'}, \beta_{g'}
$$
such that we get the presentation

\begin{equation*}
 \pi_1(C \setminus \mathcal{B}, p_0):=\langle \alpha_1,\beta_1, \ldots , \alpha_{g'},\beta_{g'},\gamma_{1},
\ldots , \gamma_{r} | \gamma_{1} \cdot \ldots \cdot
\gamma_{r}\cdot\prod_{k=1}^{g'} [\alpha_{k},\beta_{k}]
 =1 \rangle.
\end{equation*}

\begin{center}
\includegraphics[totalheight=7 cm, width=18 cm]{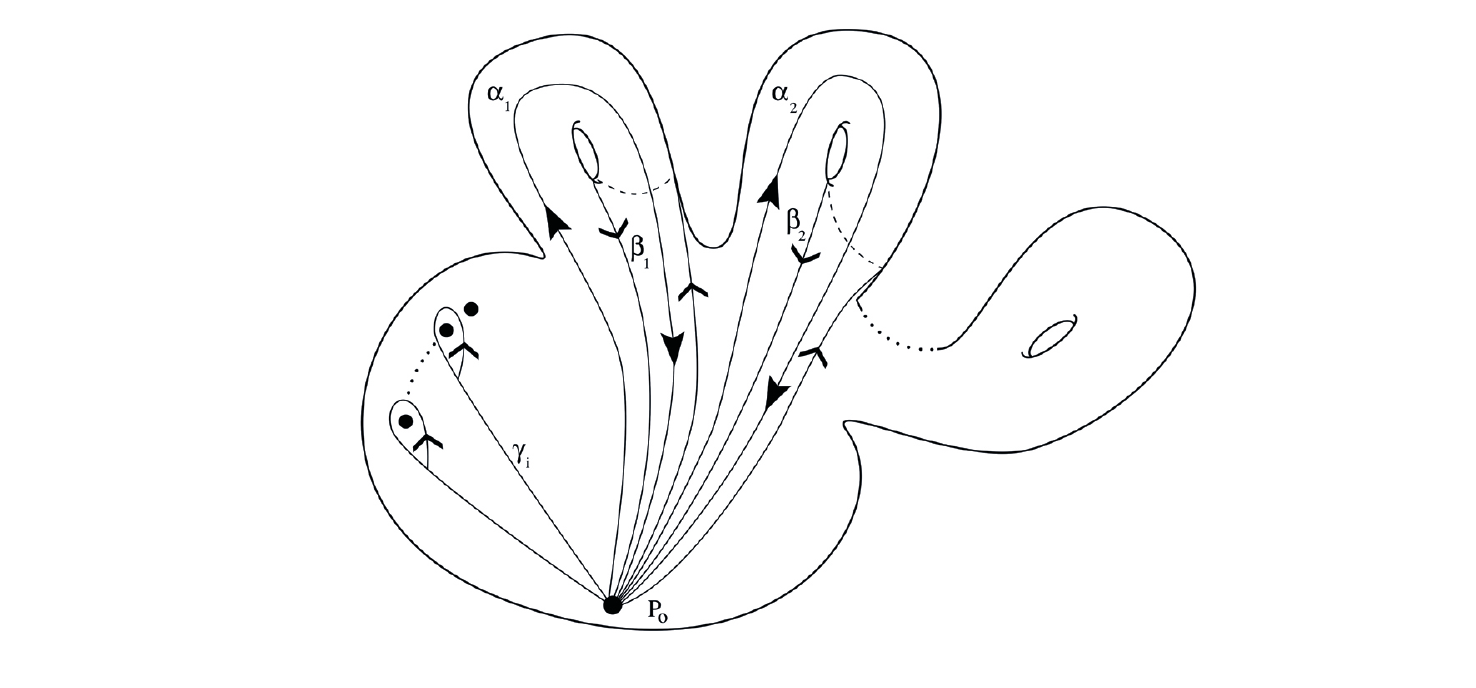} \\
Figure 6.
\end{center}

\begin{defin}\label{defn.orbifold.fuchsian}
Let $g',m_1, \dots , m_r$ be positive integers. An \emph{orbifold
surface group} of type $(g' \mid m_1, \dots , m_r)$ is a group
presented as follows:
\begin{multline*}  \Gamma(g' \mid m_1, \dots , m_r):=\langle a_1,b_1, \ldots , a_{g'},b_{g'},c_{1},
\ldots , c_{r} | \\
c^{m_1}_{1}=\dots=c^{m_r}_{r}=c_{1} \cdot \ldots \cdot
c_{r}\cdot\prod_{k=1}^{g'} [a_{k},b_{k}] =1 \rangle.
\end{multline*}
\end{defin}
We notice that the choice of a geometric basis yields an obvious
epimorphism \\ $ \pi_1(C \setminus \mathcal{B}, p_0) \rightarrow
\Gamma(g' \mid m_1, \dots , m_r)$.

The following is a reformulation of {\em Riemann's existence theorem}:

\begin{theo} A finite group $G$ acts as a group of automorphisms on some
compact Riemann
surface $C$ of genus $g$ if and only if there are natural numbers
$g', m_1, \ldots ,
m_r$, and an orbifold homomorphism
\begin{equation}\label{eq.theta}
\theta \colon \Gamma(g' \mid m_1, \dots , m_r)
\rightarrow G
\end{equation}
such that
$ord(\theta(c_i))=m_i$ for all $i$ and such that the Riemann - Hurwitz relation holds:
\begin{equation}\label{eq.RiemHurw} 2g - 2 = |G|\left(2g'-2 + \sum_{i=1}^r \left(1 -
\frac{1}{m_i}\right)\right).
\end{equation}

\end{theo}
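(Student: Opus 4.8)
The plan is to prove the two implications separately; in both directions the bridge is the branched covering $C \to C/G$ together with its unramified restriction over the complement of the branch locus.

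For the ``only if'' direction I would start from a faithful holomorphic action of $G$ on the compact Riemann surface $C$ of genus $g$ and form the quotient $C' := C/G$, which is again a compact Riemann surface; call its genus $g'$. The projection $\pi \colon C \to C'$ is a degree-$|G|$ holomorphic map, a local biholomorphism away from the finitely many points with non-trivial (necessarily cyclic) stabiliser; let $p_1, \dots, p_r$ be their images, set $\mathcal{B} = \{p_1, \dots, p_r\}$, and let $m_i$ be the common ramification index of $\pi$ over $p_i$. Restricting $\pi$ over $C' \setminus \mathcal{B}$ gives an unramified Galois covering with deck group $G$, so fixing a base point $p_0$ and a geometric basis as in Figure 6 produces a monodromy epimorphism $\pi_1(C' \setminus \mathcal{B}, p_0) \twoheadrightarrow G$. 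A small loop $\gamma_i$ around $p_i$ is sent to a generator of a point-stabiliser over $p_i$, hence to an element of order exactly $m_i$, so this epimorphism descends to an orbifold homomorphism $\theta \colon \Gamma(g' \mid m_1, \dots, m_r) \to G$ with $\mathrm{ord}(\theta(c_i)) = m_i$. Relation \eqref{eq.RiemHurw} is then just the classical Riemann--Hurwitz formula for $\pi$.

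For the ``if'' direction I would run this in reverse. Given $g', m_1, \dots, m_r$ and $\theta$, pick any compact Riemann surface $C'$ of genus $g'$ with $r$ marked points $\mathcal{B}$, a base point $p_0$, and a geometric basis; compose the natural epimorphism $\pi_1(C' \setminus \mathcal{B}, p_0) \twoheadrightarrow \Gamma(g' \mid m_1, \dots, m_r)$ with $\theta$ to get $\varphi \colon \pi_1(C' \setminus \mathcal{B}, p_0) \to G$, which is surjective once we replace $G$ by $\theta(\Gamma)$ if necessary. Since $\ker \varphi$ is normal of index $|G|$, covering theory yields a connected Galois covering $C^{\circ} \to C' \setminus \mathcal{B}$ with deck group $G$, and $C^{\circ}$ carries a unique Riemann surface structure making it holomorphic. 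I would then fill in the punctures: over a small disc around $p_i$ the covering is a disjoint union of copies of $z \mapsto z^{m_i}$, because $\varphi(\gamma_i) = \theta(c_i)$ has order exactly $m_i$, so each sheet is compactified by adding a single point, producing a smooth compact Riemann surface $C$ on which $G$ still acts by deck transformations. The genus of $C$, computed from $\pi \colon C \to C'$ via Riemann--Hurwitz, equals $g$ precisely because $\theta$ satisfies \eqref{eq.RiemHurw}, and $G \hookrightarrow \Aut(C)$.

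The main obstacle is the compactification step in the ``if'' direction: one must check carefully that the local monodromy around $p_i$ having order \emph{exactly} $m_i$ is what forces the branched cover over a punctured disc to be a disjoint union of \emph{discs}, so that $C$ is genuinely smooth and compact rather than acquiring orbifold points, and that the $G$-action extends across the added points (it extends continuously, hence holomorphically by the removable singularity theorem). The hypothesis $\mathrm{ord}(\theta(c_i)) = m_i$, rather than merely $m_i \mid \mathrm{ord}(\theta(c_i))$, is exactly what makes this work; once it is in place, the genus identity is pure bookkeeping through \eqref{eq.RiemHurw}.
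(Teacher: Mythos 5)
The paper offers no proof of this statement: it is quoted as a classical reformulation of Riemann's existence theorem, and the argument you give --- quotienting by the action and reading off the monodromy in one direction, building the cover associated to the kernel of $\pi_1(C'\setminus\mathcal{B},p_0)\twoheadrightarrow\Gamma(g'\mid m_1,\dots,m_r)\stackrel{\theta}{\to}G$ and filling in the punctures in the other --- is precisely the standard proof that the cited sources supply. Your outline is correct, including the key point that $\mathrm{ord}(\theta(c_i))=m_i$ (rather than mere divisibility) is what makes each component of the cover over a punctured disc around $p_i$ an $m_i$-fold cyclic cover of a punctured disc, so that the compactification is smooth and Riemann--Hurwitz yields \eqref{eq.RiemHurw}. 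One small caution: in the ``if'' direction the hedge ``replace $G$ by $\theta(\Gamma)$'' does not rescue the statement for a non-surjective $\theta$, since \eqref{eq.RiemHurw} involves $|G|$ while the deck group of the connected cover you construct is only $\theta(\Gamma)$; the theorem is meant to be read with $\theta$ an admissible epimorphism, as the paper's surrounding discussion makes explicit, and with that reading your proof is complete.
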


If this is the case, then $g'$ is the genus of $C':=C/G$. The
$G$-cover $C \rightarrow C'$ is branched at $r$ points $p_1,
\ldots , p_r$ with branching indices $m_1, \ldots , m_r$,
respectively. Let $\Gamma=\Gamma(g' \mid m_1,...,m_r)$ be an
orbifold surface group with a presentation as in Definition
\ref{defn.orbifold.fuchsian}. If $G$ is a finite group quotient of
$\Gamma$ as in \eqref{eq.theta}, then we say that $G$ is $(g' \mid
m_1,\dots,m_r)-$generated, the image of the generators of $\Gamma$
in $G$ is called a \emph{system of generators} for $G$. Finally,
$\theta$ is called an \emph{admissible epimorphism}.
\begin{defin} An automorphism $\eta \in
\Aut(\Gamma)$ is said to be \emph{orientation preserving} if the action
induced on $<\alpha_1,\beta_1, \cdots,
\alpha_{g'},\beta_{g'}>^{ab}$ has determinant $+1$, and for all
$i\in \{1,\dots,r\}$ there exists $j$ such that $\eta(\gamma_i)$
is conjugate to $\gamma_j$, which implies
$ord(\gamma_i)=ord(\gamma_j)$.

The
subgroup of orientation preserving automorphisms of $\Gamma$ is
denoted by $\Aut^+(\Gamma)$ and the quotient
$\Out^+(\Gamma):=\Aut^+(\Gamma)/\Inn(\Gamma)$ is called the
\emph{mapping class group} of $\Gamma$.
\end{defin}

\begin{theo}\label{out}
Let $\Gamma=\Gamma(g' \mid m_1,\dots,m_r)$ be
an orbifold surface group. Then there is an isomorphism of groups:
\[ \Out^+(\Gamma) \cong \Map_{g',[r]}.
\]
\end{theo}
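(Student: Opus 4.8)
The plan is to make the isomorphism explicit by letting isotopy classes of self-diffeomorphisms act on fundamental groups. Fix a geometric basis $\gamma_1,\dots,\gamma_r,\alpha_1,\beta_1,\dots,\alpha_{g'},\beta_{g'}$ of $\pi_1(C'\setminus\mathcal{B},p_0)$ as above, and recall the tautological epimorphism $q\colon\pi_1(C'\setminus\mathcal{B},p_0)\twoheadrightarrow\Gamma$ sending $\gamma_i\mapsto c_i$, $\alpha_k\mapsto a_k$, $\beta_k\mapsto b_k$, whose kernel $N$ is the normal closure of $\{\gamma_i^{m_i}\}_{i=1}^r$. Given $[\phi]\in\Map_{g',[r]}$, pick a representative $\phi\in\Diff^+(C'\setminus\mathcal{B})$ fixing $p_0$; then $\phi_*\in\Aut(\pi_1(C'\setminus\mathcal{B},p_0))$. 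Since $\phi$ is orientation preserving and sends each puncture to a puncture of the same branching order, $\phi_*(\gamma_i)$ is conjugate to $\gamma_{\sigma(i)}$ for a permutation $\sigma$; hence $\phi_*(N)=N$ and $\phi_*$ descends to $\bar\phi\in\Aut(\Gamma)$. Filling in the punctures extends $\phi$ to a diffeomorphism of the closed surface $C'$, so the induced map on $\langle a_1,b_1,\dots,a_{g'},b_{g'}\rangle^{\mathrm{ab}}\cong H_1(C';\ZZ)$ has determinant $+1$; with the behaviour on the $c_i$ this puts $\bar\phi$ in $\Aut^+(\Gamma)$. Changing the basepoint or replacing $\phi$ by an isotopic diffeomorphism only changes $\phi_*$ by an inner automorphism, so we obtain a well-defined homomorphism
\[
\Phi\colon\Map_{g',[r]}\longrightarrow\Out^+(\Gamma),\qquad[\phi]\longmapsto[\bar\phi].
\]

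Next I would prove $\Phi$ surjective. This is the substantive point: it says every orientation-preserving outer automorphism of the orbifold surface group is \emph{geometric}, i.e.\ induced by a self-homeomorphism of the orbifold $\mathcal{O}$ having underlying curve $C'$ and cone points $p_1,\dots,p_r$ of orders $m_1,\dots,m_r$, and such a homeomorphism is precisely an element of $\Map_{g',[r]}$. In the hyperbolic case $2g'-2+\sum_i(1-\tfrac{1}{m_i})>0$ relevant to surfaces of general type, I would uniformize $\mathcal{O}$ by $\Gamma$ as a cocompact Fuchsian group, choose a characteristic torsion-free subgroup $\Pi\trianglelefteq\Gamma$ of finite index uniformizing a finite Galois cover $C\to C'$ with group $Q=\Gamma/\Pi$, restrict the given automorphism of $\Gamma$ to $\Pi=\pi_1(C)$, realize it by a self-homeomorphism of $C$ via the classical Dehn--Nielsen--Baer theorem for the closed surface, and check (using uniqueness up to isotopy) that this homeomorphism can be taken compatibly with the $Q$-action and hence descends to $\mathcal{O}$. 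Alternatively, and more in the spirit of this section, one can compute $\Phi$ on the explicit generators of $\Map_{g',[r]}$ recalled above — the $3g'-1$ Dehn twists, the $2rg'$ $\xi$-twists and the $r-1$ half-twists — as explicit Hurwitz-type automorphisms of $\Gamma$, and verify these generate $\Out^+(\Gamma)$.

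For injectivity, if $[\phi]\in\ker\Phi$ then $\bar\phi$ is inner on $\Gamma$; composing $\phi$ with a diffeomorphism isotopic to the identity that drags $p_0$ around a loop realizing the conjugating element, we may assume $\phi_*$ induces the identity on $\Gamma$. One then upgrades this to $[\phi]=1$ in $\Map_{g',[r]}$ — i.e.\ applies Dehn--Nielsen--Baer to $\mathcal{O}$ itself, again by going up the finite cover $C\to C'$: a homeomorphism of $C$ inducing the identity on $\pi_1(C)$ up to conjugacy is isotopic to the identity, and choosing the isotopy $Q$-equivariantly lets it descend to an isotopy of $\phi$ to the identity on $\mathcal{O}$.

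The main obstacle is the surjectivity (realization) step: that every element of $\Out^+(\Gamma)$ is induced by an honest orbifold homeomorphism is a Dehn--Nielsen--Baer / Nielsen realization statement, and the torsion coming from the cone points makes it genuinely more delicate than the classical closed or punctured surface case, so some input from Fuchsian uniformization (or a careful equivariant combinatorial argument on curve systems) seems unavoidable; the finitely many non-hyperbolic signatures, where $\mathcal{O}$ is spherical or Euclidean, must be dealt with separately but are elementary. A secondary point to be pinned down is the meaning of $\Map_{g',[r]}$ when the $m_i$ are not all equal: for $\Phi$ even to be defined one restricts to diffeomorphisms of $C'\setminus\mathcal{B}$ permuting the punctures \emph{compatibly with their orbifold orders}, exactly as in the definition of $\Aut^+(\Gamma)$, which forces $\mathrm{ord}(\gamma_i)=\mathrm{ord}(\gamma_j)$ whenever $\eta(\gamma_i)$ is conjugate to $\gamma_j$.
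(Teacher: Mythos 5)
The paper does not actually prove Theorem~\ref{out}: it records it as a classical fact and refers to Maclachlan \cite[\S 4]{Macl}, whose argument runs through Teichm\"uller theory --- one realizes $\Gamma$ as a cocompact Fuchsian group, identifies $\Out^+(\Gamma)$ with the modular group acting on the Teichm\"uller space $T(\Gamma)$, and compares this with the Teichm\"uller space of the $r$-marked genus-$g'$ surface. Your outline is a different (and equally standard) route: the Dehn--Nielsen--Baer strategy of building the map $\Phi\colon\Map_{g',[r]}\to\Out^+(\Gamma)$ directly and proving bijectivity by passing to a torsion-free characteristic finite-index subgroup $\Pi\le\Gamma$, applying the classical DNB theorem on the closed cover $C$, and descending. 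The easy half (well-definedness of $\Phi$, landing in $\Aut^+$, independence of basepoint and isotopy) is correct and complete as you state it. The substantive half is, as you yourself flag, only a plan: in the surjectivity step the delicate point is not just realizing $\bar\phi|_\Pi$ by some homeomorphism of $C$ but choosing it \emph{equivariantly} for the $Q=\Gamma/\Pi$-action --- DNB uniqueness only gives you that $f\circ q$ and $\theta(q)\circ f$ are isotopic for each $q\in Q$, and upgrading ``isotopic for each $q$'' to an honest equivariant $f$ is essentially a Nielsen-realization issue; the cleanest fix is to work directly with the Fuchsian action on $\mathbb{H}^2$ (realize the new embedding $\Gamma\hookrightarrow\PSL_2(\RR)$ and conjugate the two actions by a homeomorphism of $\mathbb{H}^2$), which is in effect what the cited source does. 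Your closing remark about unequal periods is well taken: since conjugation preserves order in $\Gamma$, every element of $\Aut^+(\Gamma)$ permutes the $c_i$ within order-classes, while $\Map_{g',[r]}$ permutes all $r$ points freely, so the isomorphism as literally stated requires either all $m_i$ equal or the convention that the marked points are coloured by their branching orders; this caveat is implicit but not spelled out in the paper. In short: your approach is sound and genuinely different from the cited one, but the realization step would need the equivariance issue resolved (or replaced by the uniformization argument) before it is a proof.
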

This is a classical result cf. e.g., \cite[\S 4]{Macl} .

Moreover let $G$ be a finite group $(g' \mid
m_1,\dots,m_r)-$generated. There is a section $s: \Out^+(\Gamma)
\rightarrow \Aut^+(\Gamma)$, which induces an action of the
$\Map_{g',[r]}$ on the generators of $\Gamma$. Such action does
not depend on $s$ up to simultaneous conjugation, meaning that the
action is defined up to inner automorphisms. This action induces
an action on the systems of generators of $G$ via composition with
admissible epimorphisms.

\begin{defin}\label{humov} Let $G$ be a finite group $(g'
\mid m_1,\dots,m_r)-$generated. If two systems of generators
$\mathcal{V}_1$ and $\mathcal{V}_2$ are in the same
$\Map_{g',[r]}$-orbit, we say that they are related by a
\emph{Hurwitz move}  (or are \emph{Hurwitz equivalent}).
\end{defin}

\begin{prop}\label{prop_Hurwitzmoves}
Let $C'$ be a curve of genus $g'$, $\mathcal{B}:=\{p_1,\ldots,p_r\}$, and with $g' \neq 0$ and $g'>1$ or $r>1$.
Up to inner automorphisms, the action of $\Map_{g',[r]}$ on
$\Gamma(g'\;| \; m_1 \ldots m_r)$ is induced by the following action on a geometric basis of $\pi_1(C'\setminus \mathcal{B}, p_0)$

\[
\mathbf{t_{\delta_j}}: \left\{
\begin{array}{rll}
\alpha_j & \mapsto \alpha_j\beta^{-1}_j & \\
\alpha_i & \mapsto \alpha_i             & \forall i \neq j \\
\beta_i  & \mapsto  \beta_i             & \forall i\\
\gamma_i & \mapsto \gamma_i             & \forall i\\
\end{array}
\right.
\mbox{   }
\mathbf{t_{\tilde{\delta_j}}}: \left\{
\begin{array}{rll}
\alpha_i & \mapsto\alpha_i           & \forall i \\
\beta_j  & \mapsto  \beta_j \alpha_j & \\
\beta_i  & \mapsto \beta_i           & \forall i \neq j\\
\gamma_i & \mapsto\gamma_i           & \forall i\\
\end{array}
\right.
\]
\[
\mathbf{t_{\sigma_h}}: \left\{
\begin{array}{rll}
\alpha_i & \mapsto \alpha_i                      & \forall i\\
\beta_i  & \mapsto  \beta_i                      & \forall i\\
\gamma_h & \mapsto \gamma_{h+1}                  & \\
\gamma_{h+1} & \mapsto \gamma^{-1}_{h+1}\gamma_h\gamma_{h+1}   & \\
\gamma_i & \mapsto\gamma_i           & \forall i \neq h, h+1\\
\end{array}
\right.
\mbox{   }
\mathbf{t_{\tau_k}}: \left\{
\begin{array}{rll}
\alpha_k & \mapsto\alpha_k \eta^{-1}_{k}          & \\
\beta_k  & \mapsto  \beta_k^{\eta_k}              & \\
\alpha_{k+1} & \mapsto \eta_{k}\alpha_k           & \\
\alpha_i & \mapsto\alpha_i           & \forall i \neq k,k+1 \\
\beta_i  & \mapsto \beta_i           & \forall i \neq k\\
\gamma_i & \mapsto\gamma_i           & \forall i\\
\end{array}
\right.
\]
\[
\mathbf{t}_{\xi^1_{j,d}}: \left\{
\begin{array}{rll}
\alpha_j & \mapsto \chi_{j,d}\alpha_j & \\
\alpha_i & \mapsto \alpha_i         & \forall i \neq j \\
\beta_i  & \mapsto \beta_i           & \forall i\\
\gamma_d  & \mapsto     \gamma^{\epsilon_{j,d}}_d &\\
\gamma_i & \mapsto\gamma_i           & \forall j \neq d\\
\end{array}
\right.
\mbox{   }
\mathbf{t}_{\xi^2_{j,d}}: \left\{
\begin{array}{rll}
\alpha_i & \mapsto \alpha_i         & \forall i  \\
\beta_j  & \mapsto  \alpha^{-1}_j\chi_{j,d}\alpha_j\beta_j & \\
\beta_i  & \mapsto \beta_i           & \forall i \neq j\\
\gamma_d  & \mapsto     \gamma^{\epsilon'_{j,d}}_d &\\
\gamma_i & \mapsto\gamma_i           & \forall j \neq d\\
\end{array}
\right.
\]
for $1 \leq j \leq g'$, $1 \leq k \leq (g'-1)$, $1 \leq h \leq (r-1)$, and $1 \leq d \leq r$.
Moreover we set $\eta_k:=\beta^{-1}_k\alpha_{k+1}\beta_{k+1}\alpha^{-1}_{k+1}$, $\chi_{j,d}:=(\Pi^{j-1}_{k=1}[\alpha_k, \beta_k])^{-1} \gamma_d         \Pi^{j-1}_{k=1}[\alpha_k, \beta_k]$, \\
$\epsilon_{j,d}:=\gamma_d(\Pi^{j}_{k=1}[\alpha_k,
\beta_k])\alpha_j\beta_j \alpha^{-1}_j(\Pi^{j}_{k=1}[\alpha_k,
\beta_k])^{-1}$, and
$\epsilon'_{j,d}:=\gamma_d(\Pi^{j}_{k=1}[\alpha_k, \beta_k])
\alpha^{-1}_j(\Pi^{j}_{k=1}[\alpha_k, \beta_k])^{-1}$.
\end{prop}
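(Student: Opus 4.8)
The plan is to compute the effect of each generator of $\Map_{g',[r]}$ on the fixed geometric basis $\gamma_1,\dots,\gamma_r,\alpha_1,\beta_1,\dots,\alpha_{g'},\beta_{g'}$ of $\pi_1(C'\setminus\mathcal{B},p_0)$ from Figure 6, and then to transport the answer along the canonical epimorphism $\pi_1(C'\setminus\mathcal{B},p_0)\twoheadrightarrow\Gamma(g'\mid m_1,\dots,m_r)$. By Theorem~\ref{out} together with the section $s\colon\Out^+(\Gamma)\to\Aut^+(\Gamma)$ recalled above, it is enough to produce, for each generator in the list of \cite[Theorem 3]{B} — valid precisely under the hypothesis $g'\neq0$ and ($g'>1$ or $r>1$) imposed in the statement — one automorphism of $\pi_1(C'\setminus\mathcal{B},p_0)$ lifting it; the resulting formulas are then canonical up to simultaneous conjugation, i.e.\ up to the $\Inn(\Gamma)$-indeterminacy allowed in the statement.

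Each computation is the familiar ``change of coordinates'' / picture chasing for surface twists. If $c$ is the simple closed curve supporting a Dehn twist and $\delta\in\pi_1(C'\setminus\mathcal{B},p_0)$ is the class it represents once pushed to $p_0$ along a chosen arc, then a basis loop meeting $c$ transversally in one point acquires a single insertion of $\delta^{\pm1}$ (the sign and the side dictated by the positivity conventions of Section~\ref{sec.BraidGroup}), while a basis loop disjoint from the support is unchanged. Thus: $\delta_j$ is the curve meeting only $\alpha_j$, carrying the class $\beta_j$, so $\mathbf{t}_{\delta_j}$ modifies only $\alpha_j$, by right multiplication with $\beta_j^{-1}$; symmetrically $\tilde{\delta_j}$ meets only $\beta_j$, carries $\alpha_j$, and yields $\beta_j\mapsto\beta_j\alpha_j$. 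The half-twists $\sigma_h$ act by the classical Artin formula on the punctures, $\gamma_h\mapsto\gamma_{h+1}$, $\gamma_{h+1}\mapsto\gamma_{h+1}^{-1}\gamma_h\gamma_{h+1}$, fixing the handle generators. For $\mathbf{t}_{\tau_k}$ one identifies the class carried by $\tau_k$, which links the $k$-th and $(k+1)$-st handles, as $\eta_k=\beta_k^{-1}\alpha_{k+1}\beta_{k+1}\alpha_{k+1}^{-1}$, whence the stated formula. Finally, by the Birman sequence \eqref{eq.Birman} the $\xi$-twists $\mathbf{t}_{\xi^l_{j,d}}$ are the images under $\Xi$ of point-pushing of $p$ along loops linking the $j$-th handle with $p_d$; pushing $p$ past the first $j-1$ handles forces the loop around $p_d$ to be read as $\chi_{j,d}=(\prod_{k=1}^{j-1}[\alpha_k,\beta_k])^{-1}\gamma_d\prod_{k=1}^{j-1}[\alpha_k,\beta_k]$, which is the element inserted in $\alpha_j$ (for $l=1$) or, after the evident correction, in $\beta_j$ (for $l=2$); the conjugation induced on $\gamma_d$ is bookkept along the same arc and gives the exponents $\epsilon_{j,d}$, $\epsilon'_{j,d}$.

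Having assembled the six families of maps I would run two consistency checks. First, each must fix, up to conjugacy, the long relator $\gamma_1\cdots\gamma_r\prod_{k=1}^{g'}[\alpha_k,\beta_k]$, so that it is a genuine automorphism of $\pi_1(C'\setminus\mathcal{B},p_0)$; this is a short word computation case by case and also catches sign and placement errors. Second, each map sends every $\gamma_i$ to a conjugate of some $\gamma_j$ (to $\gamma_i$ itself except under $\sigma_h$) and has determinant $+1$ on $\langle\alpha_k,\beta_k\rangle^{\mathrm{ab}}$, hence lies in $\Aut^+$; in particular it preserves the normal subgroup generated by the $\gamma_i^{m_i}$, so it descends along $\pi_1(C'\setminus\mathcal{B},p_0)\twoheadrightarrow\Gamma(g'\mid m_1,\dots,m_r)$ to the asserted action, and the orbifold relations $c_i^{m_i}=1$ survive because $\mathrm{ord}(\gamma_i)=\mathrm{ord}(\gamma_j)$.

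The main obstacle is the middle step for $\mathbf{t}_{\tau_k}$ and for the $\xi$-twists — the basepoint corrections. The supporting curves of these generators are neither members of the geometric basis nor null-homotopic, so one must fix once and for all the auxiliary arcs joining them to $p_0$ and be scrupulous about the order in which handles and punctures are encountered; an inconsistent choice alters the answer only by an inner automorphism (harmless here), but a genuinely wrong one alters it by a map that fails to preserve the relator. Getting $\eta_k$, $\chi_{j,d}$, $\epsilon_{j,d}$ and $\epsilon'_{j,d}$ all simultaneously right — so that the six endomorphisms are automorphisms fixing the relator up to conjugacy — is where essentially all the work lies; the half-twists and the $\delta_j$, $\tilde{\delta_j}$ cases are classical and nearly free.
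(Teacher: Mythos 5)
Your approach --- computing the action of each generator in Birman's generating set of $\Map_{g',[r]}$ directly on the fixed geometric basis of $\pi_1(C'\setminus\mathcal{B},p_0)$, treating the half-twists and the twists about $\delta_j$, $\tilde{\delta_j}$ as classical, isolating the basepoint/arc bookkeeping for $\mathbf{t}_{\tau_k}$ and the $\xi$-twists as the real content, and then descending along $\pi_1(C'\setminus\mathcal{B},p_0)\twoheadrightarrow\Gamma(g'\mid m_1,\dots,m_r)$ --- is exactly the strategy of the paper's proof. The only difference is that the paper delegates the two delicate computations (the Dehn twists about the $\tau_j$ and the $\xi$-twists) to \cite{P11} and to the Appendix of \cite{CLP} instead of carrying them out, so your sketch is consistent with, and at essentially the same level of detail as, the proof given here.
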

In the above proposition the twists $\mathbf{t}_{\tilde{\delta_j}},\mathbf{t}_{\delta_j}$ and $ \mathbf{t}_{\tau_j}$ correspond to Dehn twists, $\mathbf{t}_{\mathbf{\xi}_{r_{j,d}}}$ and $\mathbf{t}_{\mathbf{\xi}_{r_{j,d}}}$ to $\xi$-twists, and finally $\mathbf{t}_{\sigma_h}$ to half-twists.

\proof One notices that a Riemann surface of genus $g'$ is a
connected sum of $g'$ tori. Then one can use the Figure 7 to
calculate the Dehn twists about the curves $\delta_j$, and similarly for the Dehn twists about the curves $\tilde{\delta_j}$ . One can use the results given in \cite{P11} to calculate the
Dehn twists about the curves $\tau_j$. In the Appendix of \cite{CLP} are
described the actions of the $\xi$-twists. Finally
the half-twists action is clear by Figure 1.
\endproof

\begin{center}
\includegraphics[totalheight=5 cm, width=15 cm]{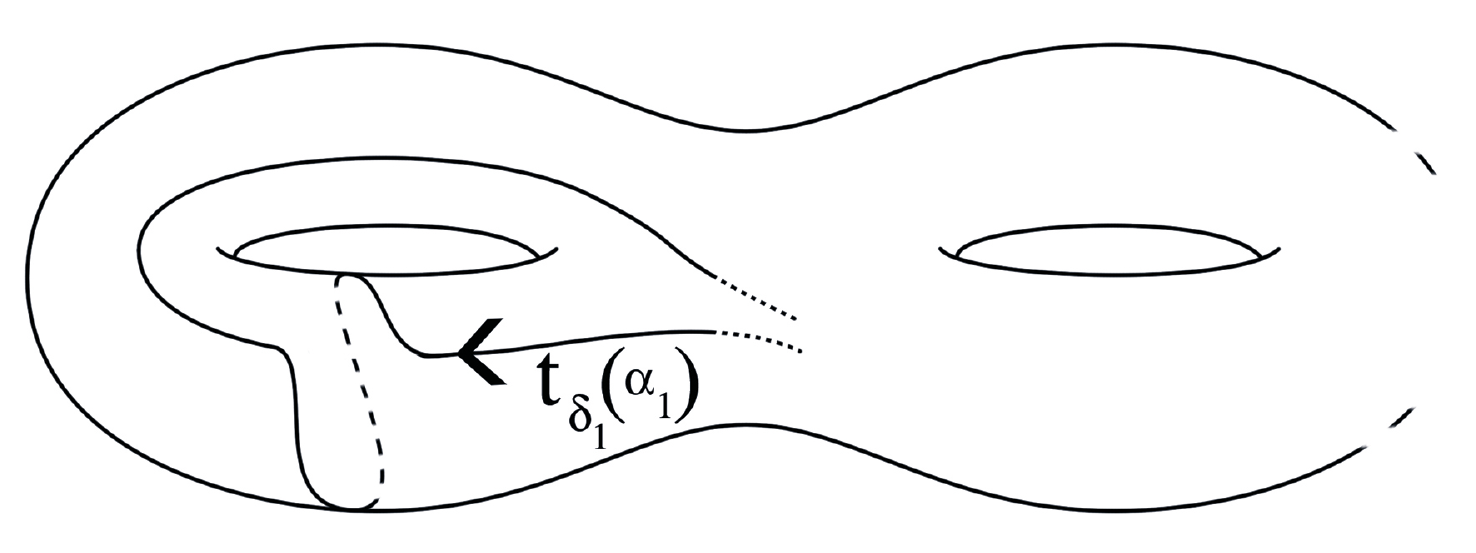} \\
Figure 7.
\end{center}

For the case $g(C')=1$ and $r=1$ see e.g., \cite{pol1} for a proof of the following proposition.
\begin{prop}\cite[Proposition 1.10]{pol1} \label{mong1n1}
Up to inner automorphisms, the action of $\Map_{1,1}$ on
$\Gamma(1\;| \; m_1)$ is induced by the following action on a geometric basis of $\pi_1(C'\setminus \mathcal{B}, p_0)$
\[
\mathbf{t_{\tilde{\delta}}}: \left\{
\begin{array}{rl}
\alpha_1 & \rightarrow \alpha_1  \\
\beta_1  & \rightarrow  \beta_1 \alpha_1 \\
\gamma_1 & \rightarrow \gamma_1 \\
\end{array}
\right.
\mbox{   }
\mathbf{t_{\delta}}: \left\{
\begin{array}{rl}
\alpha_1 & \rightarrow \alpha_1\beta^{-1}_1  \\
\beta_1  & \rightarrow  \beta_1 \\
\gamma_1 & \rightarrow \gamma_1. \\
\end{array}
\right.
\]
\end{prop}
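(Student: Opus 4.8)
\emph{Proof sketch.} The plan is to treat the once--punctured torus directly, since Proposition~\ref{prop_Hurwitzmoves} explicitly excludes the pair $(g',r)=(1,1)$. First I would fix a model: realise $C'\setminus\{p_1\}$ as a once--punctured torus, for instance the unit square with opposite sides identified and one interior point removed, and choose the geometric basis $\alpha_1,\beta_1$ of $\pi_1(C'\setminus\mathcal B,p_0)$ to be the two based edges. This group is free of rank $2$, and with a suitable choice the puncture loop is $\gamma_1=[\alpha_1,\beta_1]^{-1}$; thus the single relation $\gamma_1[\alpha_1,\beta_1]=1$ of Definition~\ref{defn.orbifold.fuchsian} is precisely the one produced by the geometric basis, and the epimorphism $\pi_1(C'\setminus\mathcal B,p_0)\twoheadrightarrow\Gamma(1\mid m_1)$ is the one imposing $c_1^{m_1}=1$.

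Next I would invoke the classical description of the mapping class group of the once--punctured torus: $\Map_{1,1}\cong\SL_2(\ZZ)$, and it is generated by the two Dehn twists $\mathbf t_\delta$, $\mathbf t_{\tilde\delta}$ about the curves $\delta,\tilde\delta$ of Figure~3 specialised to $g'=1$, where $\delta$ is isotopic to $\beta_1$ and $\tilde\delta$ to $\alpha_1$ (the two curves meeting transversally in a single point); see e.g.\ \cite{Bir}. Since $\Map_{1,1}\cong\Out^+(\Gamma(1\mid m_1))$ by Theorem~\ref{out}, it then suffices to compute the action of these two generators on the geometric basis and to push the result forward along the epimorphism above.

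For the computation I would use the standard formula for a Dehn twist $\mathbf t_c$: the image $\mathbf t_c(x)$ of a loop $x$ is obtained from $x$ by inserting a copy of $c^{\pm1}$, with sign dictated by the sign of the intersection, at each transverse crossing of $x$ with $c$. Since $\delta\simeq\beta_1$ is disjoint from $\beta_1$ and meets $\alpha_1$ exactly once, while $\gamma_1$ can be isotoped into a small neighbourhood of $p_1$ and hence made disjoint from $\delta$, one reads off $\mathbf t_\delta\colon\alpha_1\mapsto\alpha_1\beta_1^{-1}$, $\beta_1\mapsto\beta_1$, $\gamma_1\mapsto\gamma_1$; symmetrically $\mathbf t_{\tilde\delta}\colon\beta_1\mapsto\beta_1\alpha_1$, $\alpha_1\mapsto\alpha_1$, $\gamma_1\mapsto\gamma_1$. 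The fixing of $\gamma_1$ is in any case automatic: both twists are supported in the interior away from $p_1$, so they fix any loop around the puncture, and moreover any automorphism preserving $\alpha_1,\beta_1$ preserves $[\alpha_1,\beta_1]$. Finally each twist acts on $\langle\alpha_1,\beta_1\rangle^{\mathrm{ab}}$ by a matrix in $\SL_2(\ZZ)$ and sends $\gamma_1$ to itself, hence lies in $\Aut^+(\Gamma(1\mid m_1))$; composing with $\pi_1(C'\setminus\mathcal B,p_0)\twoheadrightarrow\Gamma(1\mid m_1)$ yields the stated action, well defined up to $\Inn(\Gamma(1\mid m_1))$.

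The main obstacle is purely one of conventions: orienting the curves in Figures~3 and~7 so that the exponents come out as $\alpha_1\beta_1^{-1}$ and $\beta_1\alpha_1$ rather than their variants, and checking that the identification $\delta\leftrightarrow\beta_1$, $\tilde\delta\leftrightarrow\alpha_1$ is compatible with the normalisation used for the general rows $\mathbf t_{\delta_j},\mathbf t_{\tilde\delta_j}$ of Proposition~\ref{prop_Hurwitzmoves} (whose $g'=1$ shadow must coincide with the formulas here). Since this computation is already in the literature in the required normalisation, I would also simply cite \cite[Proposition~1.10]{pol1}, as the statement does. \qed
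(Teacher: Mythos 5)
Your argument is correct, but it is worth noting that the paper does not actually prove this proposition at all: it simply points to \cite[Proposition 1.10]{pol1} (``For the case $g(C')=1$ and $r=1$ see e.g.\ \cite{pol1}''), exactly as you offer to do in your last sentence. What you supply in addition is a self-contained derivation, and it is sound: the once-punctured torus has free $\pi_1$ on $\alpha_1,\beta_1$ with puncture loop $\gamma_1=[\alpha_1,\beta_1]^{-1}$, the group $\Map_{1,1}\cong\SL_2(\ZZ)$ is generated by the two Dehn twists about curves isotopic to $\alpha_1$ and $\beta_1$, and the insertion formula gives $\alpha_1\mapsto\alpha_1\beta_1^{-1}$ (resp.\ $\beta_1\mapsto\beta_1\alpha_1$) while fixing the other generator; one checks directly that $[\alpha_1\beta_1^{-1},\beta_1]=[\alpha_1,\beta_1]$, so $\gamma_1$ is fixed on the nose and the automorphism descends to $\Gamma(1\mid m_1)$ (it preserves the normal closure of $c_1^{m_1}$) and lies in $\Aut^+$. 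Your sanity check that these formulas are the $j=1$ shadow of the rows $\mathbf t_{\delta_j},\mathbf t_{\tilde\delta_j}$ in Proposition~\ref{prop_Hurwitzmoves} is the right consistency test for the sign conventions, which, as you say, are the only delicate point. In short: the paper buys brevity by outsourcing the computation to Polizzi; your route buys a verifiable normalisation at the cost of half a page. Either is acceptable here.
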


Finally we have.

\begin{prop}\label{prop braidaction} Up to inner automorphism, the action of
$\Map_{0,[r]}$ on $\Gamma (0 \mid m_1, \ldots, m_r)$ is induced by the following action on a geometric basis of $\pi_1(\mathbb{P}^1\setminus \mathcal{B}, p_0)$
\[
\sigma_i: \left\{
\begin{array}{rl} \gamma_i & \rightarrow \gamma_{i+1} \\
\gamma_{i+1}  & \rightarrow \gamma^{-1}_{i+1}\gamma_i\gamma_{i+1} \\
\gamma_j  & \rightarrow \gamma_{j} \textrm{ if } j \neq i, i+1,
\end{array}
\right.
\]
for $i=1, \ldots, r-1$.
\end{prop}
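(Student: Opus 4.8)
The plan is to derive Proposition~\ref{prop braidaction} as the genus-zero specialization of the machinery already assembled, rather than redoing the topology from scratch. First I would invoke Theorem~\ref{thm_braid}, which identifies $\Map_{0,[r]} = \pi_0(\Diff^+(\mathbb{P}^1 - \{p_1,\dots,p_r\}))$ with the braid group $\mathbf{B}_r$ on $r$ strands, with Artin generators $\sigma_1,\dots,\sigma_{r-1}$ realized by the half-twists described in the Definition following it. So it suffices to understand the effect of a single half-twist $\sigma_i$ on a geometric basis $\gamma_1,\dots,\gamma_r$ of $\pi_1(\mathbb{P}^1 - \mathcal{B}, p_0)$, and then pass to the orbifold surface group $\Gamma(0 \mid m_1,\dots,m_r)$ via the epimorphism $\pi_1(\mathbb{P}^1 - \mathcal{B}, p_0) \twoheadrightarrow \Gamma(0 \mid m_1,\dots,m_r)$ noted after Definition~\ref{defn.orbifold.fuchsian}; since the half-twist moves are the same formulas already occurring in Proposition~\ref{prop_Hurwitzmoves} for the twists $\mathbf{t}_{\sigma_h}$ on the $\gamma$-part of a basis, the action descends.

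The core computation is the classical Hurwitz move. I would fix the standard picture: $p_1,\dots,p_r$ arranged on the real axis (say at the integers $1,\dots,r$), the base point $p_0$ below them, and $\gamma_k$ the loop that runs from $p_0$ up to a small circle around $p_k$, encircles it counterclockwise, and returns, with the $\gamma_k$ pairwise disjoint except at $p_0$ and ordered so that $\gamma_1 \cdots \gamma_r = 1$ in $\pi_1$. The half-twist $\sigma_i$ is supported in a disk containing only $p_i$ and $p_{i+1}$ and interchanges them by a $180^\circ$ rotation. Tracking the images of the $\gamma_k$: for $k \neq i, i+1$ the loop $\gamma_k$ may be isotoped off the support of $\sigma_i$, so $\gamma_k \mapsto \gamma_k$. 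The loop $\gamma_i$, after the rotation, becomes a loop around the new position of $p_i$ (which is where $p_{i+1}$ was), i.e.\ $\gamma_i \mapsto \gamma_{i+1}$ with suitable orientation conventions. The loop $\gamma_{i+1}$ gets dragged around $p_i$'s old position by the rotation, yielding the conjugated loop $\gamma_{i+1} \mapsto \gamma_{i+1}^{-1}\gamma_i\gamma_{i+1}$. One then checks consistency: the product $\gamma_i\gamma_{i+1} \mapsto \gamma_{i+1}\cdot\gamma_{i+1}^{-1}\gamma_i\gamma_{i+1} = \gamma_i\gamma_{i+1}$ is preserved, so the global relation $\gamma_1\cdots\gamma_r = 1$ is respected and the action is well defined on $\Gamma(0 \mid m_1,\dots,m_r)$ up to the inner ambiguity coming from the section $s$ of Theorem~\ref{out}, exactly as in the discussion preceding Definition~\ref{humov}. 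I would either draw the analogue of Figure~1 or cite \cite[Theorem 1.11]{Bir} and \cite{catdd} for the explicit bookkeeping.

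The main obstacle is purely a matter of conventions: the precise form of the conjugation (whether one gets $\gamma_{i+1}^{-1}\gamma_i\gamma_{i+1}$ or $\gamma_i\gamma_{i+1}\gamma_i^{-1}$, and which of $\gamma_i,\gamma_{i+1}$ is conjugated) depends on the orientation of the half-twist, the orientation of the loops, the left-to-right ordering, and whether one composes maps left-to-right or right-to-left. I would pin these down once, at the start, to match the conventions of Proposition~\ref{prop_Hurwitzmoves} (so that the genus-zero formula is literally the $\mathbf{t}_{\sigma_h}$ row with the $\alpha$'s and $\beta$'s absent), and then the verification that the braid relations $\sigma_i\sigma_{i+1}\sigma_i = \sigma_{i+1}\sigma_i\sigma_{i+1}$ and $\sigma_i\sigma_j = \sigma_j\sigma_i$ for $|i-j| \ge 2$ are satisfied by these substitutions is a short direct check, which also confirms internal consistency with Theorem~\ref{thm_braid}. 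Since the statement is essentially a known normalization lemma, I would keep the proof to this outline plus a reference, in the same spirit as the proof of Proposition~\ref{prop_Hurwitzmoves}.
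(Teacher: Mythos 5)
The paper gives no proof of Proposition~\ref{prop braidaction} at all: it is stated as a classical fact, and the corresponding half-twist computation in the proof of Proposition~\ref{prop_Hurwitzmoves} is dismissed as ``clear by Figure 1.'' Your outline --- identify $\Map_{0,[r]}$ with $\mathbf{B}_r$ via Theorem~\ref{thm_braid}, track a geometric basis under a single half-twist, check that $\gamma_i\gamma_{i+1}$ is preserved so the action descends to $\Gamma(0\mid m_1,\dots,m_r)$, and fix conventions to match the $\mathbf{t}_{\sigma_h}$ row of Proposition~\ref{prop_Hurwitzmoves} --- is exactly the standard argument the paper is implicitly relying on, and it is correct.
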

%
%
%
%
%

\section{Surfaces Isogenous to a Product of Curves}\label{sec.SurfIsoProd}

A surface $S$ is said to be \emph{isogenous to a (higher) product of
curves} if and only if $S$ is a quotient $S:=(C_1 \times C_2)/G$, where
$C_1$ and $C_2$ are curves of genus at least two, and $G$ is a finite
group acting freely on $C_1 \times C_2$.

Let $S$ be a surface isogenous to a higher product, and
$G^{\circ}:=G \cap(Aut(C_1) \times Aut(C_2))$. Then $G^{\circ}$ acts
on the two factors $C_1$ and $C_2$ and diagonally on the product $C_1
\times C_2$. If $G^{\circ}$ acts faithfully on both curves, we say
that $S= (C_1 \times C_2)/G$ is a \emph{minimal realization} of  $S$.
In \cite{cat00}, the author proves that any surface isogenous to a higher
product admits a unique minimal realization. From now on, we
work only with minimal realizations.

There are two cases: the \emph{mixed} case where the action of $G$
exchanges the two factors (in this case $C_1$ and $C_2$ are isomorphic
and $G^{\circ} \neq G$); the \emph{unmixed} case (where
$G=G^{\circ}$, and therefore it acts diagonally).

Moreover, we observe that a surface isogenous to a product
of curves is of general type. It is always minimal and its
numerical invariants are explicitly given in terms of the genera
of the curves and the order of the group. Indeed, we have the
following proposition.
\begin{prop} Let $S=(C_1 \times C_2)/G$ be a surface isogenous to a higher product of curves, then:
\begin{equation}\label{eq.chi.isot.fib}
\chi(S)=\frac{(g(C_1)-1)(g(C_2)-1)}{|G|},
\quad
e(S)=4 \chi(S),
\quad
K^2_S=8 \chi(S).
\end{equation}
The irregularity of these surfaces is easily computed by
\begin{equation} q(S)=g(C_1/G)+g(C_2/G).
\end{equation}
\end{prop}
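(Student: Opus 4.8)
The plan is to compute each invariant directly from the geometry of the quotient map $C_1 \times C_2 \to S$, using that $G$ acts freely so that this is an \'etale cover of degree $|G|$. First I would handle $\chi(S)$. Since the projection $C_1 \times C_2 \to S$ is unramified of degree $|G|$, topological (or holomorphic) Euler characteristics multiply: $e(C_1 \times C_2) = |G| \cdot e(S)$, hence also $\chi(\oo_{C_1 \times C_2}) = |G| \cdot \chi(\oo_S)$ by Noether's formula applied to both sides (or simply because $\chi$ is multiplicative in \'etale covers). By the K\"unneth formula $\chi(\oo_{C_1 \times C_2}) = \chi(\oo_{C_1})\chi(\oo_{C_2}) = (1 - g(C_1))(1 - g(C_2)) = (g(C_1)-1)(g(C_2)-1)$, which gives the stated formula for $\chi(S)$.

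Next I would treat $e(S)$ and $K^2_S$. For $e(S)$: again by multiplicativity of the topological Euler number in \'etale covers, $e(C_1 \times C_2) = |G| \cdot e(S)$, and $e(C_1 \times C_2) = e(C_1)e(C_2) = (2 - 2g(C_1))(2 - 2g(C_2)) = 4(g(C_1)-1)(g(C_2)-1) = 4|G|\chi(S)$, so $e(S) = 4\chi(S)$. For $K^2_S$: the canonical bundle of a product splits as $K_{C_1 \times C_2} = p_1^* K_{C_1} \otimes p_2^* K_{C_2}$, so $K_{C_1 \times C_2}^2 = 2 \deg K_{C_1} \cdot \deg K_{C_2} = 2(2g(C_1)-2)(2g(C_2)-2) = 8(g(C_1)-1)(g(C_2)-1)$. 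Since the quotient map is \'etale, $K_{C_1 \times C_2} = \pi^* K_S$ (no ramification divisor), hence $K_{C_1 \times C_2}^2 = |G| \cdot K_S^2$, giving $K_S^2 = 8\chi(S)$. As a consistency check, Noether's formula $12\chi(S) = K_S^2 + e(S) = 8\chi(S) + 4\chi(S)$ holds automatically.

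Finally, for the irregularity $q(S) = h^0(S, \Omega^1_S) = h^1(S, \oo_S)$, I would use that pullback of $1$-forms identifies $H^0(S, \Omega^1_S)$ with the $G$-invariant part $H^0(C_1 \times C_2, \Omega^1_{C_1 \times C_2})^G$. By K\"unneth, $H^0(C_1 \times C_2, \Omega^1_{C_1 \times C_2}) = H^0(C_1,\Omega^1_{C_1}) \oplus H^0(C_2,\Omega^1_{C_2})$, and in the unmixed case this decomposition is $G$-stable, so $q(S) = \dim H^0(C_1,\Omega^1_{C_1})^G + \dim H^0(C_2,\Omega^1_{C_2})^G = g(C_1/G) + g(C_2/G)$, the last equality because $H^0(C_i, \Omega^1_{C_i})^G \cong H^0(C_i/G, \Omega^1_{C_i/G})$ for the quotient curve (pullback of forms along $C_i \to C_i/G$ lands exactly in the invariants). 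In the mixed case $G^\circ$ has index $2$ in $G$, and one argues that a $G$-invariant form is in particular $G^\circ$-invariant, of the form $\omega_1 \oplus \omega_2$ with $\omega_i \in H^0(C_i,\Omega^1)^{G^\circ}$, and the extra involution swaps the two summands, so the invariants have dimension $g(C_1/G^\circ) = g(C_2/G^\circ) = g(C_1/G)$; one checks $g(C_1/G) = g(C_2/G)$ in this case and the formula again reads $q(S) = g(C_1/G) + g(C_2/G)$.

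The only genuinely delicate point is the multiplicativity of $\chi(\oo)$, $e$, and the behavior of the canonical class under the \'etale quotient — but all of these are standard for finite \'etale (equivalently, unramified free-action) covers and require no real work; one could alternatively deduce $\chi(\oo_S)$ from $e(S)$ via Noether once $e(S)$ and $K_S^2$ are known, making the argument slightly more self-contained. The mapping-class-group and Hurwitz-move machinery of Section~\ref{sec.BraidGroup} is not needed here; it enters only later when counting connected components of moduli. I expect the identification $H^0(C_i,\Omega^1_{C_i})^G \cong H^0(C_i/G, \Omega^1_{C_i/G})$ to be the step most worth stating carefully, since it is where the group action genuinely interacts with the geometry.
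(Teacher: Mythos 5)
Your argument is correct, and it is the standard one: the paper itself states this proposition without proof (it is quoted from Catanese's work, cf.\ \cite{cat00}), and what you write is exactly the expected derivation --- multiplicativity of $e$, $\chi(\oo)$ and $K^2$ under the free (hence \'etale) degree-$|G|$ quotient, K\"unneth on the product, and identification of $H^0(\Omega^1_S)$ with the $G$-invariant $1$-forms upstairs together with $H^0(C_i,\Omega^1_{C_i})^G\cong H^0(C_i/G,\Omega^1_{C_i/G})$. The only point I would flag is your last paragraph on the mixed case: there $C_1\cong C_2=:C$ and $G$ does not act on the individual factors, so ``$C_i/G$'' must be read as $C/G^{\circ}$, and the swap involution on $H^0(C,\Omega^1_C)^{G^\circ}\oplus H^0(C,\Omega^1_C)^{G^\circ}$ leaves an invariant subspace of dimension $g(C/G^{\circ})$, not $2g(C/G^{\circ})$; so the displayed formula for $q$ as literally written is really the unmixed-case statement (which is all the paper uses, since it restricts to unmixed type immediately afterwards). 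Everything else, including the consistency check via Noether's formula, is fine.
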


By the above formula a surface $S$ isogenous to a product of
curves has $q(S)=0$ if and only if the two quotients $C_i/G$ are
isomorphic to $\mathbb{P}^1$. Moreover, if both coverings $C_i
\rightarrow C_i/G\cong \mathbb{P}^1$ are ramified in exactly $3$
points, $S$ is a \emph{Beauville surface}. This last condition
is equivalent to saying that Beauville surfaces are rigid, i.e.,
have no nontrivial deformations.

In the unmixed case  $G$ acts separately on $C_1$ and $C_2$, and
the two projections $\pi_i \colon C_1 \times C_2 \lr C_i$ for
$i=1,2$ induce two isotrivial fibrations $\alpha_i \colon S \lr
C_i/G$ for $i=1,2$, whose smooth fibres are isomorphic to $C_2$
and $C_1$, respectively. \emph{We work only with surfaces of
unmixed type}.

Working out the definition of surfaces isogenous to a product, one
sees that there is a pure group theoretical condition which
characterizes the groups of such surfaces: the existence of a
''\emph{ramification structure}''.

\begin{defin}\label{defn.sphergen}
Let $G$ be a finite group and  $\theta_1 \colon \Gamma(g'_1 \mid
m_{1,1}, \dots , m_{1,r_1}) \twoheadrightarrow G$ an admissible
epimorphism. Let $\mathcal{V}_1$ be the system of generators of
$G$ induced by $\theta_1$, i.e., the elements of $G$ which are
images of the generators of $\Gamma$. We say that $\mathcal{V}_1$
is of \emph{type} $\tau_1:=(g'_1 \mid m_{1,1}, \dots ,
m_{1,r_1})$.

Moreover, let $\theta_2 \colon \Gamma(g'_2 \mid m_{2,1}, \dots ,
m_{2,r_2}) \twoheadrightarrow G$ be another admissible epimorphism
and  $\mathcal{V}_2$ be the system of generators of $G$ induced by
$\theta_2$. Then $\mathcal{V}_1$ and $\mathcal{V}_2$ are said to
be \emph{disjoint}, if:
\begin{equation}\label{eq.sigmasetcond} \Sigma(\mathcal{V}_1)
\bigcap \Sigma(\mathcal{V}_2)= \{ 1 \},
\end{equation}
where
\[ \Sigma(\mathcal{V}_i):= \bigcup_{g \in G} \bigcup^{\infty}_{j=0} \bigcup^{r_i}_{k=1} g \cdot \theta_i(\gamma_k)^j \cdot
g^{-1}.
\]
\end{defin}

\begin{defin} Let $\tau_i:=(g'_i \mid m_{1,i}, \dots , m_{r_i,i})$ for $i=1,2$ be two types.  An \emph{unmixed ramification structure} of type $(\tau_1,\tau_2)$
for a finite group $G$, is a
pair $(\mathcal{V}_1,\mathcal{V}_2)$
of disjoint systems of generators of $G$, whose types are
$\tau_i$, and they satisfy:
\begin{equation}\label{eq.Rim.Hur.Condition}
\mathbb{Z} \ni \frac{|G|
(2g'_i-2+\sum^{r_i}_{l=1}(1-\frac{1}{m_{i,l}}))}{2}+1 \geq 2,
\end{equation}
for $i=1,2$.
\end{defin}
We shall denote by $\mathcal{U}(G;\tau_1,\tau_2)$ the set of all pairs $(\mathcal{V}_1,\mathcal{V}_2)$ of disjoint
systems of generators of unordered type $(\tau_1,\tau_2)$. Here \emph{unordered type} $\tau$ means that there is a permutation $\sigma \in \mathfrak{S}_r$ such that:
${\rm ord}(c_{1}) = m_{\sigma(1)},\dots, {\rm ord}(c_r) = m_{\sigma(r)}$.
We obtain that the datum of a surface isogenous to a higher
product of unmixed type $S=(C_1 \times C_2)/G$  is determined,
looking at the monodromy of each covering of $C_i/G$, by the
datum of a finite group $G$ together with an unmixed ramification
structure. The condition \eqref{eq.sigmasetcond} ensures that the
action of $G$ on the product of the two curves $C_1 \times C_2$ is
free. We remark here that this can be specialized to $C_i/G \cong
\mathbb{P}^1$ in order to obtain regular surfaces isogenous to a product. In this case condition \eqref{eq.Rim.Hur.Condition} is automaticaly satisfied, see \cite[Lemma 2.4]{GP}.
Moreover, we can also ask $r_i=3$, and therefore we obtain
Beauville surfaces, in this case the ramification structure of $G$
is called a \emph{Beauville ramification structure}.

\begin{rem} Note that a group $G$ and an unmixed ramification structure
(or equivalently a Beauville structure) determine the main
invariants of the surface $S$. Indeed, by \eqref{eq.chi.isot.fib} and~\eqref{eq.RiemHurw} we obtain:
\begin{equation}\label{eq.pginfty}
4\chi(S)=|G|\cdot\left({2g'_1-2+\sum^{r_1}_{k=1}(1-\frac{1}{m_{1,k}})}\right)
\cdot\left({2g'_2-2+\sum^{r_2}_{k=1}(1-\frac{1}{m_{2,k}})}\right),
\end{equation}
\end{rem}
and so, in the Beauville case,
\[
    4\chi(S)=4(1+p_g)=|G|(1-\mu_1)(1-\mu_2),
\]
where
\begin{equation}
\label{eq.RHtre} \mu_i:=
\frac{1}{m_{1,i}}+\frac{1}{m_{2,i}}+\frac{1}{m_{3,i}}, \quad
(i=1,2).
\end{equation}

The most important property of surfaces isogenous to a product is their weak rigidity property.
\begin{theo}~\cite[Theorem 3.3, Weak Rigidity Theorem]{cat04}
Let $S=(C_1 \times C_2)/G$ be a surface isogenous to a higher
product of curves. Then every surface with the same
\begin{itemize}
\item topological Euler number and
\item fundamental group
\end{itemize}
is diffeomorphic to $S$. The corresponding  moduli space
$\mathcal{M}^{top}(S) = \mathcal{M}^{diff}(S)$ of surfaces
(orientedly) homeomorphic (resp. diffeomorphic) to $S$ is either
irreducible and connected or consists of two irreducible connected
components exchanged by complex conjugation.
\end{theo}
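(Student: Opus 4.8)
The plan is to split the statement into two parts: the \emph{differentiable rigidity}, namely that every compact complex surface $S'$ with $e(S')=e(S)$ and $\pi_1(S')\cong\pi_1(S)$ is orientedly diffeomorphic to $S$; and the description of $\mathcal{M}^{top}(S)=\mathcal{M}^{diff}(S)$ as one or two irreducible connected components. The common thread is that all the combinatorial data of $S$ is encoded in the abstract group $\Pi:=\pi_1(S)$: since $G$ acts freely on $C_1\times C_2$, there is a short exact sequence $1\to \pi_{g_1}\times\pi_{g_2}\to \Pi\to G\to 1$ with $g_i=g(C_i)\geq 2$ and $\pi_{g_i}$ the genus-$g_i$ surface group. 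First I would argue that the subgroup $N:=\pi_{g_1}\times\pi_{g_2}$ is intrinsic to $\Pi$ up to finite ambiguity (e.g. it is, up to finite index, the unique normal subgroup of $\Pi$ isomorphic to a direct product of two surface groups of genus $\geq 2$), so that $G$, the unordered pair $(g_1,g_2)$, and the conjugation action of $G$ on $N$ are determined by $\Pi$.

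Next, given $S'$ with $\pi_1(S')\cong\Pi$ and $e(S')=e(S)$, I would pass to the unramified $G$-cover $\widehat{S'}\to S'$ associated with $N\leq\Pi\cong\pi_1(S')$, so that $\pi_1(\widehat{S'})\cong \pi_{g_1}\times\pi_{g_2}$ and $e(\widehat{S'})=|G|\,e(S')=|G|\,e(S)=e(C_1\times C_2)=4(g_1-1)(g_2-1)$. The heart of the matter is then the rigidity lemma: a compact Kähler surface $X$ with $\pi_1(X)\cong \pi_{g_1}\times\pi_{g_2}$ and $e(X)=4(g_1-1)(g_2-1)$ is biholomorphic to a product $C_1'\times C_2'$ of curves with $g(C_i')=g_i$. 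To prove it I would first note that $X$ is Kähler (as $b_1(\Pi)$ is even) and that its invariants, forced by $\pi_1$ and $e$, leave only the minimal-general-type case in the Enriques--Kodaira classification; then feed the two projections $\pi_{g_1}\times\pi_{g_2}\to\pi_{g_i}$ into the Siu--Beauville--Catanese theorem on fibrations of compact Kähler manifolds over curves to obtain two holomorphic fibrations $f_i\colon X\to C_i'$. The product map $(f_1,f_2)\colon X\to C_1'\times C_2'$ then exhibits $X$ as an iterated fibration whose monodromy must be trivial because $\pi_1(X)$ is an honest direct product, and a degree/Euler-number count against $e(X)=4(g_1-1)(g_2-1)$ pins the base genera to be exactly $g_1,g_2$ and forces the map to be a biholomorphism. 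Hence $\widehat{S'}\cong C_1'\times C_2'$ is orientedly diffeomorphic to $C_1\times C_2=\widehat{S}$; and since the $G$-action on $\pi_1(\widehat{S'})$ coincides with that on $\pi_1(\widehat{S})$ and, for curves of genus $\geq 2$, the identification $\Map_{g}\cong\Out^+(\pi_{g})$ (Dehn--Nielsen--Baer) realizes the requisite identifications by orientation-preserving diffeomorphisms of the factors, the two free $G$-actions can be matched, so $S'=\widehat{S'}/G$ is orientedly diffeomorphic to $S=\widehat{S}/G$.

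For the moduli statement, I would first record that deformations of $S=(C_1\times C_2)/G$ are unobstructed: a deformation of $S$ is the same as a $G$-equivariant deformation of $C_1\times C_2$, i.e. a pair of deformations of the quotient morphisms $C_i\to C_i/G$, which amount to moving branch points and are unobstructed. Hence every component of $\mathcal{M}$ meeting the locus of surfaces isogenous to a product is smooth, and the locus of such surfaces with fixed $G$ and fixed unordered type $(\tau_1,\tau_2)$ of the ramification structure is the image of the connected space $\mathcal{T}_{g_1',[r_1]}\times\mathcal{T}_{g_2',[r_2]}$ (a product of Teichm\"uller spaces with marked points), hence irreducible and connected. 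By the differentiable rigidity just established, every surface homeomorphic to $S$ arises this way, with the same $G$ and the same unordered type, so $\mathcal{M}^{top}(S)$ is a union of finitely many such irreducible pieces. The final ``one or two'' dichotomy comes from analysing the residual ambiguity in the identification $\widehat{S'}\cong C_1'\times C_2'$: composing it with complex conjugation on a single factor, or with the flip exchanging the factors when $C_1\cong C_2$, changes the isotopy class of the equivariant identification but not the oriented diffeomorphism type of $\widehat{S'}$, and on the level of ramification structures this is exactly the operation induced by replacing $S$ by $\overline{S}$. Tracking it through shows that $\mathcal{M}^{top}(S)$ is either a single irreducible component, or two such components interchanged by complex conjugation.

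\textbf{Main obstacle.} The hard part will be the rigidity lemma: showing that a compact Kähler surface with fundamental group a product of two surface groups and Euler number $4(g_1-1)(g_2-1)$ really is the product of two curves of genera $g_1,g_2$. This needs genuine geometric input --- the Siu--Beauville--Catanese fibration theorem, plus the argument that the resulting map to the product of base curves is a biholomorphism rather than merely dominant and that the base genera are forced --- whereas recovering $G$ and the genera from $\pi_1$ and the Teichm\"uller-theoretic connectedness of the pieces are comparatively formal. A secondary delicate point is the orientation bookkeeping involving complex conjugation that yields the sharp ``at most two components'' bound.
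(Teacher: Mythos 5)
The paper does not prove this statement: it is quoted verbatim from Catanese's work (\cite[Theorem 3.3]{cat04}, building on \cite{cat00}), so there is no internal proof to compare against. That said, your sketch is essentially a faithful reconstruction of Catanese's actual argument: recover the extension $1\to\pi_{g_1}\times\pi_{g_2}\to\Pi\to G\to 1$ from $\Pi$ alone, pass to the associated \'etale $G$-cover, prove the rigidity lemma that a compact surface with $\pi_1\cong\pi_{g_1}\times\pi_{g_2}$ and $e=4(g_1-1)(g_2-1)$ is biholomorphic to a product (via the Siu--Beauville--Catanese fibration theorem applied to the two projections, the Zeuthen--Segre/Arakelov-type inequality to force both fibrations to be smooth with base genera exactly $g_i$, and the degree count on $(f_1,f_2)$), and then obtain the moduli statement from the connectedness of the relevant product of Teichm\"uller spaces together with the complex-conjugation ambiguity. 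You have also correctly identified the rigidity lemma as the genuinely hard geometric input.

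Two places where the sketch is thinner than the real proof deserve mention. First, ``$N$ is intrinsic to $\Pi$ up to finite ambiguity'' is not quite enough: different finite-index normal subgroups isomorphic to products of surface groups would yield different groups $G$, so one needs the precise uniqueness statement for the \emph{minimal realization} (this is a nontrivial lemma in \cite{cat00}, not a formality). Second, triviality of the outer monodromy only makes the holomorphic fibre bundle topologically trivial, not a holomorphic product; as you note, it is the map $(f_1,f_2)$ to the product of the two bases, shown to have degree one, that closes this gap --- make sure the Euler-number bookkeeping is run on that map and not on a single fibration. Neither point invalidates the strategy; both are handled in the cited source.
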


Thanks to the Weak Rigidity Theorem, we have  that the moduli space
of surfaces isogenous to a product of curves with fixed invariants
--- a finite group $G$ and a type $(\tau_1,\tau_2)$ in the unmixed
case --- consists of a finite number of irreducible connected
components of $\mathcal{M}$. More precisely, let $S$ be a surface
isogenous to a product of curves of unmixed type with group $G$
and a pair of disjoint systems of generators of type
$(\tau_1,\tau_2)$. By~$\eqref{eq.pginfty}$ we have
$\chi(S)=\chi(G,(\tau_1,\tau_2))$, and consequently,
by~\eqref{eq.chi.isot.fib}
$K^2_S=K^2(G,(\tau_1,\tau_2))=8\chi(S)$, and
$e(S)=e(G,(\tau_1,\tau_2))=4\chi(S)$. Moreover the fundamental
group of $S$ fits in the following exact sequence (cf.
\cite{cat00}):
\begin{equation*}
1 \longrightarrow \pi_1(C_1) \times \pi_2(C_2) \longrightarrow \pi_1(S) \longrightarrow G \longrightarrow 1.
\end{equation*}

Let us fix a group $G$ and a type $(\tau_1,\tau_2)$ of an unmixed
ramification structure, and denote by
$\mathcal{M}_{(G,(\tau_1,\tau_2))}$ the moduli space of
isomorphism classes of surfaces isogenous to a product of curves
of unmixed type admitting these data, then it is obviously a
subset of the moduli space
$\mathcal{M}_{K^2(G,(\tau_1,\tau_2)),\chi (G,(\tau_1,\tau_2))}$.
By the Weak Rigidity Theorem, the space
$\mathcal{M}_{(G,(\tau_1,\tau_2))}$ consists of a finite number of
irreducible connected components.

A group theoretical method to count the number of these components
is given in \cite[Theorem 1.3]{BC} in case of surfaces isogenous
to a product of curves of unmixed type with $q=0$ and $G$ abelian. The following theorem is a natural generalization.

\begin{theo}\cite[Theorem 5.7]{P11}\label{Fabmain} Let $S$ be a surface isogenous to a
product of unmixed type. Then we attach to $S$ its finite group
$G$ (up to isomorphism) and the equivalence class ramification structures
$(\mathcal{V}_1,\mathcal{V}_2)$ of type $(\tau_1,\tau_2)$ of $G$,
under the equivalence relation generated by:
\begin{enumerate}
    \item Hurwitz moves and $\Inn(G)$ on $\mathcal{V}_1$,
    \item Hurwitz moves and $\Inn(G)$ on $\mathcal{V}_2$,
    \item simultaneous conjugation of $\mathcal{V}_1$ and
    $\mathcal{V}_2$ by an element $\phi \in \Aut(G)$, i.e., we let
    $(\mathcal{V}_1,\mathcal{V}_2)$ be equivalent to
    $(\phi(\mathcal{V}_1),\phi(\mathcal{V}_2))$.
\end{enumerate}
Then two surfaces $S$ and $S'$ are deformation equivalent if and
only if the corresponding pairs of systems of generators are in
the same equivalence class.
\end{theo}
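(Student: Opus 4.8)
The plan is to translate the geometric statement into the combinatorics of monodromy data and then invoke the earlier structural results. First I would recall from \cite{cat00} that a surface $S=(C_1\times C_2)/G$ isogenous to a higher product of unmixed type has a \emph{unique} minimal realization, so the assignment $S\mapsto (G,(\mathcal V_1,\mathcal V_2))$ is well defined once one remembers that the $G$-action on each $C_i$ is encoded by an admissible epimorphism $\theta_i\colon\Gamma(g'_i\mid m_{i,1},\dots,m_{i,r_i})\twoheadrightarrow G$, via Riemann's existence theorem, and that the $\Sigma$-condition \eqref{eq.sigmasetcond} is exactly freeness of the diagonal action. The ambiguities in this datum are precisely: (i) the choice of a geometric basis of $\pi_1(C'_i\setminus\mathcal B_i)$, which by Theorem~\ref{out} and Proposition~\ref{prop_Hurwitzmoves} (together with Propositions~\ref{mong1n1} and~\ref{prop braidaction} for the low-genus cases) amounts to the $\Map_{g'_i,[r_i]}$-action, i.e.\ Hurwitz moves, on each $\mathcal V_i$; (ii) the choice of identification of the quotient group with $G$, which is an element of $\Aut(G)$ acting simultaneously on $(\mathcal V_1,\mathcal V_2)$; and (iii) composing $\theta_i$ with an inner automorphism of $G$ (or equivalently changing the base point/lift), which gives the $\Inn(G)$-action on each $\mathcal V_i$ separately. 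So the equivalence relation in the statement is exactly ``same unordered monodromy datum''.

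Next I would set up the two implications. For the easy direction, suppose $S$ and $S'$ give rise to equivalent pairs. Then the above dictionary shows $S$ and $S'$ have, after relabelling, the same group $G$ and the same pair of admissible epimorphisms up to the stated moves; reversing the construction (take the two branched $G$-covers $C_i\to\PP^1$ or $C_i\to C'_i$ determined by $\theta_i$, form $(C_1\times C_2)/G$) yields isomorphic surfaces, and since a Hurwitz move corresponds to moving the branch points along a path in the relevant configuration space, the two covers sit in a connected family; hence $S$ and $S'$ are deformation equivalent. Concretely, one fixes the combinatorial datum and lets the $r_1+r_2$ branch points vary over the (connected) moduli of $r_i$-pointed genus-$g'_i$ curves, obtaining a connected family of surfaces isogenous to a product containing both $S$ and $S'$; this uses that $\chi$, $K^2$ and $e$ depend only on $(G,(\tau_1,\tau_2))$ via \eqref{eq.pginfty} and \eqref{eq.chi.isot.fib}, so everything stays within one $\moduli_{y,x}$.

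For the converse, suppose $S$ and $S'$ are deformation equivalent. By the Weak Rigidity Theorem \cite[Theorem 3.3]{cat04} they have the same topological Euler number and isomorphic fundamental group, and the fundamental group sits in the exact sequence $1\to\pi_1(C_1)\times\pi_1(C_2)\to\pi_1(S)\to G\to 1$; one recovers $G$ as a canonical quotient (the quotient by the unique maximal normal subgroup of the form surface-group-times-surface-group of the right genera), hence $G$ is a deformation invariant up to isomorphism. The two isotrivial fibrations $\alpha_i\colon S\to C'_i$ likewise deform, so the types $(\tau_1,\tau_2)$ are invariant, and the monodromy representations $\theta_i$ vary continuously in a family; since $\Aut(G)$ is finite and $\theta_i$ is rigid up to the discrete data above, along a connected family the pair $(\mathcal V_1,\mathcal V_2)$ can only change by Hurwitz moves (from monodromy of the branch locus as it moves), by $\Inn(G)$, and by a simultaneous $\Aut(G)$ arising from a possible reidentification of the quotient group — precisely moves (1)--(3). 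I expect the main obstacle to be exactly this last point: making rigorous that in a connected family of surfaces isogenous to a product the associated monodromy data change only by the listed moves, i.e.\ controlling how the two branched-cover structures behave in families and ruling out any further identifications; this is where one must appeal carefully to the uniqueness of the minimal realization \cite{cat00}, to rigidity of $G$-covers in families, and to Theorem~\ref{out} relating $\Out^+(\Gamma)$ to the mapping class group. The bookkeeping of the low-genus exceptional cases $g'_i\le 1$ (handled by Propositions~\ref{mong1n1} and~\ref{prop braidaction}) and of the unordered-type convention is routine but must be checked.
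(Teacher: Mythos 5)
First, note that the paper does not actually prove this statement: it is quoted verbatim from \cite[Theorem 5.7]{P11} (itself a generalization of \cite[Theorem 1.3]{BC}), so there is no in-text proof to compare against line by line. Judged on its own terms, your proposal follows the standard and correct route: uniqueness of the minimal realization \cite{cat00} makes $S\mapsto(G,(\mathcal{V}_1,\mathcal{V}_2))$ well defined up to the ambiguities you list, and the forward direction (equivalent data $\Rightarrow$ deformation equivalent) is handled correctly by observing that $\Inn(G)$ and a simultaneous $\Aut(G)$ do not change the covers at all, while a Hurwitz move is realized by transporting the branch points and the complex structure of the quotient orbifold along a path, all inside one connected family with constant $\chi$, $K^2$, $e$.

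The gap is the one you flag yourself, and your proposed patch for it is not quite the right mechanism. ``The monodromy varies continuously and $\Aut(G)$ is finite, hence the data can only change by the listed moves'' is not an argument: continuity of discrete data along a path gives local constancy, but says nothing about which global identifications occur, and it presupposes that every nearby deformation of $S$ is again isogenous to a product with the same $G$ --- which is exactly the nontrivial input. The rigorous argument has two ingredients. (a) Catanese's structure theorem (\cite{cat00}, \cite{cat04}): the condition of being isogenous to a product with a fixed minimal realization datum $(G,(\tau_1,\tau_2))$ is \emph{open and closed} in the moduli space, so $\mathcal{M}_{(G,(\tau_1,\tau_2))}$ is a union of connected components and deformation equivalence reduces to lying in the same component of this locus. (b) One then exhibits $\mathcal{M}_{(G,(\tau_1,\tau_2))}$ as the quotient of a product of Teichm\"uller-type spaces of the two quotient orbifolds, times the discrete set of pairs of admissible epimorphisms, by the group generated by $\Out^+(\Gamma_i)$ (acting as Hurwitz moves via Theorem \ref{out} and Proposition \ref{prop_Hurwitzmoves}), $\Inn(G)$, and $\Aut(G)$; since each Teichm\"uller factor is connected (contractible), the connected components of the quotient are in bijection with the orbits of the discrete data under precisely moves (1)--(3), with no further identifications possible. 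Your appeal to the Weak Rigidity Theorem in the converse direction is also misplaced: deformation equivalence already gives diffeomorphism directly, and what you actually need from \cite{cat00} is the recovery of $G$ and of the exact sequence for $\pi_1(S)$, plus item (a) above. So the skeleton is right, but the decisive step --- openness and closedness plus the Teichm\"uller-space description of $\mathcal{M}_{(G,(\tau_1,\tau_2))}$ --- is only gestured at, and it is precisely the content for which one must cite \cite{cat00} and \cite{cat04} rather than ``rigidity of $G$-covers in families.''
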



If we fix a finite group  $G$ and a pair of types
$(\tau_1,\tau_2)$
of an unmixed ramification structure for $G$, counting
the number of connected components of
$\mathcal{M}_{(G,(\tau_1,\tau_2))}$ is then equivalent to the
group theoretical problem of counting the number of classes of
pairs of systems of generators of $G$ of type $(\tau_1,\tau_2)$
under the equivalence relation defined in Theorem \ref{Fabmain}.
This leads also to the
following definition.

\begin{defin}
Denote by $h(G;\tau_1,\tau_2)$ the number of \emph{Hurwitz
components}, namely the number of orbits of
$\mathcal{U}(G;\tau_1,\tau_2)$ under the action of
the group prescribed in Theorem \ref{Fabmain}.
\end{defin}


\section{Connected components of the Moduli Space of Surfaces of General Type}\label{sec.Conne}

We can simplify a lot the discussion of the previous section if we
consider only \emph{regular surfaces}, which will be assumed in the
whole section. In this case, the Hurwitz moves are given only by the $\mathbf{t}_{\sigma_i}$ of Proposition \ref{prop_Hurwitzmoves}, which corresponds to the ones described in Proposition \ref{prop braidaction}. By Theorem \ref{thm_braid} they are given by the
braid group of the sphere on $r$ strands $\mathbf{B}_r$ acting on the generators of the orbifold fundamental group as in Figure 8.

\begin{center}
\includegraphics[totalheight=5 cm, width=15 cm]{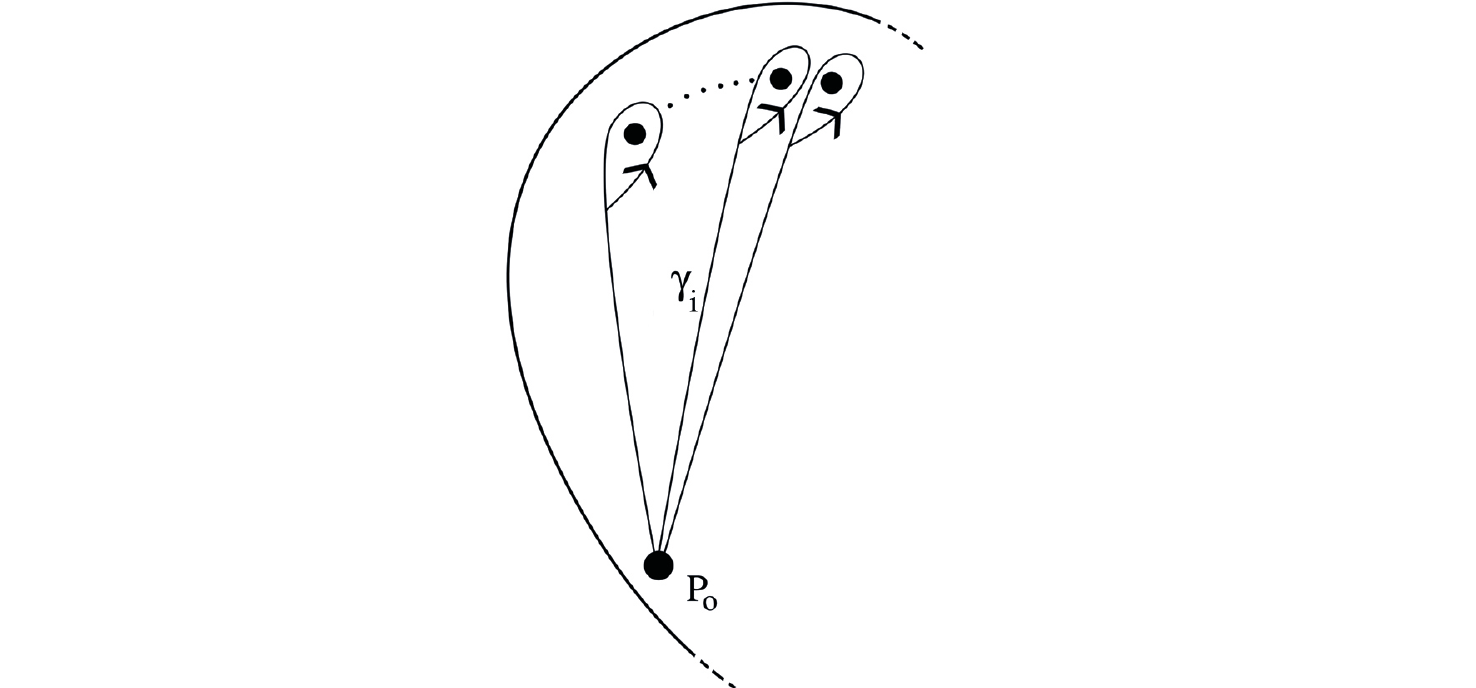} \\
Figure 8.
\end{center}

In addition, we recall

\begin{lem}\cite[Lemma 9.4]{Vol}\label{lem.Vol} The inner automorphism group, $\Inn (G)$, leaves each braid orbit invariant.
\end{lem}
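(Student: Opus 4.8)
The plan is to reduce the lemma to a statement about a single system of generators and then realise the relevant inner automorphisms by explicit braids. Fix a system of generators $\mathcal{V}=(\gamma_1,\dots,\gamma_r)$ of $G$ of the kind occurring in this section, so that $\gamma_1\cdots\gamma_r=1$ and $\langle\gamma_1,\dots,\gamma_r\rangle=G$, and write ${}^{g}\mathcal{V}:=(g\gamma_1g^{-1},\dots,g\gamma_rg^{-1})$ for $g\in G$. Since every element of $\Inn(G)$ is of this form and ${}^{g}\mathcal{V}$ is again a system of generators of the same type, the lemma is equivalent to the assertion that ${}^{g}\mathcal{V}$ lies in the $\mathbf{B}_r$-orbit of $\mathcal{V}$ for every $g\in G$. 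I would prove this by showing that the set $H:=\{\,g\in G:\ {}^{g}\mathcal{V}\ \text{lies in the braid orbit of }\mathcal{V}\,\}$ is all of $G$.

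First I would note that every $\varphi\in\Aut(G)$ commutes with the braid action of Proposition~\ref{prop braidaction}: a braid $\beta$ replaces a tuple by the tuple whose entries are certain fixed words in the old entries, and $\varphi$ passes through such words, so $\varphi(\beta\cdot\mathcal{V})=\beta\cdot\varphi(\mathcal{V})$. Two consequences: $H$ is a subgroup of $G$ --- it contains $1$, and from ${}^{gh}\mathcal{V}={}^{g}({}^{h}\mathcal{V})$ together with the commutation it is closed under multiplication, whence it is a subgroup since a non-empty multiplicatively closed subset of a finite group is one; and $H$ is unchanged when $\mathcal{V}$ is replaced by any member of its braid orbit.

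Next, since $G=\langle\gamma_1,\dots,\gamma_r\rangle$, it suffices to put every $\gamma_i$ into $H$; and since a word in the $\sigma_j^{\pm1}$ carries a chosen $\gamma_i$ into the first slot of a braid-equivalent system of generators, it is enough to show: for any system of generators $(w_1,\dots,w_r)$ with $w_1\cdots w_r=1$, conjugation by $w_1$ is realised by a braid. Here I would use $F:=(\sigma_2\sigma_3\cdots\sigma_{r-1})^{r-1}$, the full twist on the last $r-1$ strands (one may assume $r\ge3$, the cases $r\le2$ being vacuous as $G$ is then cyclic). This $F$ fixes $w_1$ and conjugates each of $w_2,\dots,w_r$ by the product $w_2\cdots w_r=w_1^{-1}$, so $F$ realises on $(w_1,\dots,w_r)$ the inner automorphism attached to $w_1^{\pm1}$; as $H$ is a subgroup this gives $w_1\in H$. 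Hence every $\gamma_i\in H$, so $H=G$, which is the lemma.

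The only genuine computation is the claim used in the last step, that the central element $F$ of $\langle\sigma_2,\dots,\sigma_{r-1}\rangle\cong\mathbf{B}_{r-1}$ acts on $(w_2,\dots,w_r)$ by simultaneous conjugation by $w_2\cdots w_r$. This is the classical fact that the full twist generates the centre of the braid group and acts by global conjugation by the total product of the tuple; I would verify it by a short induction on $r$ straight from the formula for $\sigma_j$ in Proposition~\ref{prop braidaction}. Everything else in the argument is formal.
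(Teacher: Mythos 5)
Your proof is correct: the subgroup trick for $H$, the commutation of $\Aut(G)$ with the Hurwitz action, and the use of the full twist $(\sigma_2\cdots\sigma_{r-1})^{r-1}$ on the last $r-1$ strands (which conjugates $w_2,\dots,w_r$ by $w_2\cdots w_r=w_1^{-1}$ while fixing $w_1$) all check out, and the reduction to placing each $\gamma_i$ in the first slot is legitimate since $\sigma_{i-1}$ puts $\gamma_i$ itself, not merely a conjugate, into position $i-1$. The paper gives no proof of this lemma --- it is quoted from V\"olklein --- and your argument is essentially the standard one found there, so nothing further is needed.
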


This lemma allows us to use the above Theorem \ref{Fabmain} in the
simplified version without the $\Inn(G)$ action on the system of
generators $\mathcal{V}_i$. Since the two actions of
$\mathbf{B}_r$ and $\Aut(G)$ commute, one gets a double action of
$\mathbf{B}_r \times \Aut(G)$ on the set of $r-$systems of
generators for $G$.

Therefore, for regular surfaces isogenous to a product we have that fixing a finite group  $G$ and a pair of types
$(\tau_1,\tau_2)=(m_{1,1}, \dots , m_{1,r_1}, m_{2,1}, \dots , m_{2,r_2}) $ of an unmixed ramification structure for $G$
counting the number of connected components of
$\mathcal{M}_{(G,(\tau_1,\tau_2))}$ is then equivalent to the
group theoretical problem of counting the number of classes of
pairs of systems of generators of $G$ of type $(\tau_1,\tau_2)$
under the equivalence relation given by the action of
$\mathbf{B}_{r_1} \times \mathbf{B}_{r_2} \times \Aut(G)$. In this
case the number of Hurwitz components $h(G;\tau_1,\tau_2)$ is
given by the number of orbits of $\mathcal{U}(G;\tau_1,\tau_2)$
under the following actions:
\begin{description}
\item[if $\tau_1 \neq \tau_2$] the action of $(\mathbf{B}_{r_1}
\times \mathbf{B}_{r_2}) \times \Aut(G)$, given by:
\[
    ((\gamma_1, \gamma_2), \phi) \cdot (T_1, T_2) := \bigr(\phi(\gamma_1(T_1)),
    \phi(\gamma_2(T_2))\bigl),
\]
where $\gamma_1 \in \mathbf{B}_{r_1}$, $\gamma_2 \in
\mathbf{B}_{r_2}$, $\phi \in \Aut(G)$ and $(T_1,T_2) \in
\mathcal{U}(G;\tau_1,\tau_2)$.

\item[if $\tau_1=\tau_2$] the action of $(\mathbf{B}_{r} \wr
\ZZ/2\ZZ) \times \Aut(G)$, where $\ZZ/2\ZZ$ acts on $(T_1,T_2)$ by
exchanging the two factors.
\end{description}

In case of Beauville surfaces we define $h$ as above substituting
$r_1$ and $r_2$ with $3$.

\begin{prop}\label{prop.asy.comp} Fix $r_1$ and $r_2$ in $\mathbb{N}$.
Let $\{G_n\}^{\infty}_{n=1}$ be a family of finite groups, which
admit an unmixed ramification structure of size $(r_1,r_2)$. Let
$\tau_{n,1} = (m_{n,1,1},\dots,m_{n,1,r_1})$ and $\tau_{n,2} =
(m_{n,2,1},\dots,m_{n,2,r_2})$ be sequences of types
$(\tau_{n,1},\tau_{n,2})$ of unmixed ramification structures for
$G_n$, and $\{S_n\}^{\infty}_{n=1}$ be the family of surfaces
isogenous to product with $q=0$ admitting the given data, then as
$|G_n| \stackrel{n \rightarrow \infty}{\longrightarrow} \infty$ :
\begin{enumerate}\renewcommand{\theenumi}{\it \roman{enumi}}
    \item $\chi(S_n) = \Theta(|G_n|)$.
    \item $h(G_n; \tau_{n,1}, \tau_{n,2}) = O(\chi(S_n)^{r_1+r_2-2})$.
\end{enumerate}
\end{prop}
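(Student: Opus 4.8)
The plan is to establish the two statements separately, both by elementary counting.

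For part (i), I would start from the formula \eqref{eq.pginfty}, which expresses $4\chi(S_n)$ as $|G_n|$ times a product of two factors of the form $2g'_{n,i}-2+\sum_{k=1}^{r_i}(1-\tfrac{1}{m_{n,i,k}})$. Since $r_1$ and $r_2$ are fixed and each $g'_{n,i}$ is a nonnegative integer, and since the Riemann--Hurwitz quantity in \eqref{eq.Rim.Hur.Condition} must be a positive integer (indeed $\geq 2$ as required by the definition of an unmixed ramification structure), each factor is bounded below by a positive absolute constant and above by $2\max(r_1,r_2)$; hence $\chi(S_n)$ is pinched between two constant multiples of $|G_n|$, which is exactly $\chi(S_n)=\Theta(|G_n|)$. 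The only subtlety is verifying that the lower bound on each factor is uniform, but this is immediate: the smallest positive value of $2g'-2+\sum(1-\tfrac1m)$ over all admissible data with at most a fixed number of branch points is a positive constant (e.g.\ for $g'=0$, $r=3$ the minimum of $1-\mu$ is $1/42$, attained at $(2,3,7)$), and this is what controls the estimate.

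For part (ii), the strategy is to bound the total number of $r$-systems of generators of type $\tau_{n,i}$ and then divide by nothing --- that is, simply bound $|\mathcal{U}(G_n;\tau_{n,1},\tau_{n,2})|$ from above, since the number of orbits $h(G_n;\tau_{n,1},\tau_{n,2})$ is at most the number of elements. A system of generators of type $(g'_i\mid m_{i,1},\dots,m_{i,r_i})$ for a regular surface means $g'_i=0$, so it is an $r_i$-tuple $(c_1,\dots,c_{r_i})\in G_n^{r_i}$ with $c_1\cdots c_{r_i}=1$, generating $G_n$, with $\mathrm{ord}(c_k)=m_{i,k}$. The product-one relation cuts the number of free choices by one: there are at most $|G_n|^{r_i-1}$ such tuples. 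Hence $|\mathcal{U}(G_n;\tau_{n,1},\tau_{n,2})|\leq |G_n|^{r_1-1}\cdot|G_n|^{r_2-1}=|G_n|^{r_1+r_2-2}$, and therefore
\[
h(G_n;\tau_{n,1},\tau_{n,2})\leq |\mathcal{U}(G_n;\tau_{n,1},\tau_{n,2})|\leq |G_n|^{r_1+r_2-2}.
\]
Combining with part (i), $|G_n|=\Theta(\chi(S_n))$, so $h(G_n;\tau_{n,1},\tau_{n,2})=O(\chi(S_n)^{r_1+r_2-2})$, as claimed. (In the case $\tau_{n,1}=\tau_{n,2}$ one quotients additionally by $\ZZ/2\ZZ$, which only decreases the count, so the bound is unaffected.)

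The main obstacle --- really the only place one must be a little careful --- is part (ii): one must make sure the bound $|G_n|^{r_i-1}$ is genuinely an upper bound for the number of generating tuples with the product-one constraint and the prescribed orders, and that discarding the orbit structure (rather than trying to estimate orbit sizes, which would require understanding the braid and $\Aut(G_n)$ actions quantitatively) still yields the stated asymptotic. Since we only want an $O$-bound and not a lower bound, throwing away the group actions entirely is legitimate and costs nothing in the exponent; the order constraints and the generation constraint only cut down the count further, so they may be ignored for the upper bound. Thus the proof reduces to the two elementary observations above, and no input beyond \eqref{eq.pginfty}, \eqref{eq.Rim.Hur.Condition} and the definitions is needed.
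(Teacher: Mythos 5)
Your proof is correct and follows essentially the same route as the paper: part (i) pinches $\chi(S_n)$ between constant multiples of $|G_n|$ using \eqref{eq.pginfty} together with the Hurwitz lower bound $1/42$ and the upper bound $r_i-2$ on each factor, and part (ii) bounds $h(G_n;\tau_{n,1},\tau_{n,2})$ by $|\mathcal{U}(G_n;\tau_{n,1},\tau_{n,2})|\leq|G_n|^{r_1+r_2-2}$ via the product-one relation before invoking (i). The only cosmetic point is that since $q=0$ forces $g'_{n,i}=0$ you should drop the genus terms outright rather than merely note they are nonnegative integers, as an unbounded $g'$ would invalidate your claimed constant upper bound on each factor.
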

\begin{proof}
\begin{enumerate}\renewcommand{\theenumi}{\it \roman{enumi}}
\item  Note that, for $i=1,2$,
\[
  \frac{1}{42} \leq -2 + \sum_{j=1}^{r_i} \bigl( 1-\frac{1}{m_{n,i,j}}
  \bigr) \leq r_i-2.
\]
Indeed, for $r_i=3$, the minimal value for $(1-\mu_i)$ is $1/42$.
For $r_i=4$, the minimal value for $\bigl( -2 + \sum_{j=1}^{r_i}
\bigl( 1-\frac{1}{m_{n,i,j}} \bigr) \bigr)$ is $1/6$, and when
$r_i \geq 5$, this value is at least $1/2$.

Now, by Equation~\eqref{eq.pginfty},
\[
    4\chi(S_n) = |G_n| \cdot \left(-2 + \sum_{j=1}^{r_1} \bigl( 1-\frac{1}{m_{n,1,j}}
  \bigr)\right) \cdot \left( -2 + \sum_{j=1}^{r_2} \bigl( 1-\frac{1}{m_{n,2,j}}
  \bigr) \right),
\]
hence
\[
    \frac{|G_n|}{4 \cdot 42^2} \leq \chi(S_n) \leq \frac{(r_1-2)(r_2-2)|G_n|}{4}.
\]

\item For $i=1,2$, any $r_i-$system of generators
$\mathcal{V}_{n,i}$ contains at most $r_i-1$ independent elements
of $G_n$. Thus, the size of the set of all unordered pairs of type
$(\tau_{n,1},\tau_{n,2})$ is bounded from above, by
\[
    |\mathcal{U}(G_n;\tau_{n,1},\tau_{n,2})| \leq |G_n|^{r_1+r_2-2},
\]
and so, the number of connected components is bounded from above
by
\[
    h(G_n;\tau_{n,1},\tau_{n,2}) \leq |G_n|^{r_1+r_2-2}.
\]
Now, the result follows from (1).
\end{enumerate}
\end{proof}
Therefore we cannot expect more than a polynomial growth in the number of connected components of surfaces of general type if we count only regular surfaces isogenous to a product.
Together with Shelly Garion we investigated the asymptotic growth of
the number of connected components of the moduli space of surfaces
of general type by using the group theoretical methods described
above for regular surfaces isogenous to a product. Our first
results already appeared on the ArXiv in 2009 (see arXiv:0910.5402v1). The following are
some of the results contained in \cite{GP}.
\begin{notation} Denote:
\begin{itemize}

\item $h(n)=\Omega(g(n))$, if $h(n) \geq cg(n)$ for some positive
constant $c$, as $n \to \infty$.

\item $h(n)=\Theta(g(n))$, if $c_1g(n) \leq h(n) \leq c_2g(n)$ for
some positive constants $c_1,c_2$, as $n \to \infty$.
\end{itemize}
\end{notation}

\begin{theo}\label{thm.moduli.An}
Let $\tau_1=(m_{1,1}, \dots ,m_{1,r_1})$ and $\tau_2=(m_{2,1}, \dots ,m_{2,r_2})$ be two sequences of natural numbers such
that $m_{k,i} \geq 2$ and $\sum_{i=1}^{r_k}(1-1/m_{k,i}) > 2$ for
$k=1,2$. Let $h(\mathfrak{A}_n, \tau_1, \tau_2)$ be the number
of connected components of the moduli space of surfaces isogenous to a product with $q=0$, with group the alternating group $\mathfrak{A}_n$, and with type
$(\tau_1,\tau_2)$. Then
\[
    (a) \quad h(\mathfrak{A}_n, \tau_1, \tau_2) = \Omega(n^{r_1+r_2}),
\]
and moreover,
\[
    (b) \quad h(\mathfrak{A}_n, \tau_1, \tau_2) =
    \Omega\bigl(\bigl(\log(\chi)\bigr)^{r_1+r_2-\epsilon}\bigr).
\]
where $0< \epsilon \in \mathbb{R}$.
\end{theo}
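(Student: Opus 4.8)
The plan is to exhibit, for $n$ large, at least $\Omega(n^{r_1+r_2})$ pairwise inequivalent unmixed ramification structures of type $(\tau_1,\tau_2)$ on $\mathfrak{A}_n$, and then to obtain (b) from (a) by re-expressing $n$ in terms of $\chi$. Recall that, by the reformulation given just before the statement, $h(\mathfrak{A}_n,\tau_1,\tau_2)$ equals the number of orbits of $\mathcal{U}(\mathfrak{A}_n;\tau_1,\tau_2)$ under $(\mathbf{B}_{r_1}\times\mathbf{B}_{r_2})\times\Aut(\mathfrak{A}_n)$ (or under $(\mathbf{B}_{r_1}\wr\ZZ/2\ZZ)\times\Aut(\mathfrak{A}_n)$ when $\tau_1=\tau_2$). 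The invariant I would use to separate orbits assigns to a pair $(\mathcal{V}_1,\mathcal{V}_2)$ the unordered pair consisting of the multiset of $\mathfrak{A}_n$-conjugacy classes of the members of $\mathcal{V}_1$ and the analogous multiset for $\mathcal{V}_2$. Inspecting the braid moves of Proposition~\ref{prop braidaction}, each $\sigma_i$ merely transposes the classes occurring in positions $i$ and $i+1$, so this invariant is constant on braid orbits; and since $\Inn(\mathfrak{A}_n)$ fixes conjugacy classes, $\Aut(\mathfrak{A}_n)$ acts on it through $\Out(\mathfrak{A}_n)$, which for $n\geq 7$ has order $2$, its nontrivial element only exchanging the two halves of each $\mathfrak{S}_n$-class that splits in $\mathfrak{A}_n$. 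Hence every value of the invariant has orbit of size at most $2$, so $h(\mathfrak{A}_n,\tau_1,\tau_2)$ is at least $\tfrac12$ times the number of distinct values of the invariant realised by elements of $\mathcal{U}(\mathfrak{A}_n;\tau_1,\tau_2)$.

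Next I would count available conjugacy classes. For each fixed $m\geq 2$ and each large $n$ there are $\Omega(n)$ pairwise distinct cycle types of even permutations of $\mathfrak{S}_n$ that all have order exactly $m$ and differ only in the number, which is of order $n$, of $m$-cycles they contain: concretely, add a variable number of disjoint $m$-cycles (of the parity needed to stay even when $m$ is even) to a fixed even permutation of order dividing $m$, and fill up with fixed points. Doing this independently at each of the $r_1+r_2$ positions of $(\tau_1,\tau_2)$, and fixing once and for all a total ordering of cycle types so as to choose one representative modulo the bounded reordering of positions carrying equal orders, produces $\Omega(n^{r_1+r_2})$ distinct candidate values of the invariant, each compatible with the prescribed type.

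The main obstacle is realizability: proving that a positive proportion of these $\Omega(n^{r_1+r_2})$ candidates is actually attained by an honest pair of disjoint systems of generators. For one factor, given $\mathfrak{A}_n$-classes $C_1,\dots,C_{r_k}$ of the chosen cycle types (here $r_k\geq 3$, forced by $\sum_i(1-1/m_{k,i})>2$), one must produce $g_i\in C_i$ with $g_1\cdots g_{r_k}=1$ and $\langle g_1,\dots,g_{r_k}\rangle=\mathfrak{A}_n$. I would do this by a gluing construction: start from a fixed generating $r_k$-tuple of $\mathfrak{A}_{n_0}$, with product $1$ and with the $i$-th element of order dividing $m_{k,i}$, which exists for $n_0$ large because the corresponding co-compact Fuchsian group has $\mathfrak{A}_{n_0}$ as a quotient; then attach to the $i$-th generator a string of disjoint $m_{k,i}$-cycles on the remaining $n-n_0$ points, arranged with a slight overlap with the first block so that the enlarged group stays transitive and hence, by standard arguments (primitivity, together with Jordan-type bounds applied to the bounded-support elements coming from the first block), equals $\mathfrak{A}_n$, while the relation $g_1\cdots g_{r_k}=1$ is preserved; the number of attached cycles is the parameter supplying the $\Omega(n)$-fold freedom per position. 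Finally I would choose the two families of cycle types so that no nontrivial power of a member of $\mathcal{V}_1$ is $\mathfrak{A}_n$-conjugate to a power of a member of $\mathcal{V}_2$ --- which is possible while keeping $\Omega(n)$ choices at each position --- so that $\Sigma(\mathcal{V}_1)\cap\Sigma(\mathcal{V}_2)=\{1\}$ and $(\mathcal{V}_1,\mathcal{V}_2)$ is a genuine unmixed ramification structure. Carrying out the gluing, the generation check and the disjointness requirement uniformly across the whole family is the delicate point; granting it, the reduction of the first paragraph yields $h(\mathfrak{A}_n,\tau_1,\tau_2)=\Omega(n^{r_1+r_2})$, which is (a).

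Part (b) is then a change of variables. By part~(i) of Proposition~\ref{prop.asy.comp}, $\chi(S_n)=\Theta(|\mathfrak{A}_n|)=\Theta(n!)$, hence $\log\chi(S_n)=\Theta(n\log n)$ and therefore $n=\Theta\bigl(\log\chi(S_n)/\log\log\chi(S_n)\bigr)$. Substituting into (a),
\[
h(\mathfrak{A}_n,\tau_1,\tau_2)=\Omega\!\left(\left(\frac{\log\chi}{\log\log\chi}\right)^{r_1+r_2}\right)=\Omega\bigl((\log\chi)^{\,r_1+r_2-\epsilon}\bigr)
\]
for every real $\epsilon>0$, because $(\log\log\chi)^{r_1+r_2}=O\bigl((\log\chi)^{\epsilon}\bigr)$ as $\chi\to\infty$.
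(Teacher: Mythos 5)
Your overall architecture --- separating the $(\mathbf{B}_{r_1}\times\mathbf{B}_{r_2})\times\Aut(\mathfrak{A}_n)$--orbits by the multiset of conjugacy classes of the generators (which the braid moves of Proposition \ref{prop braidaction} only permute and conjugate, and which $\Aut(\mathfrak{A}_n)\cong\mathfrak{S}_n$ for $n\geq 7$ perturbs by at most a bounded factor), counting $\Omega(n)$ admissible cycle types of each order $m_{k,i}$, and deducing (b) from (a) via $\log\chi=\Theta(n\log n)$ --- is the same as the one this paper points to: no proof is given here, the text deferring part (a) to \cite[Section 3.2]{GP}, where the key input is a theorem of Liebeck and Shalev \cite{LS04} on generating $\mathfrak{A}_n$ by tuples with product one lying in prescribed conjugacy classes. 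Your part (b) computation is correct, and so is the orbit--separation and the count of candidate class--multisets. The genuine gap is the realizability step, which carries essentially all of the mathematical content of part (a), and your proposed elementary substitute for Liebeck--Shalev does not work as described. If you enlarge each $g_i$ to $g_ih_i$ with $h_i$ a product of $m_{k,i}$--cycles supported (essentially) on the $n-n_0$ new points, then the restriction of $g_1h_1\cdots g_{r_k}h_{r_k}$ to the new points is $h_1\cdots h_{r_k}$, which must itself be the identity; a product of nontrivial permutations with pairwise disjoint supports is never the identity, so the $h_i$ must overlap one another and form a product--one tuple in their own right. This couples the $r_k$ positions: you cannot add cycles ``independently at each position,'' and showing that one can nevertheless realize $\Omega(n)$ genuinely free choices per position while preserving $\prod g_i=1$, the exact orders, and generation of all of $\mathfrak{A}_n$ is exactly what \cite{LS04} is invoked for. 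Acknowledging the step as ``the delicate point'' and granting it amounts to granting the theorem.

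Two further holes in the sketch. First, transitivity of the enlarged group does not yield primitivity, and Jordan--type theorems about elements of small support require primitivity; a group built by chaining cycles across blocks can perfectly well preserve a block system, so the conclusion $\langle g_1h_1,\dots,g_{r_k}h_{r_k}\rangle=\mathfrak{A}_n$ needs a real argument. Second, the disjointness condition $\Sigma(\mathcal{V}_1)\cap\Sigma(\mathcal{V}_2)=\{1\}$ is only asserted; since the types $\tau_1,\tau_2$ may share orders, one must separate the two systems by an invariant such as fixed--point counts of all nontrivial powers, and one must check that this can be done while retaining $\Omega(n)$ choices at every one of the $r_1+r_2$ positions, which again interacts with the hypotheses under which the realizability theorem applies.
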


\begin{theo}\label{thm.moduli.Sn}
Let $\tau_1 = (m_{1,1},\dots,m_{1,r_1})$ and $\tau_2 =
(m_{2,1},\dots,m_{2,r_2})$ be two sequences of natural numbers such
that $m_{k,i} \geq 2$, at least two of $(m_{k,1},\dots,m_{ k,r_k})$
are even and $\sum_{i=1}^{r_k}(1-1/m_{k,i}) > 2$, for $k=1,2$. Let $h(\mathfrak{S}_n,
\tau_1, \tau_2)$ be the number
of connected components of the moduli space of surfaces isogenous to a product with $q=0$, with group the symmetric group
$\mathfrak{S}_n$, and with type $(\tau_1,\tau_2)$. Then
\[
    (a)\quad h(\mathfrak{S}_n, \tau_1, \tau_2) = \Omega(n^{r_1+r_2}),
\]
and moreover,
\[
    (b) \quad h(\mathfrak{S}_n, \tau_1, \tau_2) =
    \Omega\bigl(\bigl(\log(\chi)\bigr)^{r_1+r_2-\epsilon}\bigr).
\]
where $0< \epsilon \in \mathbb{R}$.
\end{theo}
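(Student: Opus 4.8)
The plan is to mirror the strategy used for the alternating groups in Theorem \ref{thm.moduli.An}, transporting it to the symmetric groups $\mathfrak{S}_n$, and to reduce the counting of Hurwitz components to a lower bound on the number of orbits of the $\mathbf{B}_{r_1} \times \mathbf{B}_{r_2} \times \Aut(\mathfrak{S}_n)$-action on $\mathcal{U}(\mathfrak{S}_n; \tau_1, \tau_2)$. First I would check that, under the arithmetic hypotheses $m_{k,i} \geq 2$, at least two of the $m_{k,i}$ even, and $\sum_{i=1}^{r_k}(1-1/m_{k,i}) > 2$, the group $\mathfrak{S}_n$ actually admits an unmixed ramification structure of the prescribed type $(\tau_1, \tau_2)$ for all $n$ large; this is where the ``at least two even'' condition enters, since one needs systems of generators landing in the right conjugacy classes while simultaneously arranging disjointness (condition \eqref{eq.sigmasetcond}) and, crucially, ensuring one generates $\mathfrak{S}_n$ rather than $\mathfrak{A}_n$ (the even entries supply odd permutations of order divisible by $2$). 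The parity bookkeeping — products of the chosen cycle types must be trivial in $\mathfrak{S}_n$, i.e.\ the total sign must be even — is the first technical point to settle, and it is exactly analogous to, but slightly more delicate than, the $\mathfrak{A}_n$ case.

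Next, for part (a), I would produce an explicit family of $r_k$-tuples of elements of $\mathfrak{S}_n$ of the right orders whose number, after quotienting by the finite group $\Aut(\mathfrak{S}_n)$ (which is $\mathfrak{S}_n$ itself for $n \neq 6$, hence of size $O(n!)$ — but one must be careful, since braid and $\wr \ZZ/2\ZZ$ orbits are also being quotiented) grows like $n^{r_1+r_2}$. The natural device, following \cite{GP}, is to use the freedom in choosing the supports of the cycles appearing in the generators: one fixes a ``backbone'' system of generators and then perturbs it by varying which points of $\{1,\dots,n\}$ are moved by certain cycles, in such a way that the resulting tuples lie in distinct Hurwitz classes. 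To certify that two such perturbed tuples are \emph{not} braid-equivalent one needs an invariant of braid orbits that is insensitive to the $\mathbf{B}_{r_i}$-action but sensitive to the perturbation — for instance, the multiset of conjugacy classes is a braid invariant, so one arranges the perturbation to change a more refined invariant, such as the isomorphism type of the subgroup generated by a distinguished sub-collection, or the cycle structure of a fixed product of the generators modulo the action. The key counting estimate is then: the number of admissible perturbations is $\Theta(n^{r_1+r_2})$, while each $\Aut(\mathfrak{S}_n) \times \mathbf{B}_{r_1} \times \mathbf{B}_{r_2}$-orbit has size $O(n! \cdot C)$ with $C$ depending only on $r_1, r_2$, and since the total number of tuples of the given type is $\Theta(n^{r_1+r_2} \cdot n!)$ by an elementary count (each generator ranges over a conjugacy class of size $\Theta(n!/n^{\text{(number of fixed points)}}$-type)), the number of orbits is $\Omega(n^{r_1+r_2})$.

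For part (b), I would combine (a) with the invariant estimate from Proposition \ref{prop.asy.comp}(i): since $\chi(S_n) = \Theta(|\mathfrak{S}_n|) = \Theta(n!)$, one has $\log \chi(S_n) = \Theta(n \log n)$, hence $n = \Theta\bigl(\log\chi / \log\log\chi\bigr)$, and therefore
\[
  h(\mathfrak{S}_n, \tau_1, \tau_2) = \Omega(n^{r_1+r_2})
  = \Omega\Bigl( \bigl( \log\chi / \log\log\chi \bigr)^{r_1+r_2} \Bigr)
  = \Omega\bigl( (\log\chi)^{r_1+r_2-\epsilon} \bigr)
\]
for any $\epsilon > 0$, absorbing the $\log\log\chi$ denominators into the $\epsilon$-loss. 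The main obstacle, and the step I would spend the most care on, is the braid-invariance argument in part (a): one must exhibit perturbations that are provably inequivalent under the full $\mathbf{B}_{r_1} \times \mathbf{B}_{r_2}$-action (not merely under conjugation), and doing this uniformly in $n$ while keeping the count at the full order $n^{r_1+r_2}$ requires choosing the perturbation data so that a genuinely braid-stable invariant separates the classes — this is the heart of the argument and the place where the hypotheses on $\tau_1, \tau_2$ (in particular $\sum(1-1/m_{k,i}) > 2$, which forces $r_k \geq 3$ and gives enough generators to hide the perturbation) are used in an essential way.
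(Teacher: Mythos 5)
The paper does not actually prove this theorem in the text: it defers part (a) to \cite[Section 3.2]{GP}, where the argument rests on results of Liebeck and Shalev \cite{LS04}, and part (b) to \cite[Section 2]{GP}. Measured against that argument, your outline has the right skeleton --- find an invariant of the $\mathbf{B}_{r_1}\times\mathbf{B}_{r_2}\times\Aut(\mathfrak{S}_n)$-action, count its admissible values, and deduce (b) from (a) via $\chi=\Theta(n!)$, hence $\log\chi=\Theta(n\log n)$ and $n=\Theta(\log\chi/\log\log\chi)$, which is correct --- but the core of part (a) as you describe it does not work.

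Two concrete problems. First, your ``perturbation'' varies which points of $\{1,\dots,n\}$ the cycles move while keeping the cycle types fixed; but two tuples that differ by conjugation in $\mathfrak{S}_n$ lie in the same Hurwitz class (for $n\neq 6$ every automorphism of $\mathfrak{S}_n$ is inner, and $\Inn(G)$ preserves braid orbits by Lemma \ref{lem.Vol}), so such perturbations produce nothing new. Second, your counting step rests on the claims that each orbit has size $O(n!\cdot C)$ and that the total number of tuples of the given type is $\Theta(n^{r_1+r_2}\cdot n!)$; both are false. By \cite{LS04} the number of systems of generators of type $\tau$ is $(n!)^{\mu+1+o(1)}$ with $\mu=\sum_i(1-1/m_i)-2>0$, and braid orbits (the braid group being infinite) can be of comparable size, so no orbit-size bound of the form $O(n!\cdot C)$ is available. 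The invariant that actually separates $\Omega(n^{r_1+r_2})$ classes is the one you mention only in passing and then set aside: the multiset of $\mathfrak{S}_n$-conjugacy classes of the entries. It is preserved by braid moves and by $\Aut(\mathfrak{S}_n)=\Inn(\mathfrak{S}_n)$; for fixed $m\geq 2$ there are $\Theta(n)$ classes of elements of order $m$ (vary the number of $m$-cycles against the number of fixed points), so there are $\Omega(n^{r_1+r_2})$ admissible vectors of classes compatible with the sign constraint $\prod_i\sgn(C_i)=1$, with the product-one condition, and with the disjointness condition \eqref{eq.sigmasetcond}. What is missing from your proposal is the existence input: the Liebeck--Shalev theorem guaranteeing that, for $n$ large and $\sum_i(1-1/m_{k,i})>2$, every such admissible vector of classes is realized by a tuple with product one that generates $\mathfrak{S}_n$ rather than $\mathfrak{A}_n$ (this is where the hypothesis that at least two of the $m_{k,i}$ are even enters, since a generating set of $\mathfrak{S}_n$ with trivial product must contain at least two odd permutations). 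Without that input, or a substitute for it, the lower bound in (a) --- and hence (b) --- is not established.
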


The proofs of part (a) of both Theorems are presented in \cite[Section 3.2]{GP}, and are based on results of Liebeck and Shalev \cite{LS04}. The proofs of part (b)
of both theorems appear in \cite[Section 2]{GP}.

We can specialize the results above to Beauville surfaces. Recall
that a triple $(r,s,t)\in \mathbb{N}^3$ is said to be hyperbolic if
\[ \frac{1}{r}+\frac{1}{s}+\frac{1}{t}<1.
\]

\begin{cor}\label{cor.moduli.Beu.An}
Let $\tau_1=(r_1,s_1,t_1)$ and $\tau_2=(r_2,s_2,t_2)$ be two
hyperbolic types and let $h(\mathfrak{A}_n, \tau_1, \tau_2)$ be the number of
Beauville surfaces with group $\mathfrak{A}_n$ and with types
$(\tau_1,\tau_2)$. Then
\[
    (a) \quad h(\mathfrak{A}_n, \tau_1, \tau_2) = \Omega(n^6),
\]
and moreover,
\[
    (b) \quad h(\mathfrak{A}_n, \tau_1, \tau_2) =
    \Omega\bigl(\bigl(\log(\chi)\bigr)^{6-\epsilon}\bigr).
\]
where $0< \epsilon \in \mathbb{R}$.
\end{cor}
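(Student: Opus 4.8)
The plan is to deduce Corollary \ref{cor.moduli.Beu.An} directly from Theorem \ref{thm.moduli.An} by specializing to the Beauville situation $r_1=r_2=3$. First I would check that the hypotheses of Theorem \ref{thm.moduli.An} are met: a hyperbolic triple $\tau_k=(r_k,s_k,t_k)$ has each entry $\geq 2$ and satisfies $\sum_{i}(1-1/m_{k,i}) = 3 - (1/r_k+1/s_k+1/t_k) > 3-1 = 2$, precisely because the triple is hyperbolic. So both $\tau_1$ and $\tau_2$ are admissible sequences of length $r_1=r_2=3$ in the sense of Theorem \ref{thm.moduli.An}. One also needs to recall that for $r_k=3$ the relevant moduli space consists of Beauville surfaces (rigid surfaces isogenous to a product with $C_i/G\cong\PP^1$ branched over exactly $3$ points), so that $h(\mathfrak{A}_n,\tau_1,\tau_2)$ as defined in the corollary agrees with the quantity $h(\mathfrak{A}_n,\tau_1,\tau_2)$ appearing in Theorem \ref{thm.moduli.An} — this is just the remark following the definition of Beauville ramification structure in Section \ref{sec.SurfIsoProd}, together with the convention stated right before the corollary that $h$ for Beauville surfaces is defined with $r_1,r_2$ replaced by $3$.

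With that identification in hand, part $(a)$ is immediate: substituting $r_1+r_2 = 3+3 = 6$ into the bound $h(\mathfrak{A}_n,\tau_1,\tau_2) = \Omega(n^{r_1+r_2})$ of Theorem \ref{thm.moduli.An}$(a)$ gives $h(\mathfrak{A}_n,\tau_1,\tau_2) = \Omega(n^6)$. Likewise part $(b)$ follows by substituting $r_1+r_2-\epsilon = 6-\epsilon$ into Theorem \ref{thm.moduli.An}$(b)$, yielding $h(\mathfrak{A}_n,\tau_1,\tau_2) = \Omega\bigl(\bigl(\log(\chi)\bigr)^{6-\epsilon}\bigr)$ for any $0<\epsilon\in\RR$. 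Here $\chi = \chi(S_n)$, and one should note that by Proposition \ref{prop.asy.comp}$(i)$ we have $\chi(S_n) = \Theta(|\mathfrak{A}_n|) = \Theta(n!/2)$, so $\log\chi = \Theta(n\log n)$; this is the implicit translation between the two formulations of the lower bound, but it is already built into Theorem \ref{thm.moduli.An}$(b)$, so nothing extra is required.

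The only genuinely substantive input is Theorem \ref{thm.moduli.An} itself, whose proof (cited to \cite{GP}, resting on the work of Liebeck–Shalev \cite{LS04} for part $(a)$) we are entitled to assume. Thus the corollary is a formal specialization and there is no real obstacle; the one point deserving a line of care is verifying that a hyperbolic triple indeed satisfies the strict inequality $\sum(1-1/m_{k,i})>2$ and hence lands in the hypothesis of the theorem, and that the Riemann–Hurwitz integrality condition \eqref{eq.Rim.Hur.Condition} is automatic in the regular ($q=0$) case by \cite[Lemma 2.4]{GP}, so that the surfaces in question genuinely exist for all large $n$ once $\mathfrak{A}_n$ carries the corresponding ramification structure.

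\begin{proof}
This is a direct specialization of Theorem \ref{thm.moduli.An} to the case $r_1=r_2=3$. If $\tau_k=(r_k,s_k,t_k)$ is hyperbolic then each of $r_k,s_k,t_k$ is at least $2$, and
\[
\sum_{i=1}^{3}\Bigl(1-\frac{1}{m_{k,i}}\Bigr) = 3-\Bigl(\frac{1}{r_k}+\frac{1}{s_k}+\frac{1}{t_k}\Bigr) > 3-1 = 2,
\]
so $\tau_1$ and $\tau_2$ satisfy the hypotheses of Theorem \ref{thm.moduli.An}. By the discussion in Section \ref{sec.SurfIsoProd}, for $r_1=r_2=3$ the surfaces isogenous to a product with $q=0$ and type $(\tau_1,\tau_2)$ are exactly the Beauville surfaces with group $\mathfrak{A}_n$ and types $(\tau_1,\tau_2)$, and by convention $h$ in the Beauville case is $h(\mathfrak{A}_n,\tau_1,\tau_2)$ with $r_1,r_2$ replaced by $3$; moreover condition \eqref{eq.Rim.Hur.Condition} is automatic here by \cite[Lemma 2.4]{GP}. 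Substituting $r_1+r_2=6$ into the bounds of Theorem \ref{thm.moduli.An}$(a)$ and $(b)$ gives
\[
h(\mathfrak{A}_n,\tau_1,\tau_2) = \Omega(n^6)
\qquad\text{and}\qquad
h(\mathfrak{A}_n,\tau_1,\tau_2) = \Omega\bigl(\bigl(\log(\chi)\bigr)^{6-\epsilon}\bigr)
\]
for any $0<\epsilon\in\RR$, as claimed.
\end{proof}
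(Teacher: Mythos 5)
Your proposal is correct and matches the paper's treatment: the paper presents Corollary \ref{cor.moduli.Beu.An} as an immediate specialization of Theorem \ref{thm.moduli.An} to $r_1=r_2=3$, with the hyperbolicity condition $\frac{1}{r_k}+\frac{1}{s_k}+\frac{1}{t_k}<1$ being exactly equivalent to the hypothesis $\sum_i(1-1/m_{k,i})>2$ for triples. Your additional checks (each entry $\geq 2$, the identification of $h$ in the Beauville case, and the automatic integrality condition) are exactly the right points of care and are consistent with the paper.
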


\begin{cor}\label{cor.moduli.Beu.Sn}
Let $\tau_1=(r_1,s_1,t_1)$ and $\tau_2=(r_2,s_2,t_2)$ be two
hyperbolic types, assume that at least two of $(r_1,s_1,t_1)$ are
even and at least two of $(r_2,s_2,t_2)$ are even, and let $h(\mathfrak{S}_n,
\tau_1, \tau_2)$ be the number of Beauville surfaces with group
$\mathfrak{S}_n$ and with types $(\tau_1,\tau_2)$. Then
\[
    (a)\quad h(\mathfrak{S}_n, \tau_1, \tau_2) = \Omega(n^6),
\]
and moreover,
\[
    (b) \quad h(\mathfrak{S}_n, \tau_1, \tau_2) =
    \Omega\bigl(\bigl(\log(\chi)\bigr)^{6-\epsilon}\bigr).
\]
where $0< \epsilon \in \mathbb{R}$.
\end{cor}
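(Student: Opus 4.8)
The plan is to derive Corollary \ref{cor.moduli.Beu.Sn} as a direct specialization of Theorem \ref{thm.moduli.Sn} to the case $r_1 = r_2 = 3$, exactly as the corollary's placement in the text suggests. First I would observe that a Beauville surface with group $\mathfrak{S}_n$ and types $(\tau_1,\tau_2) = ((r_1,s_1,t_1),(r_2,s_2,t_2))$ is precisely a regular surface isogenous to a product of unmixed type with $G = \mathfrak{S}_n$ and with both branch data consisting of exactly three points; so $h(\mathfrak{S}_n,\tau_1,\tau_2)$ in the sense of the corollary coincides with $h(\mathfrak{S}_n,\tau_1,\tau_2)$ in the sense of Theorem \ref{thm.moduli.Sn} with $r_1 = r_2 = 3$. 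Hence I must check that the hypotheses of the theorem are met. The hyperbolicity condition $\frac{1}{r_k}+\frac{1}{s_k}+\frac{1}{t_k} < 1$ is, after multiplying through and rearranging, equivalent to $\sum_{i=1}^{3}(1 - 1/m_{k,i}) > 2$ for $k=1,2$, which is the "hyperbolic type" requirement in Theorem \ref{thm.moduli.Sn}; the assumption $m_{k,i} \geq 2$ is automatic since each of $r_k,s_k,t_k$ is a positive integer appearing in an orbifold surface group (a cyclic factor of order $1$ would be trivial). The remaining hypothesis "at least two of $(m_{k,1},\dots,m_{k,r_k})$ are even" translates verbatim to "at least two of $(r_k,s_k,t_k)$ are even", which is exactly what the corollary assumes.

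Having matched the hypotheses, part (a) follows immediately: Theorem \ref{thm.moduli.Sn}(a) gives $h(\mathfrak{S}_n,\tau_1,\tau_2) = \Omega(n^{r_1+r_2})$, and with $r_1 = r_2 = 3$ this is $\Omega(n^6)$. For part (b), Theorem \ref{thm.moduli.Sn}(b) gives $h(\mathfrak{S}_n,\tau_1,\tau_2) = \Omega((\log\chi)^{r_1+r_2-\epsilon})$ for some $0 < \epsilon \in \mathbb{R}$; substituting $r_1 + r_2 = 6$ yields $\Omega((\log\chi)^{6-\epsilon})$, which is the claimed bound. One small point worth remarking is that in the Beauville case the surfaces are rigid, so "the number of Beauville surfaces with group $\mathfrak{S}_n$ and types $(\tau_1,\tau_2)$" literally counts isomorphism classes of such surfaces, and each contributes a connected component (in fact a point) of the relevant moduli space; this is why $h$ can equivalently be read as counting components or as counting the surfaces themselves, so the statement is consistent with the general $h(G;\tau_1,\tau_2)$ of the preceding section.

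The proof is therefore essentially a hypothesis-translation argument, and there is no substantial new obstacle — the real content sits in Theorem \ref{thm.moduli.Sn}, whose proof of part (a) rests on the Liebeck--Shalev results \cite{LS04} on generation and the structure of $\mathfrak{S}_n$, and whose part (b) is proven in \cite[Section 2]{GP}. The only place where a little care is needed is verifying the equivalence between the hyperbolicity inequality $\frac1r+\frac1s+\frac1t<1$ and $\sum(1-1/m_i)>2$; this is the routine computation $\sum_{i=1}^3(1-1/m_i) = 3 - (\frac1r+\frac1s+\frac1t) > 2 \iff \frac1r+\frac1s+\frac1t < 1$, and once this is noted the corollary drops out. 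So I would write: "Apply Theorem \ref{thm.moduli.Sn} with $r_1 = r_2 = 3$; the hyperbolicity of $\tau_1,\tau_2$ is equivalent to $\sum_{i=1}^3(1-1/m_{k,i}) > 2$, and the evenness hypotheses coincide, so (a) and (b) follow by setting $r_1+r_2 = 6$."

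\begin{proof}
This is the specialization of Theorem~\ref{thm.moduli.Sn} to $r_1 = r_2 = 3$. Indeed, a Beauville surface with group $\mathfrak{S}_n$ and types $(\tau_1,\tau_2) = ((r_1,s_1,t_1),(r_2,s_2,t_2))$ is exactly a regular surface isogenous to a product of unmixed type with group $\mathfrak{S}_n$ whose two branch data each consist of three points, so $h(\mathfrak{S}_n,\tau_1,\tau_2)$ here is the quantity $h(\mathfrak{S}_n,\tau_1,\tau_2)$ of Theorem~\ref{thm.moduli.Sn} with $r_1 = r_2 = 3$. It remains to check the hypotheses of that theorem. Each of $r_k, s_k, t_k$ is a positive integer occurring in an orbifold surface group, hence is at least $2$. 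Writing $(m_{k,1},m_{k,2},m_{k,3}) = (r_k,s_k,t_k)$, the hyperbolicity condition gives
\[
\sum_{i=1}^{3}\Bigl(1 - \frac{1}{m_{k,i}}\Bigr) = 3 - \Bigl(\frac{1}{r_k}+\frac{1}{s_k}+\frac{1}{t_k}\Bigr) > 3 - 1 = 2,
\]
for $k = 1,2$, which is the required inequality $\sum_{i=1}^{r_k}(1 - 1/m_{k,i}) > 2$. Finally, the assumption that at least two of $(r_k,s_k,t_k)$ are even for $k=1,2$ is precisely the assumption in Theorem~\ref{thm.moduli.Sn} that at least two of $(m_{k,1},\dots,m_{k,r_k})$ are even. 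Thus Theorem~\ref{thm.moduli.Sn} applies. Part (a) of that theorem gives $h(\mathfrak{S}_n,\tau_1,\tau_2) = \Omega(n^{r_1+r_2}) = \Omega(n^6)$, and part (b) gives $h(\mathfrak{S}_n,\tau_1,\tau_2) = \Omega\bigl((\log(\chi))^{r_1+r_2-\epsilon}\bigr) = \Omega\bigl((\log(\chi))^{6-\epsilon}\bigr)$ for some $0 < \epsilon \in \mathbb{R}$, as claimed.
\end{proof}
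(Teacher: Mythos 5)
Your proof is correct and follows exactly the route the paper intends: the corollary is stated immediately after the remark ``We can specialize the results above to Beauville surfaces,'' so the paper's (implicit) proof is precisely the specialization of Theorem~\ref{thm.moduli.Sn} to $r_1=r_2=3$, with the hyperbolicity condition $\frac{1}{r}+\frac{1}{s}+\frac{1}{t}<1$ translating to $\sum_{i=1}^{3}(1-1/m_{k,i})>2$ as you verify.
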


The situation is more interesting for abelian groups. We have the following results which assure the existence of ramification structure for abelian group.

\begin{theo}\label{thm.unmixed.abelian}
Let $G$ be an abelian group, given as
\[
    G \cong \mathbb{Z}/{n_1}\mathbb{Z} \times \dots \times
    \mathbb{Z}/{n_t}\mathbb{Z},
\]
where $n_1 \mid \dots \mid n_t$. For a prime $p$, denote by $l_i(p)$
the largest power of $p$ which divides $n_i$ (for $1\leq i \leq t$).

Let $r_1,r_2 \geq 3$, then $G$ admits an unmixed ramification
structure of size $(r_1,r_2)$ if and only if the following
conditions hold:

\begin{itemize}
\item $r_1,r_2 \geq t+1$;
\item $n_t=n_{t-1}$;
\item If $l_{t-1}(3)>l_{t-2}(3)$ then $r_1,r_2\geq 4$;
\item $l_{t-1}(2)=l_{t-2}(2)$;
\item If $l_{t-2}(2)>l_{t-3}(2)$ then $r_1,r_2\geq 5$ and $r_1,r_2$
are not both odd.
\end{itemize}

\end{theo}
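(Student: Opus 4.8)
The plan is to reduce the statement to a prime-by-prime analysis of the Sylow subgroups of $G$ and then settle each prime separately. Throughout we work with $g'_1=g'_2=0$, so (writing $G$ additively) a system of generators of size $r$ is an $r$-tuple $(g_1,\dots,g_r)$ of nonzero elements with $g_1+\dots+g_r=0$ generating $G$; since $G$ is abelian $\Sigma(\mathcal V)=\bigcup_{k=1}^{r}\langle g_k\rangle$, and condition~\eqref{eq.Rim.Hur.Condition} never enters, being automatic in the $C_i/G\cong\mathbb P^1$ situation by \cite[Lemma 2.4]{GP}. Write $G=\prod_p G_p$ for the primary decomposition and, for $a\in G$, let $a^{(p)}$ be its $p$-component; then $\langle a\rangle_p=\langle a^{(p)}\rangle$ and $\langle a\rangle\cap\langle b\rangle=\prod_p\bigl(\langle a^{(p)}\rangle\cap\langle b^{(p)}\rangle\bigr)$. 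Hence projecting a disjoint pair of systems of generators of size $(r_1,r_2)$ for $G$ onto each $G_p$ gives such a pair for every $G_p$, and conversely these reassemble coordinatewise; the only point to watch is that no reassembled element becomes trivial, which the hypothesis $r_i\ge t+1$ (that is, $r_i>d_p$ for every prime $p$, with $d_p$ the $p$-rank of $G$) guarantees by leaving spare coordinates at each prime. Since $t=\max_p d_p$, the condition $n_t=n_{t-1}$ is exactly "$l_t(p)=l_{t-1}(p)$ for all $p$", and the clauses involving $l_i(3)$ and $l_i(2)$ only constrain $G_3$ and $G_2$. So the theorem reduces to the corresponding statement for a single abelian $p$-group $\mathbb Z/p^{a_1}\times\dots\times\mathbb Z/p^{a_d}$, $a_1\le\dots\le a_d$.

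For necessity: $r_i\ge d+1$ holds because a generating $d$-tuple of a $p$-group reduces mod $p$ to a basis of $G_p/pG_p\cong\mathbb F_p^{\,d}$, and a basis cannot sum to zero. The equality $a_d=a_{d-1}$ is forced since any generating tuple contains an element $x$ of maximal order $p^{a_d}$, and if $a_{d-1}<a_d$ then $p^{a_d-1}G_p\cong\mathbb Z/p$, so $p^{a_d-1}x$ is its unique nonzero element and lies in both $\Sigma(\mathcal V_1)$ and $\Sigma(\mathcal V_2)$, against disjointness. The finer clauses rest on the fact that in a homocyclic $p$-group every nonzero cyclic subgroup contains a unique subgroup of order $p$ (its socle), so two families of cyclic subgroups are disjoint iff the associated families of "lines" in the relevant elementary abelian quotient are; moreover, the max-order elements of a generating tuple of $G_p$ have socles forced into $p^{a_d-1}G_p$ and, being a spanning set there after reduction mod $p$, occupy at least $\dim_{\mathbb F_p}(p^{a_d-1}G_p)$ distinct lines. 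For $p=3$ this gives $a_{d-1}=a_{d-2}$ as above (one level down) and then, since a generating triple of $(\mathbb Z/3^a)^2$ has three pairwise distinct socle-lines while only $4$ lines exist, two disjoint triples cannot coexist, whence $r_i\ge4$. For $p=2$ the same spanning argument forces $a_{d-1}=a_{d-2}$ (so the top block is $(\mathbb Z/2^a)^3$, with seven lines), and a more delicate count — tracking how the element orders interact with the socle and reducing to a line-count in $\mathbb F_2^3$ that distinguishes generating tuples by the parity of their size — shows two disjoint families fit among seven lines only if $r_1,r_2\ge5$ and are not both odd.

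For sufficiency I would construct the pair explicitly. Write $G_p=B\times(\mathbb Z/p^{a})^{e}$, splitting off the top homocyclic block with $e=2,2,3$ according as $p\ge5$, $p=3$, $p=2$, and absorbing the prescribed extra room into $r_1,r_2$. On the top block take classical line-disjoint generating systems: for $p\ge5$ the triple $\{(1,0),(0,1),(-1,-1)\}$ together with a second triple supported on three of the remaining $\ge4$ lines; for $p=3$ the size-$4$ systems $\{(1,0),(-1,0),(0,1),(0,-1)\}$ and $\{(1,1),(-1,-1),(1,-1),(-1,1)\}$ on two-plus-two disjoint socle-lines; for $p=2$ suitably chosen size-$5$ and even-size generating families of $(\mathbb Z/2^a)^3$ with disjoint socle-lines meeting the parity requirement. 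Then extend over $B$ by filling the remaining slots with generators of $B$, merged into existing slots so as to keep the total sum zero without creating new socle-lines (possible because, for $a_{d-2}<a$, a $B$-component does not change the socle of a maximal-order element), the socle correspondence then showing disjointness persists. Finally reassemble over all primes by the Chinese Remainder Theorem, distributing the no-trivial-coordinate requirement. I expect the prime $2$ to be the main obstacle: independent order-$2$ subgroups are scarcest there, the parity condition "$r_1,r_2$ not both odd" is genuine, and building — and verifying the disjointness of — the size-$5$ generating families of $(\mathbb Z/2^a)^3$, then checking this survives gluing on $B$ and combining with the other Sylow subgroups, is where most of the work lies.
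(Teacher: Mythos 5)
First, a caveat: the paper does not prove Theorem \ref{thm.unmixed.abelian} at all --- it only cites \cite[Section 3.4]{GP} --- so your proposal can only be compared with the strategy of that reference. Your overall plan does match it: primary decomposition and reassembly by CRT, the observation that for abelian $G$ one has $\Sigma(\mathcal V)=\bigcup_k\langle g_k\rangle$ and that two nontrivial cyclic subgroups of a homocyclic $p$-group intersect trivially if and only if their order-$p$ socles differ, reduction of each bullet to a count of lines in the socle of the top homocyclic block, and explicit constructions for sufficiency. Your necessity arguments for $r_i\ge t+1$ and for $n_t=n_{t-1}$ are essentially correct (modulo the slip that $p^{a_d-1}G_p\cong\mathbb Z/p$ has $p-1$ nonzero elements for $p>2$; what your argument actually gives, and what you need, is that $\langle x\rangle$ and $\langle y\rangle$ both \emph{contain} that whole order-$p$ subgroup). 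For $p=3$ note that you only rule out $r_1=r_2=3$, whereas the theorem asserts both $r_i\ge4$; the fix is easy (a generating triple occupies $3$ of the $4$ lines, while any other disjoint system must occupy at least $2$), but it should be said.

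The genuine gap is that the $p=2$ clause --- which is exactly where the theorem has nontrivial content --- is asserted rather than proved, in both directions. For necessity you write that ``a more delicate count \dots shows'' that $r_1,r_2\ge5$ and not both odd; but this count is the heart of the matter and is not routine: with a rank-$3$ top block there are exactly $7$ socle lines, a sum-zero generating $4$-tuple occupies exactly $4$ of them and the disjoint partner at least $3$, so $4+3=7$ is \emph{not} an immediate contradiction --- one must further observe that the complement of such a $4$-set of lines is a dependent triple (since all seven nonzero vectors of $\mathbb{F}_2^3$ sum to zero), and the exclusion of ``both odd'' requires a separate parity analysis of which multisets of socle lines can be the max-order part of a sum-zero generating tuple. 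None of this is carried out. For sufficiency, the ``suitably chosen size-$5$ and even-size generating families of $(\mathbb Z/2^a)^3$ with disjoint socle-lines'' are never exhibited, and you yourself flag this as ``where most of the work lies''. A second, independent gap: your construction splits $G_p=B\times(\mathbb Z/p^a)^e$ with $e=2,2,3$ and merges generators of $B$ into existing slots ``without creating new socle-lines'', justified by $a_{d-2}<a$ (resp.\ $a_{d-3}<a$); but that hypothesis is not part of the theorem --- the top homocyclic block may have rank larger than $e$, in which case $B$ contains further $\mathbb Z/p^a$ factors whose generators \emph{do} change the socle of the slot they are merged into, and your base systems on the rank-$e$ block no longer generate $G_p$. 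The construction has to be carried out on the full top homocyclic block (or by induction on the number of cyclic factors, which is how the cited proof proceeds). As it stands, the proposal is a correct road map with the two hardest stretches left unbuilt.
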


This theorem is proved in \cite[Section 3.4]{GP}. Once we
established the existence of ramification structures, we can put
the surfaces in sequences and compute the  number of connected
components of the  moduli space. More precisely, the following holds.

\begin{theo}\label{theo.poly.large}
Let $\{S_p\}$ be the family of surfaces isogenous to a product
with $q=0$ with group $G_p:=(\mathbb{Z}/p \mathbb{Z})^r$ admitting
ramification structure of type $\tau_p = (p,\dots, p)$ ($p$
appears $(r+1)-$times) where $p$ is prime. If we denote by
$h(G_p;\tau_p,\tau_p)$ the number of connected components of the
moduli space of isomorphism classes of surfaces isogenous to a
product with $q=0$ admitting these data, then
\[
h(G_p;\tau_p,\tau_p)=\Theta(\chi^{r}(S_p)).
\]
 \end{theo}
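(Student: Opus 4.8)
The plan is to count orbits of the set $\mathcal{U}(G_p;\tau_p,\tau_p)$ under the action of $(\mathbf{B}_{r+1} \wr \mathbb{Z}/2\mathbb{Z}) \times \Aut(G_p)$, as prescribed after Lemma~\ref{lem.Vol}, and show that this count is bounded both above and below by a constant times $\chi^r(S_p)$. The upper bound $h(G_p;\tau_p,\tau_p) = O(\chi^r(S_p))$ is already essentially Proposition~\ref{prop.asy.comp}(ii): here $r_1 = r_2 = r+1$, so $r_1 + r_2 - 2 = 2r$, giving $h = O(\chi^{2r})$ --- but this is too weak, so the first real task is to sharpen the upper bound to exponent $r$. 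The point is that for $G_p = (\mathbb{Z}/p\mathbb{Z})^r$ abelian, the braid action has a very restricted effect: since $G_p$ is abelian, conjugation is trivial, so $\sigma_i$ merely permutes $\gamma_i \leftrightarrow \gamma_{i+1}$ (as $\gamma_{i+1}^{-1}\gamma_i\gamma_{i+1} = \gamma_i$ in an abelian group), i.e. the braid group acts through the symmetric group $\mathfrak{S}_{r+1}$ on the $(r+1)$-tuple. So a system of generators of type $(p,\dots,p)$ is, up to the braid action, just an unordered $(r+1)$-tuple of nonzero vectors in $\mathbb{F}_p^r$ summing to zero and spanning $\mathbb{F}_p^r$; such a tuple is determined by the first $r$ entries (the last is forced), and spanning forces those to be close to a basis, so there are $\Theta(p^{r^2})$ of them, hence (dividing only by $|\mathfrak{S}_{r+1}|$, a constant, and by $|\Aut(G_p)| = |\GL_r(\mathbb{F}_p)| = \Theta(p^{r^2})$ acting on each factor) one expects on the order of a constant number per $\Aut$-orbit in each factor, and then $\Theta(1)$ many after the diagonal $\Aut$-action relating $\mathcal{V}_1$ to $\mathcal{V}_2$ --- wait, this would give $\Theta(1)$, not $\Theta(\chi^r)$. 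The resolution is that $\chi(S_p) = \Theta(|G_p|) = \Theta(p^r)$ by Proposition~\ref{prop.asy.comp}(i), so $\chi^r = \Theta(p^{r^2})$, and one must be careful: the diagonal (rather than independent) action of $\Aut(G_p)$ on the \emph{pair} $(\mathcal{V}_1,\mathcal{V}_2)$ is what prevents full cancellation. Concretely, fixing $\mathcal{V}_1$ up to the braid action uses up the $\GL_r(\mathbb{F}_p)$-freedom on the first factor, but then $\mathcal{V}_2$ ranges over $\sim p^{r^2}$ possibilities modulo only the braid action and the stabilizer of (the braid-class of) $\mathcal{V}_1$ in $\GL_r(\mathbb{F}_p)$, which is finite for a generic spanning tuple. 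This yields $h(G_p;\tau_p,\tau_p) = \Theta(p^{r^2}) = \Theta(\chi^r(S_p))$.

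In detail, here is the order of the steps. First I would record that for $G_p$ abelian the braid action on each factor reduces to the $\mathfrak{S}_{r+1}$-permutation action, so $\mathcal{U}(G_p;\tau_p,\tau_p)$ modulo braids is in bijection with pairs of unordered $(r+1)$-subsets-with-multiplicity $\{v_1,\dots,v_{r+1}\}$, $\{w_1,\dots,w_{r+1}\}$ of $\mathbb{F}_p^r \setminus \{0\}$, each summing to $0$, each spanning $\mathbb{F}_p^r$, and subject to the disjointness condition~\eqref{eq.sigmasetcond}, which for $G_p$ of prime exponent says the cyclic subgroups $\langle v_i \rangle$ and $\langle w_j \rangle$ meet only in $0$, i.e. no $v_i$ is a scalar multiple of any $w_j$. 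Second, I would count such tuples: a spanning $(r+1)$-multiset summing to zero is obtained by choosing an ordered spanning $(r+1)$-tuple with sum zero --- there are $(\#\{\text{ordered spanning }r\text{-tuples}\}) = \prod_{i=0}^{r-1}(p^r - p^i) = \Theta(p^{r^2})$ choices of the first $r$ entries with the last determined, minus a lower-order correction for when the forced last entry is zero or makes the tuple non-spanning --- and dividing by the constant $(r+1)!$. So $|\mathcal{U}/\mathbf{B}| = \Theta(p^{2r^2})$ before imposing disjointness, and disjointness removes only an $O(p^{2r^2 - 1})$-sized subset (for each of the $\binom{r+1}{2}^2$ coincidence conditions $v_i \parallel w_j$, the locus has codimension $\geq 1$), so $|\mathcal{U}/\mathbf{B}| = \Theta(p^{2r^2})$ still. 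Third, I would bring in the $\Aut(G_p) = \GL_r(\mathbb{F}_p)$-action: $|\GL_r(\mathbb{F}_p)| = \Theta(p^{r^2})$, and since the $\GL_r$-action on \emph{ordered} spanning $r$-frames is free, the stabilizer in $\GL_r$ of a generic braid-class of $\mathcal{V}_1$ is trivial (a spanning $(r+1)$-multiset with no repeated directions and no nontrivial symmetry has trivial $\GL_r$-stabilizer, and the non-generic locus is lower-dimensional). Hence the number of $\GL_r$-orbits of pairs is $\Theta(p^{2r^2}/p^{r^2}) = \Theta(p^{r^2})$. Fourth, combining with $\chi(S_p) = \frac{|G_p|}{4}(r-1)^2 \cdot \frac{1}{?}$ --- more simply, from Proposition~\ref{prop.asy.comp}(i), $\chi(S_p) = \Theta(|G_p|) = \Theta(p^r)$, so $p^{r^2} = \Theta(\chi^r(S_p))$, which gives both bounds.

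The main obstacle, and where I would spend the most care, is the lower bound rather than the upper bound: I must show that the $\Theta(p^{2r^2})$ braid-classes of pairs do not all collapse under $\GL_r(\mathbb{F}_p)$ into $o(p^{r^2})$ orbits, i.e. that a positive proportion of braid-classes of $\mathcal{V}_1$ have trivial (or at least uniformly bounded) $\GL_r$-stabilizer, and that for each such fixed $\mathcal{V}_1$ the companion $\mathcal{V}_2$ still ranges over $\Theta(p^{r^2})$ distinct orbit representatives. Equivalently, I need a lower bound on the number of orbits, which requires exhibiting many provably-inequivalent pairs; the clean way is the orbit-counting (Burnside) estimate $\#\text{orbits} \geq |\mathcal{U}/\mathbf{B}| / |\GL_r(\mathbb{F}_p)|$, which is exactly $\Theta(p^{r^2})$, so in fact the lower bound is automatic once the cardinality counts in Steps~2--3 are nailed down --- the genuinely delicate points are therefore (a) verifying that imposing the spanning condition and the disjointness condition~\eqref{eq.sigmasetcond} only changes the count by lower-order terms, and (b) checking that the wreath-product factor $\mathbb{Z}/2\mathbb{Z}$ (swapping $\mathcal{V}_1 \leftrightarrow \mathcal{V}_2$, relevant since $\tau_1 = \tau_2$ here) and the constant $(r+1)!$ and $|\mathbb{Z}/2\mathbb{Z}|$ factors are all $O(1)$ and hence do not affect the $\Theta$. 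Once those bookkeeping points are dispatched, the equality $h(G_p;\tau_p,\tau_p) = \Theta(\chi^r(S_p))$ follows.
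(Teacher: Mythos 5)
Your proposal is correct and follows essentially the same strategy as the paper: the general proof is deferred to \cite[Section 2]{GP}, but the in-text proof of the $r=2$ case (Corollary \ref{cor.poly.beau}) proceeds exactly as you do --- the braid action collapses to the symmetric-group permutation action because $G$ is abelian, $\mathcal{V}_1$ is normalized to the standard basis using $\Aut(G_p)\cong\GL_r(\mathbb{F}_p)$, and the companion system $\mathcal{V}_2$ then contributes $\Theta(p^{r^2})$ choices modulo a residual group of bounded order. Your bookkeeping for general $r$ (the spanning $(r+1)$-tuple containing a basis, the Burnside-type lower bound $\#\mathrm{orbits}\geq|\mathcal{U}/\mathbf{B}|/|\GL_r(\mathbb{F}_p)|$, and the observation that the disjointness condition \eqref{eq.sigmasetcond} only excises a positive-codimension locus) is sound and yields $h=\Theta(p^{r^2})=\Theta(\chi^r(S_p))$ as required.
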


Therefore, there exist families of surfaces such that the degree
of the polynomial $h$ in $\chi$ (and so in $K^2$) can be
arbitrarily large. The proof of this theorem appears in
\cite[Section 2]{GP}. Notice that not only the number of connected
components increases, but also their sizes. Indeed, we see that
$2r-6$ is the dimension of these connected components.

Again we can specialize the results for Beauville surfaces. 

\begin{cor}\label{cor.poly.beau}
Let $\{S_p\}$ be the family of Beauville surfaces
$G_p:=(\mathbb{Z}/p \mathbb{Z})^2$ admitting ramification structure of type $\tau_p =
(p,p,p)$ where $p \geq 5$ is prime. If we denote by $h(G_p;\tau_p,\tau_p)$ the number of Beauville surfaces admitting these data. Then
\[
h(G_p;\tau_p,\tau_p)=\Theta(\chi^{2}(S_p)).
\]
\end{cor}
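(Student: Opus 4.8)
The plan is to deduce Corollary~\ref{cor.poly.beau} from Theorem~\ref{theo.poly.large} by specializing the parameter $r=2$, after checking that the hypotheses match up. First I would verify that for $r=2$ and $p\geq 5$ prime the group $G_p=(\mathbb{Z}/p\mathbb{Z})^2$ genuinely admits an unmixed ramification structure of type $\tau_p=(p,p,p)$: this is exactly the case $r_1=r_2=3$ of Theorem~\ref{thm.unmixed.abelian} with $t=2$, $n_1=n_2=p$, so one checks $r_1,r_2\geq t+1=3$ (equality, allowed), $n_t=n_{t-1}$ (trivially $p=p$), the condition $l_{t-1}(2)=l_{t-2}(2)$ (both zero since $p$ is odd), and the $l(3)$ and $l(2)$ refinements are vacuous because $p\neq 2,3$. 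Hence the family $\{S_p\}$ is nonempty and each $S_p$ is a surface isogenous to a product with $q=0$ whose two coverings $C_i\to C_i/G_p\cong\mathbb{P}^1$ are branched over exactly $3$ points; by the discussion after~\eqref{eq.RHtre} these are precisely Beauville surfaces, and $h(G_p;\tau_p,\tau_p)$ counts them in the sense of Theorem~\ref{thm.moduli.An}'s framework (orbits under $(\mathbf{B}_3\wr\ZZ/2\ZZ)\times\Aut(G_p)$).

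Next I would simply invoke Theorem~\ref{theo.poly.large} with $r=2$: it gives directly $h(G_p;\tau_p,\tau_p)=\Theta(\chi^{r}(S_p))=\Theta(\chi^{2}(S_p))$, which is the assertion of the Corollary. The only genuine point to address is that the notion of ``number of connected components'' in Theorem~\ref{theo.poly.large} and the notion of ``number of Beauville surfaces'' in the Corollary agree in the Beauville ($r_i=3$) case; this is guaranteed by the Weak Rigidity Theorem~\cite{cat04} together with the rigidity of Beauville surfaces (they have no nontrivial deformations, so each connected component is a point), so counting components is the same as counting isomorphism classes. One should also note that $\chi(S_p)\to\infty$ as $p\to\infty$ since by~\eqref{eq.pginfty} we have $4\chi(S_p)=|G_p|(1-3/p)^2=p^2(1-3/p)^2=(p-3)^2$, so the asymptotic statement is meaningful.

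The ``hard part'' is therefore not in this Corollary at all — it was already done in Theorem~\ref{theo.poly.large}, whose proof (referenced to \cite[Section 2]{GP}) must establish both the upper bound $h\leq|G_p|^{r}$ of Proposition~\ref{prop.asy.comp}-type shape and, crucially, a matching lower bound $h=\Omega(\chi^{r})$; the latter requires producing $\Omega(p^{2r})$ many $\Aut(G_p)\times(\mathbf{B}_{r+1}\wr\ZZ/2)$-inequivalent pairs of disjoint generating systems of $(\mathbb{Z}/p\mathbb{Z})^r$ of type $(p,\dots,p)$, which is the substantive combinatorial/linear-algebra computation over $\mathbb{F}_p$. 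For the present Corollary, once Theorem~\ref{theo.poly.large} is granted, the argument is a one-line specialization plus the two bookkeeping remarks above (nonemptiness via Theorem~\ref{thm.unmixed.abelian}, and components $=$ surfaces via rigidity).
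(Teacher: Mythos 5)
Your argument is logically sound, but it takes a genuinely different route from the paper. You deduce the Corollary as the $r=2$ specialization of Theorem~\ref{theo.poly.large}, checking nonemptiness via Theorem~\ref{thm.unmixed.abelian} and identifying components with isomorphism classes via rigidity; all of these bookkeeping steps are correct (in particular your computation $4\chi(S_p)=p^2(1-3/p)^2=(p-3)^2$ matches the paper's normalization). The paper, by contrast, gives a self-contained explicit count that does not invoke Theorem~\ref{theo.poly.large} at all: using $\Aut(G_p)\cong\GL_2(\mathbb{F}_p)$ it normalizes the first generating pair to the standard basis, writes $y_1=(a,b)$, $y_2=(c,d)$, translates the disjointness condition~\eqref{eq.sigmasetcond} into six unit conditions on $a,b,c,d$, counts $N_p=(p-1)(p-2)(p-3)(p-4)$ solutions, and then shows that the braid action reduces to $\mathfrak{S}_3\times\mathfrak{S}_3$ (since $G_p$ is abelian) and, together with the factor exchange, produces orbits of size between $6$ and $72$, yielding $N_p/72\le h\le N_p/6$. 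What each approach buys: yours is a one-line reduction whose entire content lives in the external reference \cite[Section 2]{GP} (which you correctly flag as the ``hard part''), and it yields only the bare asymptotic $\Theta(\chi^2)$; the paper's computation yields explicit constants, is independent of the unproved-here Theorem~\ref{theo.poly.large}, and serves as the template for the generalization to $(\mathbb{Z}/n\mathbb{Z})^2$ in Corollary~\ref{cor.ZnZ.n}. If you intend your proof to stand inside this paper, you should be aware that you are importing the lower-bound construction from \cite{GP} rather than exhibiting it, which is precisely what the paper's explicit proof avoids.
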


\proof Let $(x_1,x_2;y_1,y_2)$ be an unmixed Beauville structure
for $G$. Since $x_1, x_2$ are generators of $G$, they are a basis,
and without loss of generality $x_1, x_2$ are the standard basis
$x_1=(1,0)$, $x_2=(0,1)$. Now, let $y_1=(a,b)$, $y_2=(c,d)$, then
the condition \eqref{eq.sigmasetcond} means that any pair of the
six vectors yield a basis of $G$, implying that $a,b,c,d$ must
satisfy the following conditions
\begin{equation}\label{eq.sol.Zp}
    a-b, a+c, c-d, b+d, a+c-b-d, ad-bc \in U
\end{equation}

Moreover, the number $N_p$ of quadruples $(a,b,c,d)$
satisfy~\eqref{eq.sol.Zp} is $N_p=(p-1)(p-2)(p-3)(p-4)$. The pairs
$\bigl((1,0),(0,1);(a,b),(c,d)\bigr)$, where $a,b,c,d$
satisfy~\eqref{eq.sol.Zp}, are exactly the representatives for the
$\Aut(G)-$orbits in the set $\mathcal{U}(G;\tau,\tau)$.

Now, one should consider the action of ${\bf B}_3 \times {\bf B}_3$ on
$\mathcal{U}(G;\tau,\tau)$, which is equivalent to the action of
$\mathfrak{S}_3 \times \mathfrak{S}_3$, since $G$ is abelian. The action of $\mathfrak{S}_3$ on the
second component is obvious (there are $6$ permutations), and the
action of $\mathfrak{S}_3$ on the first component can be translated to an
equivalent $\Aut(G)-$action, given by multiplication in one of the
six matrices:
\[
\left(%
\begin{array}{cc}
  1 & 0 \\
  0 & 1 \\
\end{array}%
\right),
\left(%
\begin{array}{cc}
  0 & 1 \\
  1 & 0 \\
\end{array}%
\right),
\left(%
\begin{array}{cc}
  -1 & 0 \\
  -1 & 1 \\
\end{array}%
\right),
\left(%
\begin{array}{cc}
  1 & -1 \\
  0 & -1 \\
\end{array}%
\right),
\left(%
\begin{array}{cc}
  -1 & 1 \\
  -1 & 0 \\
\end{array}%
\right),
\left(%
\begin{array}{cc}
  0 & -1 \\
  1 & -1 \\
\end{array}%
\right),
\]
yielding an equivalent representative.

Therefore, the action of $\mathfrak{S}_3$ on the second component yields
orbits of length $6$, and the action of $\mathfrak{S}_3$ on the first
component connects them together, and gives orbits of sizes from
$6$ to $36$. Moreover since one can exchange the vector $(x_1,x_2)$ with the vector $(y_1,y_2)$ we get \[
N_p/72 \leq h \leq N_p/6.
\]
By Proposition
\ref{prop.asy.comp}, we have as $p \rightarrow \infty$:
 \[ \chi(S_p) = \Theta(p^2),
 \]
 while by the above computation we have
 \[ h(G_p;\tau_p,\tau_p)=\Theta(p^{4}).
 \]
Therefore
\[
h(G_p;\tau_p,\tau_p)=\Theta(\chi^{2}(S_p)).
\]
\endproof
 \begin{cor}\label{cor.ZnZ.n}
Let $n$ be an integer such that $(n,6)=1$. The number $h=h(G;\tau,\tau)$,
where $\tau=(n,n,n)$, of Hurwitz components for
$G=(\mathbb{Z}/n\mathbb{Z})^2$, where $n=p_1^{k_1}\cdot\ldots\cdot
p_t^{k_t}$, satisfies
\[
N_n/72 \leq h \leq N_n/6,
\]
where $N_n=\prod_{i=1}^t p_i^{4k_i-4}(p_i-1)(p_i-2)(p_i-3)(p_i-4)$.
\end{cor}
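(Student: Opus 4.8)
The plan is to reduce the case of $G=(\mathbb{Z}/n\mathbb{Z})^2$ with $n$ coprime to $6$ to the prime case already handled in Corollary~\ref{cor.poly.beau}, using the Chinese Remainder Theorem. First I would write $n = p_1^{k_1}\cdots p_t^{k_t}$ and observe that $G = (\mathbb{Z}/n\mathbb{Z})^2 \cong \prod_{i=1}^t (\mathbb{Z}/p_i^{k_i}\mathbb{Z})^2$, so that a system of generators of type $\tau=(n,n,n)$ decomposes componentwise. The key point is that an element of $(\mathbb{Z}/n\mathbb{Z})^2$ has order exactly $n$ if and only if its image in each factor $(\mathbb{Z}/p_i^{k_i}\mathbb{Z})^2$ has order exactly $p_i^{k_i}$, and the $\Sigma$-set disjointness condition \eqref{eq.sigmasetcond} likewise holds if and only if it holds in each $p_i$-primary factor (since the cyclic subgroups generated by the various $\theta_j(\gamma_k)$ decompose as products of their $p_i$-parts, and two such products meet trivially iff each pair of $p_i$-parts does). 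Hence $\mathcal{U}(G;\tau,\tau)$, and the whole equivalence under $\mathbf{B}_3 \times \mathbf{B}_3 \times \Aut(G)$ together with the exchange of the two systems, splits as a product over $i$ of the corresponding data for $(\mathbb{Z}/p_i^{k_i}\mathbb{Z})^2$.

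Next I would count, for each prime power $p^k$ with $p\geq 5$, the analogue of $N_p$. As in the proof of Corollary~\ref{cor.poly.beau}, after using $\Aut((\mathbb{Z}/p^k\mathbb{Z})^2)$ to normalize $x_1=(1,0)$, $x_2=(0,1)$, the disjointness condition becomes the requirement that each of the six quantities in \eqref{eq.sol.Zp} — namely $a-b$, $a+c$, $c-d$, $b+d$, $a+c-b-d$, $ad-bc$ — be a unit in $\mathbb{Z}/p^k\mathbb{Z}$, i.e. not divisible by $p$. Reducing mod $p$, this says $(a,b,c,d) \bmod p$ satisfies the $\mathbb{F}_p$-version, so the number of admissible quadruples over $\mathbb{Z}/p^k\mathbb{Z}$ is $p^{4(k-1)}\cdot(p-1)(p-2)(p-3)(p-4)$; multiplying over $i$ gives $N_n = \prod_{i=1}^t p_i^{4k_i-4}(p_i-1)(p_i-2)(p_i-3)(p_i-4)$. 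The condition $(n,6)=1$ guarantees every $p_i \geq 5$ so that all four residue classes $0,1,2,3,4$ are distinct mod $p_i$ and the count is a product of genuinely positive factors (and the structure exists). I would then note that the $\mathbf{B}_3 \times \mathbf{B}_3 \cong \mathfrak{S}_3 \times \mathfrak{S}_3$ action (since $G$ is abelian, each $\mathbf{B}_3$ acts through $\mathfrak{S}_3$) together with the $\mathbb{Z}/2\mathbb{Z}$ exchange gives orbits on the set of $\Aut(G)$-normalized representatives of size between $6$ and $72$: exactly as in Corollary~\ref{cor.poly.beau}, the $\mathfrak{S}_3$ on the $(y_1,y_2)$ component has orbits of length $6$, the $\mathfrak{S}_3$ on the $(x_1,x_2)$ component translates (via the six explicit matrices) into a further $\Aut(G)$-identification glueing up to six of these together, and the exchange of $(x_1,x_2)$ with $(y_1,y_2)$ at most doubles again, so $N_n/72 \leq h \leq N_n/6$.

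The one subtlety I expect to need care with is the multiplicativity of the orbit-counting itself: it is not automatic that $h(G;\tau,\tau) = \prod_i h\bigl((\mathbb{Z}/p_i^{k_i}\mathbb{Z})^2;\tau_i,\tau_i\bigr)$, because an orbit in the product need not be the product of its projected orbits in general. Here, however, I would argue directly at the level of counting admissible quadruples rather than at the level of orbit numbers: the bound $N_n/72 \leq h \leq N_n/6$ follows because the acting group $(\mathbf{B}_3 \times \mathbf{B}_3) \rtimes \mathbb{Z}/2\mathbb{Z}$ has, after absorbing the $\mathfrak{S}_3$ on the first component into $\Aut(G)$, effective orbits on the quadruples of size at most $72$ (from $6$ for $\mathfrak{S}_3$, times $6$ for the matrix identifications, times $2$ for the exchange) and at least $6$ (the free $\mathfrak{S}_3$-action on $(y_1,y_2)$), uniformly and independently of $n$ — exactly the same argument as in Corollary~\ref{cor.poly.beau}, now applied with $N_n$ in place of $N_p$. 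Thus the hard part is really just verifying that the CRT decomposition is compatible with the order conditions and with \eqref{eq.sigmasetcond}, and that the $\Aut(G)$-normalization and the six-matrix trick of Corollary~\ref{cor.poly.beau} go through verbatim over $\mathbb{Z}/n\mathbb{Z}$; once that is in place the displayed inequalities are immediate.
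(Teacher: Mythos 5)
The paper states this corollary without proof (deferring to \cite{GP}), so the only available comparison is with its proof of the prime case, Corollary~\ref{cor.poly.beau}; your argument is precisely the intended extension of that proof and is correct. The three points that genuinely need checking --- that the order conditions and the disjointness condition \eqref{eq.sigmasetcond} decompose over the $p_i$-primary factors of $(\mathbb{Z}/n\mathbb{Z})^2$, that each unit condition over $\mathbb{Z}/p^{k}\mathbb{Z}$ is detected modulo $p$ and hence contributes the lifting factor $p^{4(k-1)}$, and that the orbit-size bounds $6\leq|\mathrm{orbit}|\leq 72$ are uniform in $n$ so that no multiplicativity of $h$ itself is required --- are all identified and handled correctly in your write-up.
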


\begin{rem}
Notice that if $n$ is divisible by the first $l$ primes $p_i\geq5$ then since:
\[ \lim_{l \rightarrow \infty} \prod_i(1 - \frac{1}{p_i})=0
\]
we have $N_n/n^4 \rightarrow 0$ as $l \rightarrow \infty$.
\end{rem}
In \cite{GJT} the authors give an explicit formula for the
number of isomorphism classes of Beauville surfaces $\Theta(n)$,
which we now explain. We shall keep the notation of \cite{GJT}.
Define the following functions for $n,e \in \mathbb{N}$ and $p$
prime.
\[ \Theta_1(n):=n^4\prod_{p|n}(1-\frac{1}{p})(1-\frac{2}{p})(1-\frac{3}{p})(1-\frac{4}{p});
\]
\[ \Theta_2(p^e):= \left\{
\begin{array}{rl}
p^{2e}(1-\frac{1}{p})(1-\frac{2}{p}) & if \quad p \equiv 1 \mod 4, \\
p^{2e}(1-\frac{1}{p})(1-\frac{4}{p}) & if \quad p \equiv 3 \mod 4; \\
\end{array}
\right.
\]
\[ \Theta_3(p^e):= p^{2e}(1-\frac{3}{p})(1-\frac{5}{p});
\]
\[ \Theta_4(p^e):= \left\{
\begin{array}{rl}
0 & if \quad p \equiv -1 \mod 3, \\
2 & if \quad p \equiv 1 \mod 3. \\
\end{array}
\right.
\]
\begin{theo}\cite[Theorem 2]{GJT} Let $n=p_1^{e_1}\cdot\ldots\cdot
p_t^{e_t}$ be an integer such that $(n,6)=1$. Then the number of isomorphism classes of Beauville surfaces with group $G=(\mathbb{Z}/n\mathbb{Z})^2$ is
\[ \Theta(n):=\frac{1}{72}\Big(\Theta_1(n)+4\prod^{t}_{i=1}\Theta_2(p_i^{e_i}) + 6\prod^{t}_{i=1}\Theta_3(p_i^{e_i})+12\prod^{t}_{i=1}\Theta_4(p_i^{e_i}) \Big).
\]
\end{theo}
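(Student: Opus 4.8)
The plan is to compute this number as the quantity $h(G;\tau,\tau)$ of Corollary~\ref{cor.ZnZ.n}, with $G=(\ZZ/n\ZZ)^2$ and $\tau=(n,n,n)$, by sharpening the bounds $N_n/72\le h\le N_n/6$ proved there into an exact value, via a Cauchy--Frobenius (Burnside) count carried out one prime at a time.

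First I would make the objects explicit. Since $G$ is abelian, a spherical system of generators of type $(n,n,n)$ is an ordered triple $(x_1,x_2,x_3)$ of elements of order $n$ with $x_1+x_2+x_3=0$ generating $G$; because $|G|=n^2$ this is equivalent to $(x_1,x_2)$ being an ordered basis of $(\ZZ/n\ZZ)^2$, i.e.\ $\det(x_1\mid x_2)\in(\ZZ/n\ZZ)^\ast$ (and then $x_3=-x_1-x_2$ automatically has order $n$). The disjointness condition then says, as recorded in~\eqref{eq.sol.Zp}, that each entry of $\mathcal V_1$ together with each entry of $\mathcal V_2$ is again an ordered basis, i.e.\ has unit determinant. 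Now $\Aut(G)=\GL_2(\ZZ/n\ZZ)$ acts simply transitively on ordered bases, hence on the systems $\mathcal V_1$, so every $\Aut(G)$-orbit on $\mathcal U(G;\tau,\tau)$ has a unique representative with $\mathcal V_1=E:=\bigl((1,0),(0,1),(-1,-1)\bigr)$; the set $X$ of such representatives is exactly the set of admissible second systems $\mathcal V_2=\bigl((a,b),(c,d),(-a-c,-b-d)\bigr)$, whose cardinality is $N_n=\Theta_1(n)$ by~\eqref{eq.sol.Zp} and the Chinese Remainder Theorem applied to $n=\prod_i p_i^{e_i}$.

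Next, I would divide by the residual action of $\mathbf B_3\wr\ZZ/2\ZZ$: it commutes with $\Aut(G)$, hence acts on $X$, and since $G$ is abelian it factors through the order-$72$ quotient $\mathfrak S_3\wr\ZZ/2\ZZ$, where the $\mathfrak S_3$ on $\mathcal V_1$ becomes left multiplication of $B:=\begin{pmatrix} a&c\\ b&d\end{pmatrix}$ by one of the six matrices listed in the proof of Corollary~\ref{cor.poly.beau}, the $\mathfrak S_3$ on $\mathcal V_2$ becomes right multiplication by one of those matrices, and the $\ZZ/2\ZZ$ sends $B$ to $B^{-1}$. Cauchy--Frobenius then gives
\[
h(G;\tau,\tau)=\frac1{72}\sum_{g\in\mathfrak S_3\wr\ZZ/2\ZZ}\bigl|\mathrm{Fix}_X(g)\bigr|,
\]
and since $X$, the action, and the unit-determinant conditions all respect $G=\prod_i(\ZZ/p_i^{e_i}\ZZ)^2$, each term $|\mathrm{Fix}_X(g)|$ is multiplicative over the $p_i^{e_i}$; it suffices to evaluate it over one $\ZZ/p^e\ZZ$. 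The identity gives $\Theta_1(n)$. For a non-identity $g$, being fixed is a matrix equation of the form $M_1 B^{\pm1}M_2=B$, whose solution set one counts against the unit-determinant constraints (a small inclusion--exclusion producing the $(1-k/p)$ factors): the even-order elements cut the four parameters down by half and produce $\Theta_2(p^e)$ and $\Theta_3(p^e)$ — the $p\bmod4$ dichotomy recording whether the needed order-$4$ eigenvalues $\pm\sqrt{-1}$ exist in $\GL_2(\ZZ/p^e\ZZ)$ — while elements of order $3$ or $6$ force $B$ to be invariant under a matrix $R$ with $R^2+R+1=0$, which exists over $\ZZ/p^e\ZZ$ only when $p\equiv1\pmod3$, contributing $\Theta_4(p^e)\in\{0,2\}$; all remaining $g$ have empty fixed set. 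Grouping conjugate elements, the numbers of the three relevant non-identity types come out to $4$, $6$ and $12$, which is the asserted formula.

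The hard part is exactly this last bookkeeping: for each of the $72$ group elements one must determine, after the renormalisation, which equation $M_1 B^{\pm1}M_2=B$ it imposes, count its solutions over $\ZZ/p^e\ZZ$ subject to the six unit-determinant conditions, and decide for which residues of $p$ the relevant symmetry can be realised at all; matching these contributions to the coefficient pattern $1+4+6+12$ rather than some other partition is the delicate combinatorial--arithmetic core. By contrast, the reduction to a Burnside count over $\mathfrak S_3\wr\ZZ/2\ZZ$ and the multiplicativity in the prime powers are formal, given the description of the braid and $\Aut(G)$ actions already established above and in the proof of Corollary~\ref{cor.poly.beau}.
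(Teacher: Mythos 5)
The paper does not actually prove this statement: it is quoted verbatim from [GJT, Theorem 2], so there is no internal proof to compare with. Your reduction is the right one and is essentially the one used in [GJT]: normalise $\mathcal{V}_1$ to the standard basis using the simply transitive action of $\Aut(G)=\GL_2(\ZZ/n\ZZ)$ on ordered bases, identify the $\Aut(G)$-orbits with the set $X$ of admissible matrices $B$ (of cardinality $N_n=\Theta_1(n)$), observe that the residual action of $\mathbf{B}_3\wr\ZZ/2\ZZ$ factors through the order-$72$ group $\mathfrak{S}_3\wr\ZZ/2\ZZ$ acting by $B\mapsto M_\sigma^{-1}B^{\pm1}M_\rho$, and apply Cauchy--Frobenius together with the Chinese Remainder Theorem. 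All of that is correct and goes beyond the mere citation given in the text.

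However, as a proof of the stated formula the proposal has a real gap, which you yourself flag: the entire fixed-point computation is left undone, and that computation \emph{is} the theorem. Concretely, you must (i) list the nine conjugacy classes of $\mathfrak{S}_3\wr\ZZ/2\ZZ$ (sizes $1,6,4,9,12,4,6,18,12$); (ii) prove that the base-group classes whose two components have different cycle type (sizes $6,4,12$) and, much less trivially, the classes of sizes $9$ and $18$ have \emph{empty} fixed sets in $X$ --- for the latter two this is not linear algebra alone, since the matrix equations $M_1B^{\pm1}M_2=B$ do have solutions in $\GL_2(\ZZ/n\ZZ)$, and one must use the disjointness inequalities cutting out $X$; (iii) compute $|\mathrm{Fix}(g)|$ over $\ZZ/p^e\ZZ$, not just over $\mathbb{F}_p$, for the three surviving classes (sizes $4$, $6$, $12$, matching the coefficients in the formula), which requires a lifting argument because these fixed sets are cosets of centralizers intersected with $X$, not open conditions; and (iv) match those classes to $\Theta_2,\Theta_3,\Theta_4$ together with their congruence dichotomies. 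Your attribution of the $p\bmod 4$ dichotomy to order-$4$ eigenvalues and of $\Theta_4$ to matrices with $R^2+R+1=0$ is plausible but unverified, and the matching is genuinely delicate: a direct check of the formula as printed here gives the correct value $1$ for $n=5$, but a non-integer for $n=11$, namely $\frac{1}{72}(5040+4\cdot 70+6\cdot 48)$, so either the survey's transcription of the $\Theta_i$ contains a typo or the naive class-to-$\Theta_i$ assignment fails; in either case the bookkeeping cannot be waved away. Until steps (ii)--(iv) are carried out, what you have established is only the framework plus the bounds $N_n/72\le h\le N_n/6$ already recorded in Corollary \ref{cor.ZnZ.n}.
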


The case of \emph{irregular} surfaces has not been so intensively investigated.
Indeed, counting  the number of connected components of the moduli
space of surfaces isogenous to a product with fixed data can be
more complicated. We have to consider Theorem
\ref{Fabmain} together with the Hurwitz moves described in
Proposition \ref{prop_Hurwitzmoves}. Indeed, this procedure was applied in
two specific cases: For surfaces isogenous to a product with
$p_g=q=1$ or $2$ in \cite{pol1} and in \cite{P11}. In both cases it was used a \verb|GAP4| script which can be found in \cite{P10}. 
It would be interesting to consider all the surfaces isogenous to a product with fixed invariants $K^2$ and $\chi$ and count the number of connected components of the moduli space of surfaces of general type they give. Indeed for $K^2=8$ and $\chi=1$ we can count already 94 connected componens.  Respectively: 1 component with $p_g=q=4$, 1 with $p_g=q=3$, 27 with $p_g=q=2$, 52 with $p_g=q=1$ and 13 with $p_g=q=0$.

In this paper we consider only \textit{unmixed} surfaces isogenous to a product, nevertheless similar methods could be applied also for the \textit{mixed} case. Indeed, there are already works in these direction, see \cite{FP13}.

%

\bigskip
\bigskip


\end{document}
